\documentclass[11pt]{amsart}
\usepackage[babel]{csquotes}
\usepackage{esint}
\usepackage{enumitem}
\usepackage{amsmath,amsthm,amssymb,mathrsfs,amsfonts,verbatim,enumitem,color,leftidx,mathtools}
\usepackage{mathabx}
\usepackage{etoolbox} 
\usepackage{bbm}
\usepackage[all,tips]{xy}
\usepackage{graphicx,ifpdf}
\usepackage{stmaryrd}
\usepackage{amssymb}
\usepackage{marginnote}
\usepackage{dsfont}

\setlength{\topmargin}{0in}
\setlength{\headheight}{0.1in}
\setlength{\headsep}{0.3in}
\setlength{\textheight}{8.2in}
\setlength{\textwidth}{460pt}
\setlength{\oddsidemargin}{0.1in}
\setlength{\evensidemargin}{0.1in}

\ifpdf
   \DeclareGraphicsRule{*}{mps}{*}{}
\fi
%%%%%%%%These package should be deleted before submitting
%\usepackage{hyperref}
%\usepackage[right,displaymath,mathlines]{lineno}
%\usepackage[notref,notcite]{showkeys} 
%\linenumbers
%%%%%%%%%%%%%%%%%%%%%%%%%%%%%%%%
\usepackage[colorlinks]{hyperref}
\hypersetup{
linkcolor=blue,          % color of internal links (change box color with linkbordercolor)
citecolor=green,        % color of links to bibliography
}

\usepackage{tikz}%pictures drawn in latex

\newtheorem{thm}{Theorem}[section]
\newtheorem{lem}[thm]{Lemma}

\newtheorem{prop}[thm]{Proposition}

\theoremstyle{definition}
\newtheorem{defi}[thm]{Definition}

\newtheorem{remark}[thm]{Remark}

\theoremstyle{remark}

\numberwithin{equation}{section}
%\newtheorem{acknowledgments}{Acknowledgments}
% Absolute value notation
%\newcommand{\abs}[1]{\lvert#1\rvert}
%
%\makeatletter
%\newcommand{\rmnum}[1]{\romannumeral #1}
%\newcommand{\Rmnum}[1]{\expandafter\@slowromancap\romannumeral #1@}
%\makeatother

\definecolor{esperance}{rgb}{0.0,0.5,0.0}

\newcommand{\ba}{\mathbf{a}}
\newcommand{\bb}{\mathbf{b}}

\newcommand{\bd}{\mathbf{d}}
\newcommand{\be}{\mathbf{e}}

\newcommand{\bm}{\mathbf{m}}
\newcommand{\bn}{\mathbf{n}}

\newcommand{\bv}{\mathbf{v}}

%{Y_{\on{bad}}^{\cO,r_m}}

%\newcommand{\wt}{\widetilde}}

%\newcommand{\diag}{\mathrm{diag}}

%%%%%Redefine an existing command
%\renewcommand{\Latex}{\LaTex \xspace}
%%%%%%%%%%%%%%%%%%

% Blank box placeholder for figures (to avoid requiring any
% particular graphics capabilities for printing this document).

%              Latin Letters
%------------------------------------------------------------------------------------------------------------

%\newcommand{\be}{\beta}

\newcommand{\del}{\delta}
\newcommand{\Del}{\Delta}

\newcommand{\eps}{\epsilon}

\newcommand{\sig}{\sigma}

\newcommand{\Om}{\Omega}

%------------------------------------------------------------------------------------------------------------
%-----Capital letters--------------

\newcommand{\cE}{\mathcal{E}}

\newcommand{\cH}{\mathcal{H}}

\newcommand{\cK}{\mathcal{K}}
\newcommand{\cL}{\mathcal{L}}

\newcommand{\cO}{\mathcal{O}}
\newcommand{\cP}{\mathcal{P}}
\newcommand{\cQ}{\mathcal{Q}}
\newcommand{\cR}{\mathcal{R}}

\newcommand{\cT}{\mathcal{T}}

\newcommand{\cW}{\mathcal{W}}

\newcommand{\bR}{\mathbb{R}}
\newcommand{\bZ}{\mathbb{Z}}
\newcommand{\bQ}{\mathbb{Q}}

\newcommand{\bN}{\mathbb{N}}

%%%%%%%Groups
\newcommand{\SL}{\operatorname{SL}}

\newcommand*{\transp}[2][-1mu]{\ensuremath{\mskip1mu\prescript{\smash{\mathrm t\mkern#1}}{}{\mathstrut#2}}}
%------------------------------------------------------------------------------------------------------------
%              Special commans
%------------------------------------------------------------------------------------------------------------
%\newcommand{\def}{\overset{\on{def}}{=}}

\newcommand\set[1]{\left\{#1\right\}}

\newcommand\tb[1]{\textbf{#1}}

%\newcommand{\spa}{\on{span}}

%------------------------------------------------------------------------------------------------------------
%------------------------------------------------------------------------------------------------------------
%             Arrows
%------------------------------------------------------------------------------------------------------------
%------------------------------------------------------------------------------------------------------------

\newcommand{\onto}{\xymatrix{\ar@{>>}[r]&}}
% Im not sure what is the difference between the above \olra, \ora and the following

%------------------------------------------------------------------------------------------------------------
%------------------------------------------------------------------------------------------------------------
%        Makros for equations enumeration etc.
\newcommand{\eq}[1]
{
\begin{equation*}
{#1}
\end{equation*}
}
%%%%%%%%%%%%
\newcommand{\eqlabel}[2]
{
\begin{equation}
{#2}\label{#1}
\end{equation}
}
%%%%%%%%%%%%%%

%%%%%%%%%%%%%%

%%%%%%%%%%%%%%
%-----------------------------------------------    Notes of coauthors     

%%%
%%% it is possible to change the font from tiny to even smaller. here are two possibilities
% \font\sn = cmssi8 scaled \magstep0
% \font\ssn = cmssi5 scaled \magstep0
% To use them write {\sn ...text... } for example.
%-----------------------------------------------------

\makeatletter
\newcommand*{\rom}[1]{\expandafter\@slowromancap\romannumeral #1@}
\makeatother

\begin{document}

\title[Moments of Margulis functions]{Moments of Margulis functions and indefinite ternary quadratic forms}
\author{Wooyeon Kim}
\address{Wooyeon Kim. Department of Mathematics, ETH Z\"{u}rich, 
{\it wooyeon.kim@math.ethz.ch}}

% \thanks will become a 1st page footnote.
\thanks{}

%\author{}

\keywords{}

\def\thefootnote{}
\footnote{2020 {\it Mathematics
Subject Classification}: Primary 11E10 ; Secondary 22F30, 37A44.}   %%d 
\def\thefootnote{\arabic{footnote}}
\setcounter{footnote}{0}

\begin{abstract}
In this paper, we prove a quantitative version of the Oppenheim conjecture for indefinite ternary quadratic forms: for any indefinite irrational ternary quadratic form $Q$ that is not extremely well approxiable by rational forms, and for $a<b$ the number of integral vectors of norm at most $T$ satisfying $a<Q(v)<b$ is asymptotically equivalent to $\big(\mathsf{C}_Q(b-a)+\mathsf{I}_{Q}(a,b)\big)T$ as $T$ tends to infinity, where the constant $\mathsf{C}_Q>0$ depends only on $Q$, and the term $\mathsf{I}_{Q}(a,b)T$ accounts for the contribution from rational isotropic lines and degenerate planes.

The main technical ingredient is a uniform bound for the $\lambda$-moment of the Margulis $\alpha$-function along expanding translates of a unipotent orbit in $\SL_3(\bR)/\SL_3(\bZ)$, for some $\lambda>1$. To establish this, we introduce a new height function $\widetilde{\alpha}$ on the space of lattices, which captures the failure of the classical Margulis inequality. This moment bound implies equidistribution of such translates with respect to a class of unbounded test functions, including the Siegel transform.

\end{abstract}

\maketitle\section{Introduction}
 The Oppenheim conjecture, proved by Margulis in 1986, states that for a non-degenerate indefinite irrational quadratic form $Q$ in $n\geq 3$ variables, the image set $Q(\bZ^d)$ of integral vectors is a dense subset of $\bR$ unless $Q$ is proportional to a form with rational coefficients. In this paper, we say that a quadratic form is \textit{rational} if it is proportional to a form with rational coefficients, and \textit{irrational} otherwise.

 A quantitative version of the Oppenheim conjecture was established in \cite{EMM98} and \cite{EMM05} for quadratic forms in $n\ge 4$ variables. For $T>0$ let $B(T)$ denote the ball of radius $T$ centered at zero, and for $a,b\in\bR$ with $a<b$ let $N_Q(a,b,T)$ be the cardinality of the set
 $$\set{v\in \bZ^n: v\in B(T) \textrm{ and }a<Q(v)<b}.$$
 Determining the asymptotic of $N_Q(a,b,T)$ is referred to as quantitative Oppenheim conjecture. In \cite{EMM98}, Eskin, Margulis, and Mozes showed that if $Q$ is an indefinite irrational quadratic form and the signature of $Q$ is not $(2,2)$ and $(2,1)$, then
\eqlabel{eq:EMM98asymptotic}{\lim_{T\to\infty}\frac{N_Q(a,b,T)}{T^{n-2}}= \mathsf{C}_Q(b-a),}
where $\mathsf{C}_Q>0$ depends only on $Q$.

If the signature of $Q$ is $(2,2)$ or $(2,1)$, then the universal formula \eqref{eq:EMM98asymptotic} is no longer true. In fact, there are irrational forms for which $N_Q(a,b,T_j)\gg T_j^{n-2}\log T_j$ along a subsequence $T_j\to\infty$, and one may obtain such quadratic forms by considering irrational forms which are very well approximated by rational quadratic forms. Thus, one needs to assume certain Diophantine conditions on quadratic forms of signature $(2,2)$ or $(2,1)$ to obtain an asymptotic formula like \eqref{eq:EMM98asymptotic}.

\begin{defi}
    For a quadratic form $Q$ we say that $Q$ is \textit{extremely well approximable} (EWA) if for any $c>0$ and $M>1$ there exists an integral quadratic form $Q'$ of the same signature such that
    $$\|Q-\rho Q'\|\leq  c\|Q'\|^{-M},$$ where $\rho=\rho(Q')$ is the constant satisfying $\det(\rho Q')=\det(Q)$. Here and hereafter, $\|\cdot\|$ for quadratic forms stands for the supremum of the coefficients.
\end{defi}

In \cite{EMM05} it was proved that if $Q$ is a quadratic form of signature $(2,2)$, which is not EWA, then
\eqlabel{eq:EMM05asymptotic}{\lim_{T\to\infty}\frac{\widetilde{N}_Q(a,b,T)}{T^{2}}= \mathsf{C}_Q(b-a),}
where $\widetilde{N}_Q(a,b,T)$ counts the points not contained in rational isotropic subspaces. 

The purpose of this paper is to establish the asymptotic behavior of $N_Q(a,b,T)$ for quadratic forms of signature $(2,1)$ that are not EWA.

\begin{defi}
    For a given indefinite ternary quadratic form $Q$, we say that a line $l\subset \bR^3$ through the origin is \textit{isotropic} if $Q(v_l)=0$ for a nonzero vector $v_l$ on $l$. We also say that a plane $P\subset \bR^3$ through the origin is \textit{degenerate} if the restriction $Q|_P$ on $P$ is the square of a linear form. 
\end{defi}

The main result of this paper is as follows.

\begin{thm}[Modified count]\label{thm:quantitativeoppenheim0}
    Let $Q$ be an indefinite ternary quadratic form that is not EWA. Then for any $a<b$ we have
    \eqlabel{eq:mainasymptotic}{\lim_{T\to\infty}\frac{\widetilde{N}_Q(a,b,T)}{T}=\mathsf{C}_Q(b-a),}
    where $\widetilde{N}_Q(a,b,T)$ denotes the number of points not lying on any rational isotropic lines or rational degenerate planes through the origin. The constant $\mathsf{C}_Q>0$ depends only on $Q$.
\end{thm}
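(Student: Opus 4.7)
The plan is to follow the Eskin--Margulis--Mozes strategy adapted to signature $(2,1)$, using the uniform $L^\lambda$-moment bound on the Margulis $\alpha$-function (announced in the abstract) as the essential new analytic input. First, write $Q(v) = \rho\, Q_0(gv)$ for a standard ternary form $Q_0$ of signature $(2,1)$, some $g \in \SL_3(\bR)$, and $\rho>0$. Parametrize the annular region $\set{v\in\bR^3 : a < Q(v) < b} \cap B(T)$ via the Cartan decomposition $H = KAK$ of $H = \SO(Q_0)$: up to a thin boundary layer, this region is swept out by the action of an expanding piece $E_T \subset H$ on a fixed bounded transversal. This lets me rewrite
\[
\widetilde{N}_Q(a,b,T) \;=\; \int_{E_T} \widehat{f}(h\, g \bZ^3)\,dh \;+\; O(1),
\]
where $f$ is the indicator of a bounded set in $\bR^3$ normalized so that $\int f$ matches the volume factor $\mathsf{C}_Q(b-a)$, $\widehat{f}(x) = \sum_{v \in x\setminus\{0\}} f(v)$ is the Siegel transform, $E_T$ has Haar volume asymptotically proportional to $T$, and the restriction to $\widetilde{N}_Q$ corresponds to removing vectors on rational isotropic lines and planes from the sum defining $\widehat{f}$.

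The second step is equidistribution. For any bounded continuous $\varphi$ on $X := \SL_3(\bR)/\SL_3(\bZ)$, the Dani--Margulis--Ratner theory, combined with the non-EWA Diophantine assumption on $Q$ which rules out obstructing invariant subvarieties, yields
\[
\frac{1}{\mathrm{vol}(E_T)} \int_{E_T} \varphi(hg\bZ^3)\,dh \;\longrightarrow\; \int_X \varphi\,d\mu_X \quad \text{as } T\to\infty.
\]
The third step upgrades from bounded $\varphi$ to the unbounded Siegel transform $\widehat{f}$. Using the standard pointwise bound $|\widehat{f}(x)| \ll \alpha(x)$ together with the uniform $\lambda$-moment hypothesis
\[
\sup_{T \geq 1} \frac{1}{\mathrm{vol}(E_T)} \int_{E_T} \alpha(hg\bZ^3)^\lambda\,dh \;\leq\; C,
\]
split $\widehat{f} = \widehat{f}\cdot \mathbbm{1}_{\alpha \leq L} + \widehat{f}\cdot \mathbbm{1}_{\alpha > L}$: the bounded piece is handled by the previous step, while the tail is uniformly controlled by H\"older's inequality,
\[
\frac{1}{\mathrm{vol}(E_T)} \int_{E_T} |\widehat{f}|\, \mathbbm{1}_{\alpha > L}\,dh \;\ll\; L^{1-\lambda},
\]
which vanishes as $L\to\infty$ because $\lambda>1$. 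Siegel's integral formula $\int_X \widehat{f}\, d\mu_X = \int_{\bR^3} f(v)\,dv$ then identifies the limit as $\mathsf{C}_Q(b-a)$.

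The principal obstacle I anticipate is the correct handling of the rational isotropic lines and planes. Near such a rational isotropic subspace, $g \bZ^3$ contains short vectors on which $f$ is large, making both $\widehat{f}$ and $\alpha$ blow up; these excursions are exactly the phenomenon that forces the restriction from $N_Q$ to $\widetilde{N}_Q$ and that would, in the absence of a Diophantine hypothesis, destroy either the Ratner-type equidistribution or the moment bound. One therefore needs to (i) split $\widehat{f}$ into an \emph{isotropic part} (excluded by $\widetilde{N}_Q$) and a \emph{generic part}, (ii) verify that the generic part still satisfies the bound $|\widehat{f}| \ll \alpha$ with the same $\alpha$, and (iii) use the non-EWA condition to quantify a lower bound on the distance of $g \bZ^3$ to isotropic rational sublattices of given height. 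Point (iii) is precisely what feeds into the proof of the $\lambda$-moment bound for some $\lambda = \lambda(M) > 1$, and is the reason the Diophantine type $M$ enters the hypothesis at all; once this is in place, steps (i)--(ii) are the mechanical bookkeeping that converts the abstract moment bound into the asymptotic formula \eqref{eq:mainasymptotic}.
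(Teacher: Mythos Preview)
Your overall architecture---parametrize via $H=KAK$, reduce to an equidistribution statement for the Siegel transform, then upgrade from bounded to unbounded test functions by a moment bound---matches the paper's strategy exactly. The gap is in the third step: the moment bound you invoke,
\[
\sup_{T}\frac{1}{\operatorname{vol}(E_T)}\int_{E_T}\alpha(hg\bZ^3)^{\lambda}\,dh<\infty \quad\text{for some }\lambda>1,
\]
is \emph{false} in general under the hypotheses of the theorem. As the paper explains just after \eqref{MomentEMM}, whenever $\Del_Q\cup\Del_Q^*$ contains a nonzero null vector (equivalently, whenever $Q$ admits an isotropic rational line or plane), $\int_{-1}^{1}\alpha(a_tu_r\Del_Q)^{\lambda}\,dr$ diverges as $t\to\infty$ for every $\lambda>1$. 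The non-EWA condition does not rule this out; indeed this is exactly why the theorem is stated for $\widetilde N_Q$ rather than $N_Q$. So your truncation argument as written has no input to feed on.

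The paper's fix is to replace $\alpha$ by the modified height $\widehat\alpha_{\eta,M}$ of \eqref{eq:alphahatdef}, which omits the $(\eta,M)$-quasi-null vectors, and to prove the $(1+\del)$-moment bound for \emph{that} object (Theorem~\ref{thm:mainthm}). Your step (ii) then has to be sharpened: the generic part of the Siegel transform is not bounded by $\widehat\alpha_{\eta,M}$ alone, because there may be lattice vectors that are quasi-null but not null. The paper controls these separately in Proposition~\ref{prop:quasinullcontribution}, using the Diophantine hypothesis (via Lemma~\ref{lem:sixplanes}) to confine such vectors of comparable norm to at most six planes, and then applying the plane-wise supremum contraction estimate (Proposition~\ref{eq:refinedcontractionhypothesis}). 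This is not mechanical bookkeeping; it is a second place where the non-EWA hypothesis is used, independent of the proof of the moment bound itself. Once both the moment bound for $\widehat\alpha_{\eta,M}$ and the quasi-null correction are in hand, the rest of your outline goes through as in \S\ref{sec:MargulisInequality} and \S7.2.
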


Including the points lying on rational isotropic lines and rational degenerate planes, we have:

\begin{thm}[Complete count]\label{thm:quantitativeoppenheim}
    Let $Q$ be an indefinite ternary quadratic form that is not EWA. Then for any $a<b$ we have
    \eq{\lim_{T\to\infty}\frac{N_Q(a,b,T)}{T}=\mathsf{C}_Q(b-a)+\mathsf{I}_Q(a,b),}
    where $\mathsf{C}_Q>0$ depends only on $Q$, and the constant $\mathsf{I}_Q(a,b)\geq0$ depends only on $a$, $b$, and $Q$.
\end{thm}

\begin{remark}
    %In the asymptotic of $N_Q(a,b,T)$, the term $\mathsf{L}_Q\mathds{1}_{(a,b)}(0)T$ corresponds to the number of integral points on isotropic lines, and the term $\mathsf{P}_Q(\sqrt{b^+}-\sqrt{a^+})T$ corresponds to the number of integral points on isotropic planes. If there is neither isotropic rational line nor plane through the origin, i.e. both $Q=0$ and $Q^*=0$ do not admit a nontrivial integral solution, then $\mathsf{L}_Q=\mathsf{P}_Q=0$. Moreover, one can calculate the constants $\mathsf{L}_Q$ and $\mathsf{P}_Q$ explicitly in terms of the isotropic vectors (see \S7.3). If $Q$ is of signature $(1,2)$, then $(\sqrt{b^+}-\sqrt{a^+})$ is replaced with $(\sqrt{a^-}-\sqrt{b^-})$, where $a^-:=\max(-a,0)$ and $b^-:=\max(-b,0)$.
    Note that $\mathsf{C}_Q>0$ is the constant satisfying the following asymptotic for any $a<b$:
     $$\lim_{T\to\infty}\frac{\operatorname{Vol}\big(\set{v\in \bR^n: v\in B(T) \textrm{ and }a<Q(v)<b}\big)}{T^{n-2}}=\mathsf{C}_Q(b-a).$$
    Furthermore the constant $\mathsf{C}_Q$ can be explicitly computed as follows:
\eqlabel{eq:CQexpression}{\mathsf{C}_Q=\int_{\cH\cap B(1)}\frac{d\sig}{\|\nabla Q\|},}
    where $\cH:=\set{(v_1,v_2,v_3)\in\bR^3:v_2^2-2v_1v_3=0}$, and $d\sig$ is the area element on $\cH$.
    
    In the asymptotic of $N_Q(a,b,T)$, the term $\mathsf{I}_Q(a,b)T$ accounts for the number of integral points lying on rational isotropic lines and degenerate planes. If there are no such rational subspaces through the origin, that is, if neither $Q=0$ nor $Q^*=0$ admits a nontrivial integral solution, then $\mathsf{I}_Q(a,b)=0$ for any $a<b$. Here, $Q^*$ is the dual quadratic form of $Q$ defined as $Q^*(v)=\transp{v}A^{-1}v$, where $A$ is the symmetric matrix associated to $Q$, i.e, $Q(v)=\transp{v}Av$. We note that $P$ is degenerate if and only if the line orthogonal to $P$ is isotropic for the dual form $Q^*$; that is, $Q^*(v_P)=0$ for a nonzero vector $v_P$ orthogonal to $P$ (see Lemma \ref{lem:degenerateplane}). Moreover, one can calculate the constant $\mathsf{I}_Q(a,b)$ explicitly in terms of the isotropic vectors (see \S\ref{subsec:exceptionalsubspaces}).

\end{remark}

\subsection{Moments of Margulis functions}
For the lower bound of $N_Q(a,b,T)$, Dani and Margulis showed in \cite{DM93} that for any irrational quadratic form of signature $(p,q)$ with $p\geq 2$ and $q\geq 1$ \eqlabel{eq:DMlowerbound}{\liminf_{T\to\infty}\frac{N_Q(a,b,T)}{T^{n-2}}\geq \mathsf{C}_Q(b-a),}
where $n=p+q$. The proof of the lower bound \eqref{eq:DMlowerbound} is based on Ratner's equidistribution theorem for the action of $\operatorname{SO}(p,q)$ on $\operatorname{SL}_n(\bR)/\operatorname{SL}_n(\bZ)$, and the linearization method developed in \cite{DM93}.
%Recently, for $n=3$ an effective version of this result is announced by Lindenstrauss, Mohammadi, Wang, and Yang, namely, 
%\eqlabel{eq:effectivelowerbound}{N_Q(a,b,T)\geq \mathsf{C}_Q(b-a)T+O\big((1+|a|+|b|)^{A}T^{1-\gamma}\big)}
%for some constants $A>1$ and $0<\gamma<1$. Indeed, they proved a quantitative refinement of Ratner's equidistribution theorem in $\operatorname{SL}_3(\bR)/\operatorname{SL}_3(\bZ)$ with a polynomial error rate, which implies \eqref{eq:effectivelowerbound}.

The proof of the upper bound of $N_Q(a,b,T)$ is more subtle since an equidistribution theorem for \textit{unbounded} test functions is required, hence one needs a sharp estimate for the quantitative recurrence to compact sets in the space of lattices. Margulis $\alpha$-function is a height function constructed in \cite{EMM98} for this purpose, and the moments of the height function integrated over $\operatorname{SO}(p,q)$-orbits are studied in \cite{EMM98} and \cite{EMM05}.

The construction of the $\alpha$-function on the space of lattices is as follows. Let $\Del$ be a lattice in $\bR^n$. A linear subspace $L$ of $\bR^n$ is called $\Del$-rational if $L\cap\Del$ is a lattice in $L$, and for a $\Del$-rational subspace $L$ we denote $d(L)$ the volume of $L\cap \Del$. For each $1\leq i\leq n-1$ let
\eqlabel{eq:alphaidef}{\alpha_i(\Del):=\sup\set{\frac{1}{d(L)}: L \textrm{ is a }\Del\textrm{-rational space of dimension }i},}
and define $\alpha(\Del):=\displaystyle\max_{1\leq i\leq n-1}\alpha_i(\Del)$.

From now on, let us focus on $n=3$. Let $G=\operatorname{SL}_3(\bR)$ and $\Gamma=\operatorname{SL}_3(\bZ)$. We denote by $X$ the space of unimodular lattices in $\bR^3$. The space $X$ is identified with $G/\Gamma$, since $\operatorname{Stab}_G(\bZ^3)=\Gamma$. Let $Q_0$ the standard indefinite ternary quadratic form defined by
$$Q_0(v_1,v_2,v_3)=v_2^2-2v_1v_3,$$
and let $H:=\operatorname{Stab}_G(Q_0)\simeq \operatorname{PSL}_2(\bR)$.

For a ternary quadratic form $Q$ and $g\in G$ we denote by $Q^g$ the quadratic form defined by $Q^g(v)=Q(gv)$. For any quadratic form $Q$ with $\operatorname{det}(Q)=1$ of signature $(2,1)$, there exists $g\in G$ such that $Q=Q_0^g$. Then let denote $\Delta_Q$ the lattice $g\bZ^3$, so that $Q(\bZ^3)=Q_0(\Delta_Q)$.

For a lattice $\Delta$ let $\Delta^*$ denote the dual lattice of $\Delta$. It holds that
\eq{\begin{aligned}
    \Delta^*&:=\set{v\in\bR^3: v\cdot w\in\bZ \textrm{ for all }w\in\Delta}\\&=\set{v\times w\in\bR^3: v,w\in\Delta}.
\end{aligned}}
Note that we can also view $\Delta^*$ as a lattice of $\bigwedge^2\bR^3$ via the canonical isomorphism $v\wedge w\mapsto v\times w$ from $\bigwedge^2\bR^3$ to $\bR^3$. We remark that the dual quadratic form $Q^*$ of $Q$ is the associated quadratic form to $\Delta_Q^*$. Indeed, $\Delta_Q^*=g^*\bZ^3$ and $Q^*=Q_0^{g^*}$, where $g^*:=\transp{g^{-1}}$. 
%Note that the definition of $Q^*$ does not depend on the choice of $g$ since $H=\operatorname{Stab}_G(Q_0)$ is invariant under the dual operation $h\mapsto h^*$.

We have $\alpha_1(\Delta)=\sup\set{\|v\|^{-1}:v\in\Delta\setminus\set{0}}$ and by duality $\alpha_2(\Delta)=\alpha_1(\Delta^*)$. Now the Margulis height function can be written as \eqlabel{eq:alphadef}{\begin{aligned}\alpha(\Delta)&=\max\set{\alpha_1(\Delta),\alpha_2(\Delta)}=\max\set{\alpha_1(\Delta),\alpha_1(\Delta^*)}\\&=\sup\set{\|v\|^{-1}:v\in(\Delta\cup\Delta^*)\setminus\set{0}}.\end{aligned}}

%Here and hereafter $\|\cdot\|$ stands for the supremum norm on the Euclidean spaces.

For $t,r\in\bR$ let
$$a_t:=\left(\begin{matrix} e^t & & \\ & 1 & \\ & & e^{-t}\end{matrix}\right),\qquad  u_r:=\left(\begin{matrix} 1 & r & \frac{r^2}{2} \\ & 1 & r  \\ & & 1\end{matrix}\right).$$
The moments of the function $\alpha$ integrated over expanding translates of a unipotent trajectory were studied in \cite{EMM98}:
\eqlabel{MomentEMM}{\sup_{t>0}\int_{-1}^{1}\alpha(a_tu_r\Delta)^{\lambda}dr<\infty, \quad \sup_{t>0}\frac{1}{t}\int_{-1}^{1}\alpha(a_tu_r\Delta)dr<\infty}
for any $0<\lambda<1$ and $\Delta\in X$. 

\begin{remark}\label{rem:doublezero}
    The exponent $\lambda$ here cannot be improved beyond $1$ for every $\Delta$. For instance, if $\Delta$ contains a nonzero vector $v=(v_1,v_2,v_3)\in\bR^3$ such that $Q_0(v)=0$ and $|v_2|\leq |v_3|$, then $\int_{-1}^{1}\alpha(a_tu_r\Delta)^{\lambda}dr$ diverges as $t\to\infty$ for any $\lambda>1$. Indeed, if $Q_0(v)=0$ and $|v_2|\leq |v_3|$ then $\|a_tu_rv\|\leq e^{-t}\|v\|$ for any $|r+\frac{v_2}{v_3}|\leq e^{-t}$, hence for any $t>0$ we have
$$\int_{-1}^{1}\alpha(a_tu_r\Delta)^{\lambda}dr\geq \int_{|r+\frac{v_2}{v_3}|\leq e^{-t}}\|a_tu_rv\|^{-\lambda}dr\geq e^{(\lambda-1)t}\|v\|^{-\lambda}.$$
\end{remark}

In this paper, we improve the exponent $\lambda$ in \eqref{MomentEMM} beyond $1$ under suitable Diophantine assumptions on the initial point $\Delta\in X$ that accounts for the not-EWA condition on $Q$. To improve the exponent $\lambda$ in the moment of the Margulis function $\int_{-1}^{1}\alpha(a_tu_r\Del_Q)^{\lambda}dr$ beyond $1$, one needs to exclude the contribution of quasi-null rational subspaces in $\Del_Q$.

\begin{defi}[Quasi-null vectors]
    For $\eta>0$ and $M>1$ we say that a vector $v\in\bR^3$ is $(\eta,M)$-quasi-null if $|Q_0(v)|<\eta\|v\|^{-50M}$. We denote by $\cH_{\eta,M}$ the set of $(\eta,M)$-quasi-null vectors, i.e. $$\cH_{\eta,M}:=\set{v\in\bR^3: |Q_0(v)|<\eta\|v\|^{-50M}}.$$
\end{defi}

For $\eta>0$ and $M>1$ we define modified height functions $$\widehat{\alpha}_{1,\eta,M},\widehat{\alpha}_{2,\eta,M},\widehat{\alpha}_{\eta,M}:H \times X\to (0,\infty)$$ excluding quasi-null rational subspaces, by
\eq{\widehat{\alpha}_{1,\eta,M}(g;\Delta):= \sup\set{\|gv\|^{-1}:v\in \Delta\setminus \cH_{\eta,M}},\quad \widehat{\alpha}_{2,\eta,M}(g;\Delta):= \widehat{\alpha}_{1,\eta,M}(g^*;\Delta^*),}
\eq{\widehat{\alpha}_{\eta,M}(g;\Delta):= \max\set{\widehat{\alpha}_{1,\eta,M}(g;\Delta),\widehat{\alpha}_{2,\eta,M}(g;\Delta)},}
in the same spirit as in \cite{EMM05}.

\begin{defi}
    For $M>1$ we say that a ternary quadratic form $Q$ with $\operatorname{det}Q=1$ is of \textit{(Diophantine) type} $M$ if there exists $c>0$ such that
    $$\|Q-\rho Q'\|> c\|Q'\|^{-M}$$ for any nonzero integral ternary quadratic form $Q'$, where $\rho=(\operatorname{det}Q')^{-\frac{1}{3}}$.
\end{defi}

    Note that a ternary quadratic form $Q$ with $\operatorname{det}Q=1$ is EWA if and only if $Q$ is not of type $M$ for any $M>1$.

%\begin{thm}[Moment estimates, simplified version]\label{thm:Margulismoments}
%    For $M>1$ there exists a constant $\del=\del(M)>0$ such that the following holds. Let $Q$ be an indefinite ternary quadratic form of type $M$ with $\operatorname{det}(Q)=1$. Suppose that there exists $\eta>0$ such that
%    \eqlabel{eq:isotropicDiophantine}{\min\set{|Q(\bm)|,|Q^*(\bm)|}\geq \eta\|\bm\|^{-M}}
%    for any $\bm\in \bZ^3\setminus\set{0}$. Then we have
%    \eqlabel{eq:unipotenthighermoment0}{\sup_{t>0}\int_{-1}^{1}\alpha(a_tu_r\Del_Q)^{1+\del}dr<\infty.}
%\end{thm}

\begin{thm}[Main moment esitmates]\label{thm:mainthm}
    For any $M>1$ there exists a constant $\del=\del(M)>0$ such that the following holds. If $Q$ is of type $M$, then for any $\eta>0$ 
    \eqlabel{eq:unipotenthighermoment}{\sup_{t>0}\int_{-1}^{1}\widehat{\alpha}_{\eta,M}(a_tu_r;\Del_Q)^{1+\del}dr<\infty.}
\end{thm}

In fact, \eqref{eq:unipotenthighermoment} is equivalent to the following statement:
\eqlabel{eq:compacthighermoment}{\sup_{t>0}\int_{K}\widehat{\alpha}_{\eta,M}(a_tk;\Del_Q)^{1+\del}dk<\infty,}
where $K$ is the maximal compact subgroup of $H$. Then \eqref{eq:compacthighermoment} implies Theorem \ref{thm:quantitativeoppenheim0} and Theorem \ref{thm:quantitativeoppenheim}, by an argument identical to that used in \cite[\S3.4, \S3.5]{EMM98} to deduce \cite[Theorem 2.1]{EMM98} from \cite[Theorem 2.3]{EMM98}. The derivation of Theorem \ref{thm:quantitativeoppenheim0} and Theorem \ref{thm:quantitativeoppenheim} from Theorem \ref{thm:mainthm} is outlined in Subsection \S\ref{sec:momentfromcounting}.

\subsection{Margulis inequality}
The uniform boundedness \eqref{eq:compacthighermoment} of the $\lambda$-moment for some $\lambda>1$ forms the main technical core of this paper, playing a role analogous to that in \cite{EMM98, EMM05} for other signatures. For $(p,q)\neq (2,1),(2,2)$, the key ingredient in establishing an analog of \eqref{eq:compacthighermoment} was the so-called Margulis inequality developed in \cite{EMM98}. This inequality asserts that for any $\lambda<2$ and sufficiently large $t$, there exist constants $0<c<1$ and $B>1$ such that
$$\int_K\alpha(a_tk\Delta)^{\lambda}dk\leq c\alpha(\Delta)^{\lambda}+B$$
for every $\Delta\in \operatorname{SL}_d(\mathbb{R})/\operatorname{SL}_d(\mathbb{Z})$. Iterating this inequality yields the desired uniform boundedness of the $\lambda$-moment. Since \cite{EMM98}, Margulis inequalities have been established in broader contexts on homogeneous spaces, well beyond their original application in the quantitative Oppenheim conjecture. We refer the reader to \cite{EM22} for a historical overview and a comprehensive list of references.

However, as alluded to in Remark \ref{rem:doublezero}, for $(p,q)= (2,1),(2,2)$ the Margulis inequality holds only for exponents $\lambda<1$, which is insufficient to deduce the quantitative Oppenheim conjecture. To overcome this obstacle in the case $(p,q)=(2,2)$, \cite{EMM05} established the uniform boundedness of the moment for $\lambda=1.05$ directly, by employing a geometric argument using coverings by rectangles in $\mathbb{R}^2$. Nevertheless, it remained unclear how to generalize this method to more general settings in homogeneous dynamics, including the case $(p,q)=(2,1)$.

The approach taken in this paper is closer in spirit to the original strategy of \cite{EMM98} based on the Margulis inequality, rather than the geometric argument employed in \cite{EMM05}. The main novelty lies in the introduction of a new height function $\widetilde{\alpha}$, which incorporates the distance to the locus $Q_0=0$, the source of the failure of the Margulis inequality. This modified height function satisfies the desired Margulis inequality for most $\Delta$, and moreover, the exceptional set where the Margulis inequality fails can be characterized explicitly. The contribution from pieces of orbits passing through this exceptional set—a situation not encountered in \cite{EMM98}—is controlled using the finiteness of isotropic vectors implied by the irrationality of the quadratic form, together with an effective avoidance principle for periodic orbits. A more detailed outline of this strategy is provided in \S\ref{subsec:outline}. We expect that this method will offer a systematic approach to establishing equidistribution results with respect to unbounded test functions in broader settings.

%Notice that the assumption (1) of Theorem \ref{thm:quantitativeoppenheim} corresponds to the $Q_0$-type Diophantine condition \eqref{def:QDiophantine}, and (2) of \ref{thm:quantitativeoppenheim} corresponds to the $H$-type Diophantine condition \eqref{def:HDiophantine}. Thus, $Q$ satisfies the assumptions (1) and (2) of Theorem \ref{thm:quantitativeoppenheim} if and only if the corresponding lattice $\Delta_Q$ is of type $M$. 

\subsection{Related results}

Since Margulis proved the Oppenheim conjecture, various refinements and extensions of the Oppenheim conjecture have been studied. Recently, there has been increased interest in developing \textit{effective} versions of the Oppenheim conjecture. The basic question in this direction is as follows. Given a quadratic form $Q$ in $n$ variables and $\xi\in\bR$, how large is the size of a nontrivial solution $\bm\in\bZ^n$ to the Diphantine inequality $|Q(\bm)-\xi|<\eps$, in terms of $\eps^{-1}$? We remark that asymptotic formulas like \eqref{eq:EMM98asymptotic}, \eqref{eq:EMM05asymptotic}, and \eqref{eq:mainasymptotic} do not give information on the explicit size of $\bm$. For quadratic forms in $n\geq 5$ variables, Buterus, Götze, Hille, and Margulis established an effective version with a polynomial rate using Fourier analytic methods in \cite{BGHM22} (see also \cite{BG99,Got04}). For ternary quadratic forms, an effective version with a logarithmic rate was proved by Lindenstrauss and Margulis \cite{LM14}. Very recently, Lindenstrauss, Mohammadi, Wang, and Yang established an effective result with a polynomial rate in \cite{LMWY25}, building on their recent groundbreaking advances in effective equidistribution \cite{LM23,Yan25,LMW22}. Their result relies on a new effective equidistribution theorem, combined with the quantitative non-divergence estimate established in the present paper.

\subsection{Structure of the paper}
This paper is organized as follows. 

In Section~2, we introduce a notion of quasi-null vectors and define Margulis functions excluding the quasi-null vectors. Then we give an outline of the proof of Theorem~\ref{thm:mainthm}, the main technical result of this paper, and sketch how Theorem~\ref{thm:quantitativeoppenheim0} follows from it.
Sections~3-6 are devoted to proving Theorem~\ref{thm:mainthm}. In Section~3, we define height functions on $\bR^3\setminus\set{0}$ involving distance to the locus $Q_0=0$ and prove contraction inequalities for such functions. In Section~4, we define modified Margulis functions on the space of lattices and prove subharmonic estimates for such Margulis functions using the contraction inequalities from Section~3. In Section~5, we prove avoidance estimates to control the contribution of the points not satisfying the subharmonic estimates. In Section~6, we complete the proof of Theorem~\ref{thm:mainthm} by assembling the ingredients we developed in Sections~3-5. In Section~7 we deduce Theorem~\ref{thm:quantitativeoppenheim0} and Theorem~\ref{thm:quantitativeoppenheim} from Theorem~\ref{thm:mainthm}.

\vspace{5mm}
\tb{Acknowledgments}. This paper is part of my Ph.D. thesis conducted at ETH Z\"{u}rich under the guidance of Prof. Manfred Einsiedler. I am deeply grateful to him for his insightful discussions, particularly on isotropic vectors of quadratic forms, as well as for carefully reviewing an earlier version of this manuscript and providing invaluable feedback. I also thank Hee Oh for helpful discussions that significantly improved the exposition of this paper.

\section{Preliminaries and Overview of the proofs}

We shall use the standard notation $A\ll B$ or $A=O(B)$ to mean that $A\leq CB$ for some constant $C>0$. In this paper, all the implied constants are absolute, unless mentioned explicitly.

\subsection{Metrics and norms}
Let $\bd_{G}(\cdot,\cdot)$ be a right invariant Riemannian metric on $G$. Then this metric induces a metric $\bd_{X}(\cdot,\cdot)$ on $X$. We denote by $B^G(r)$ the open $r$-ball around $\operatorname{id}$ with respect to the metric $\bd_{G}$.

For $v=(v_1,v_2,v_3)\in\bR^3$ and $\gamma=(\gamma_{ij})_{1\leq i,j\leq 3}\in\operatorname{Mat}_{3,3}(\bR)$, we use the supremum norms $\|v\|=\displaystyle\max_{1\leq i\leq 3}|v_i|$ and $\|\gamma\|=\displaystyle\max_{1\leq i,j\leq 3}\|\gamma_{ij}\|$. 

By re-scaling the metric $\bd_G$ if necessary, we may assume that $\bd_G$ satisfies
\eqlabel{opnorm}{\|g-\textrm{id}\|_{\textrm{op}}\leq r}
for any sufficiently small $r>0$ and $g\in B^G(r)$, where $\|\cdot\|_{\textrm{op}}$ stands for the operator norm of $\operatorname{Mat}_{3,3}(\bR)$ with respect to the supremum norm of $\bR^3$.

\subsection{Diophantine conditions and quasi-null vectors}
For an indefinite ternary quadratic form $Q$, if there are five nonzero integral vectors in $\set{v\in\mathbb{R}^3:Q(v)=0}$ for which no three of these vectors lie on the same plane, then $Q$ must be a rational quadratic form. The following lemma is a quantitative refinement of this observation.

\begin{lem}\label{lem:effecivekernel}
Let $Q$ be an indefinite ternary quadratic form with $\operatorname{det}Q=1$ and $\bm_1,\ldots,\bm_5\in\bZ^3\setminus\set{0}$ be integral vectors for which no three of these vectors lie on the same plane through the origin. Let $0\leq\eps<1$ and $R>10$. Suppose that $|Q(\bm_i)|\leq\eps$ and $\|\bm_i\|<R$ and all $1\leq i\leq 5$. Then there exists a nonzero integral ternary quadratic form $Q'$ satisfying $|Q-\rho Q'|\ll \eps R^{10}$ and $\|Q'\|\leq 10^6R^{14}$, where $\rho=(\operatorname{det}Q')^{-\frac{1}{3}}$.
\end{lem}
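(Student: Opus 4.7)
The strategy is a quantitative refinement of the classical fact that five points in $\bP^2$ in general linear position lie on a unique conic, which here can be represented by an integral form of controlled height. My plan is to set up the evaluation map on ternary quadratic forms, produce an integer element of its kernel via Cramer's rule, and then exploit the unit-determinant normalization to compare $Q$ with that integer form.

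First, consider the linear evaluation map $E\colon\R^6\to\R^5$, $E(P):=(P(\bm_1),\ldots,P(\bm_5))$, where $\R^6$ denotes the space of ternary quadratic forms in the monomial basis. Its matrix $M$ is a $5\times 6$ integer matrix whose entries are bounded by $O(R^2)$. The hypothesis that no three of $\bm_1,\ldots,\bm_5$ lie on a common plane through the origin forces $\operatorname{rank}(M)=5$: otherwise $\ker E$ would be at least two-dimensional, and a degenerate member $\ell_1\ell_2$ of the resulting pencil would have one of the two planes $\{\ell_k=0\}$ contain three of the points, a contradiction. Define $Q'\in\Z^6\setminus\{0\}$ by $(Q')_i:=(-1)^i\det M_{\widehat{i}}$, where $M_{\widehat{i}}$ is the $5\times 5$ minor of $M$ obtained by deleting the $i$-th column. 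Then $MQ'=0$, so $Q'(\bm_j)=0$ for every $j$, and the Hadamard bound yields $\|Q'\|\le 5!\,R^{10}$, comfortably within the claimed bound $10^6R^{14}$. The same general-position argument rules out a factorization $Q'=\ell_1\ell_2$, so $\det Q'$ is a non-zero integer and $\rho:=(\det Q')^{-1/3}$ is well-defined.

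To compare $Q$ with $\rho Q'$, I would decompose $Q=tP_0+Q^\perp$ orthogonally, with $P_0:=Q'/\|Q'\|_2$ spanning $\ker E$ and $Q^\perp\perp\ker E$. A lower bound $\sigma_{\min}(M)\gg R^{-8}$ on the smallest non-zero singular value of $M$ follows from the identity $\prod_{i=1}^5\sigma_i(M)=\bigl(\sum_j|\det M_{\widehat{j}}|^2\bigr)^{1/2}\ge 1$ (by integrality), combined with the trivial upper bound $\sigma_i(M)\le\|M\|_F\ll R^2$; this yields $\|Q^\perp\|\ll\sigma_{\min}(M)^{-1}\|E(Q)\|\ll R^{8}\eps$. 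It remains to estimate $\bigl|t-\rho\|Q'\|_2\bigr|$: both $Q$ and $\rho Q'$ lie on the hypersurface $\{\det=1\}$, so writing $t_0:=\rho\|Q'\|_2$ (which satisfies $t_0^3\det P_0=1$) and expanding $1=\det(tP_0+Q^\perp)$ via the cubic formula for the determinant of a sum of symmetric matrices, one solves for $|t-t_0|$ in terms of $\|Q^\perp\|$ and $|\det P_0|$. A triangle inequality $\|Q-\rho Q'\|\le\|Q^\perp\|+|t-t_0|\,\|P_0\|$ then gives the required bound on $\|Q-\rho Q'\|$.

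The main obstacle is the last step. The integer line $\R\cdot Q'$ can be nearly tangent to the degenerate locus $\{\det=0\}$, precisely when $|\det P_0|$ is small compared to $\|P_0\|^3$; in that regime $t_0$ (and hence $\|Q\|$) can be of order $\|Q'\|_2$, and the constraint $\det Q=1$ has to be used carefully to extract $|t-t_0|$ from the cubic equation without losing excessive powers of $R$. It is this bookkeeping that determines the final exponents of $R$, and where the generous factors $R^{14}$ in the bound on $\|Q'\|$ and $R^{10}$ in the final error provide the necessary slack.
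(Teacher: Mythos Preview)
Your approach is sound qualitatively but does not yield the stated exponent $R^{10}$. Tracking your constants: Hadamard gives $\|Q'\|\ll R^{10}$ and the singular-value argument gives $\|Q^\perp\|\ll\eps R^{8}$. Since $|\det Q'|\ge 1$ you only get $|\det P_0|\ge\|Q'\|_2^{-3}\gg R^{-30}$, hence $|t_0|=|\det P_0|^{-1/3}$ can be as large as $R^{10}$. Expanding $\det(tP_0+Q^\perp)=1$ and subtracting $t_0^3\det P_0=1$ then gives
\[
|t-t_0|\ \ll\ |t_0|\cdot\|Q^\perp\|\,t_0^{2}\ =\ t_0^{3}\,\|Q^\perp\|\ \ll\ \eps R^{38},
\]
so your argument delivers $\|Q-\rho Q'\|\ll\eps R^{38}$, not $\eps R^{10}$. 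The ``slack'' in the allowed bounds $\|Q'\|\le 10^{6}R^{14}$ and $\eps R^{10}$ does not help: the loss comes from the possibility that $|\det Q'|=1$ while $\|Q'\|\asymp R^{10}$, exactly the near-tangency to $\{\det=0\}$ you flag, and you cannot avoid it by rescaling $Q'$ since the integer points in the one-dimensional kernel are all its multiples. So the proposal proves a weaker lemma with a larger power of $R$ (which, incidentally, would still suffice for the paper's applications after enlarging the exponent $50$ in the definition of $\cH_{\eta,M}$), but not the lemma as stated.

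The paper avoids this by a different construction. It first changes coordinates by the integer matrix $\gamma$ with columns $\bm_1,\bm_2,\bm_3$; then $Q_1=Q\circ\gamma$ has diagonal entries $Q(\bm_i)$, each $<\eps$, so $Q_1$ is already $\eps$-close to a form with zero diagonal, described by the three off-diagonal numbers $\bq=(Q(\bm_2,\bm_3),Q(\bm_3,\bm_1),Q(\bm_1,\bm_2))$. The remaining constraints from $\bm_4,\bm_5$ become two near-orthogonality relations $\ba\cdot\bq,\ \bb\cdot\bq=O(\eps R^{6})$ with explicit integer vectors $\ba,\bb$ of size $O(R^{6})$, and one takes $\bc=\ba\times\bb$ as the off-diagonal of $Q_1'$, pulling back by $\operatorname{adj}(\gamma)$ at the end. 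The point is that the change of basis kills three of the six coefficients up to $O(\eps)$ \emph{before} any linear algebra, reducing the problem from a $5\times 6$ system to a $2\times 3$ one; this is what keeps the powers of $R$ small.
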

\begin{proof}
    Let $\gamma\in \operatorname{Mat}_{3,3}(\bZ)$ be the matrix whose columns are given by $\bm_1,\bm_2,\bm_3$. Note that $\|\gamma\|<R$ and $\|\operatorname{adj}(\gamma)\|<2R^2$, where $\operatorname{adj}(\gamma):=(\operatorname{det}\gamma)\gamma^{-1}$.  Also, $\operatorname{det}\gamma$ is a nonzero integer and $|\operatorname{det}\gamma|\leq 6R^{3}$ since $\bm_1,\bm_2,\bm_3$ are linearly independent integral vectors. Let $Q_1:=Q\circ \gamma$ and write
    \eqlabel{eq:Q1expansion}{\begin{aligned}
    Q_1(v_1,v_2,v_3)&=Q(v_1\bm_1+v_2\bm_2+v_3\bm_3)\\&=\sum_{i=1}^{3}Q(\bm_i)v_i^2+2\sum_{1\leq i<j\leq 3}Q(\bm_i,\bm_j)v_iv_j,
    \end{aligned}}
    where $Q(\bm_i,\bm_j):=\frac{1}{2}\big(Q(\bm_i+\bm_j)-Q(\bm_i)-Q(\bm_j)\big)$. Note that $|Q(\bm_i)|\leq\eps$ for all $1\leq i\leq 3$ and $|Q(\bm_i,\bm_j)|\leq 10\|Q\|R^2$ for all $1\leq i<j\leq 3$. Let $a_1,a_2,a_3,b_1,b_2,b_3\in\bZ$ be the integers given by
    $$\transp{(a_1,a_2,a_3)}=\operatorname{adj}(\gamma)\bm_4, \quad \transp{(b_1,b_2,b_3)}=\operatorname{adj}(\gamma)\bm_5 $$
    Then we may also write
$$(\operatorname{det}\gamma)\bm_4=a_1\bm_1+a_2\bm_2+a_3\bm_3,$$
$$(\operatorname{det}\gamma)\bm_5=b_1\bm_1+b_2\bm_2+b_3\bm_3.$$ Moreover, $\displaystyle\max_{1\leq i\leq 3}\max(|a_i|,|b_i|)\leq 3\|\operatorname{adj}(\gamma)\|\max(\|\bm_4\|,\|\bm_5\|)<6R^3$. In view of \eqref{eq:Q1expansion} it gives that
$$\left|\sum_{1\leq i<j\leq 3}Q(\bm_i,\bm_j)a_ia_j\right|=\frac{1}{2}\left|(\operatorname{det}\gamma)^2Q(\bm_4)-\sum_{i=1}^{3}Q(\bm_i)a_i^2\right|\ll \eps R^6,$$
$$\left|\sum_{1\leq i<j\leq 3}Q(\bm_i,\bm_j)b_ib_j\right|=\frac{1}{2}\left|(\operatorname{det}\gamma)^2Q(\bm_5)-\sum_{i=1}^{3}Q(\bm_i)b_i^2\right|\ll \eps R^6.$$
Roughly speaking, the vector $\big(Q(\bm_2,\bm_3),Q(\bm_3,\bm_1),Q(\bm_1,\bm_2)\big)$ is almost orthogonal to integral vectors $\ba:=(a_3a_1,a_1a_2,a_2a_3)$ and $\bb:=(b_3b_1,b_1b_2,b_2b_3)$. Here $\ba$ and $\bb$ are linearly independent because no three of $\bm_1, \ldots, \bm_5$ lie in the same plane, and $\bm_4$ and $\bm_5$ are themselves linearly independent. Let $$(c_1,c_2,c_3):=\ba\times\bb=\big(a_1b_1(a_2b_3-a_3b_2),a_2b_2(a_3b_1-a_1b_3),a_3b_3(a_1b_2-a_2b_1)\big)$$ and consider an integral ternary quadratic form $$Q_1'(v_1,v_2,v_3)=c_1v_2v_3+c_2v_1v_3+c_3v_1v_2.$$ Note that $Q_1'$ satisfies $\|Q_1'\|\leq 10^4R^{12}$ and 
$\|Q_1-(\operatorname{det}(Q_1'))^{-\frac{1}{3}}Q_1'\|\ll\eps R^6$.
Now we set $Q':=Q_1'\circ \operatorname{adj}(\gamma)$, then it holds that $$\|Q'\|\leq 3\|\operatorname{adj}(\gamma)\|\|Q_1'\|\leq10^6R^{14}$$ and
$$\|Q-(\operatorname{det}(Q'))^{-\frac{1}{3}}Q'\|\leq 3\|\operatorname{adj}(\gamma)\| \|Q_1-(\operatorname{det}(Q_1'))^{-\frac{1}{3}}Q_1'\| \ll \eps R^{10},$$
as desired.
\end{proof}

Lemma \ref{lem:effecivekernel} directly implies the following qualitative statement.

\begin{lem}\label{lem:fourisotropic}
    Let $Q$ be an indefinite irrational ternary quadratic form with $\operatorname{det}Q=1$. There are at most four isotropic rational lines and degenerate rational planes through the origin, respectively. 
\end{lem}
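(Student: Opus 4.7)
The plan is to derive both bounds by contradiction, applying Lemma \ref{lem:effecivekernel} to exactly isotropic vectors (equivalently, in the limit $\eps \to 0$).

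\textbf{Isotropic lines.} Suppose, for contradiction, that there exist five distinct isotropic rational lines through the origin, and choose primitive integer representatives $\bm_1,\dots,\bm_5$, so that $Q(\bm_i)=0$ for each $i$. Before invoking Lemma \ref{lem:effecivekernel}, I first verify its general-position hypothesis: no three of $\bm_1,\dots,\bm_5$ lie on a common plane $P$ through the origin. This is automatic from a Witt-index consideration. Since $Q$ is nondegenerate (as $\det Q=1$) and indefinite in three variables, its signature is $(2,1)$ or $(1,2)$, hence its Witt index equals $1$. Therefore no $2$-plane is totally isotropic, i.e. $Q|_P \not\equiv 0$ for every plane $P$, and a nonzero quadratic form on $\bR^2$ has at most two isotropic directions. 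Three coplanar isotropic lines are thus impossible.

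\textbf{Passing to the limit.} With the hypothesis verified, set $R := \max_i \|\bm_i\| + 10$ and apply Lemma \ref{lem:effecivekernel}. The key observation is that, as written, the integral quadratic form $Q'$ produced in the proof of that lemma is constructed algebraically from $\bm_1,\dots,\bm_5$ alone (through $\gamma$, the integers $a_i,b_i$, the cross product $\ba\times\bb$, and $\operatorname{adj}(\gamma)$), and in particular is independent of the tolerance $\eps$. Since $|Q(\bm_i)|=0<\eps$ for any $\eps\in(0,1)$, the bound $\|Q-\rho Q'\|\ll \eps R^{10}$ may be driven to $0$ by sending $\eps\to 0$, yielding $Q = \rho Q'$ exactly. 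This contradicts the irrationality of $Q$.

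\textbf{Isotropic planes.} For planes, I would dualize. By definition, a plane $P\subset \bR^3$ is isotropic for $Q$ iff a nonzero normal vector $v_P$ satisfies $Q^*(v_P)=0$, so isotropic rational planes of $Q$ correspond bijectively to isotropic rational lines of $Q^*$. The dual satisfies $\det Q^*=1$, is indefinite of the same signature as $Q$, and is irrational iff $Q$ is (since $A^{-1}=\frac{1}{\det A}\operatorname{adj}(A)$ preserves rationality of the associated matrix up to a nonzero scalar). Applying the line bound already proved to $Q^*$ gives at most four isotropic rational lines of $Q^*$, hence at most four isotropic rational planes of $Q$.

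\textbf{Main obstacle.} There is no genuine obstacle; the only point that requires a careful reading is that the construction of $Q'$ in Lemma \ref{lem:effecivekernel} does not depend on $\eps$, which is what licenses passing to the limit and upgrading the approximate identity $\|Q-\rho Q'\| \ll \eps R^{10}$ to the exact equality $Q=\rho Q'$ once the $Q(\bm_i)$ genuinely vanish.
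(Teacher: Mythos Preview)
Your proof is correct and follows essentially the same approach as the paper: duality reduces planes to lines of $Q^*$, five isotropic rational lines would yield five integral vectors with $Q(\bm_i)=0$ in general position, and Lemma \ref{lem:effecivekernel} then forces $Q$ to be rational. The only minor differences are that the paper verifies the no-three-coplanar condition by passing to the standard form $Q_0$ and observing that the quadric $\cH=\{Q_0=0\}$ meets any plane in at most two lines (your Witt-index formulation is equivalent), and that the paper simply writes ``$\eps=0$'' where you more carefully note that the construction of $Q'$ is independent of $\eps$ and pass to the limit.
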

\begin{proof}
    Since for irrational $Q$ the dual quadratic form $Q^*$ is also irrational, by duality it suffices to show the statement for $\Del_Q$-rational lines. Let $Q=Q_0^g$ for some $g\in G$. Suppose for contradiction that there are five isotropic rational lines through the origin. Then we may find $\bm_1,\ldots,\bm_5\in\bZ^3\setminus\set{0}$ for which $Q(\bm_i)=Q_0(g\bm_i)=0$ for $1\leq i\leq 5$ and no pair among these vectors lie on the same line. 
    
    Note that $g\bm_i$'s are on the light cone $\cH:=\set{v\in\mathbb{R}^3:Q_0(v)=0}$, and the intersection of $\cH$ and any plane through the origin is the union of at most two lines through the origin. Thus, no three of $g\bm_1,\ldots,g\bm_5$ lie on the same plane through the origin. Then Lemma~\ref{lem:effecivekernel} with $\eps=0$ contradicts the assumption that $Q$ is irrational.
\end{proof}

\begin{defi}
    For $\eta>0$ and $M>1$ we say that a vector $v\in\bR^3$ is $(\eta,M)$-quasi-null if $|Q_0(v)|<\eta\|v\|^{-50M}$. We denote by $\cH_{\eta,M}$ the set of $(\eta,M)$-quasi-null vectors, i.e. $$\cH_{\eta,M}:=\set{v\in\bR^3: |Q_0(v)|<\eta\|v\|^{-50M}}.$$
\end{defi}

The following lemma asserts that for a quadratic form $Q$ of Diophantine type $M$, there cannot be many $(\eta,M)$-quasi-null vectors in $\Del_Q$.

\begin{lem}\label{lem:sixplanes}
    Let $M>1$. For an indefinite ternary quadratic form $Q$ of Diophantine type $M$ with $\operatorname{det}Q=1$, there exists $0<\eta<1$ such that the following holds. For any $R>10$ the set $\set{v\in \Del_Q\cap \cH_{\eta,M}: R\leq \|v\|<R^2}$ is contained in at most six planes.
\end{lem}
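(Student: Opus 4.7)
The plan is to argue by contradiction: fix $\eta>0$ small (to be chosen at the end) and assume there is some $R>10$ for which the set $S_R := \set{v \in \Del_Q \cap \cH_{\eta,M} : R \leq \|v\| < R^2}$ is not contained in any union of six planes through the origin. The goal is to extract from $S_R$ five vectors, no three of which lie on a common plane through the origin, and then apply Lemma~\ref{lem:effecivekernel} to manufacture a rational approximation to $Q$ so sharp that it violates the Diophantine-type-$M$ hypothesis.

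First I would extract the five generic vectors inductively. Since $S_R$ is not contained in a single plane, I can pick $w_1, w_2, w_3 \in S_R$ that are linearly independent. Since $S_R$ is not contained in the three planes $\spa(w_i,w_j)$ with $\{i,j\}\subset\{1,2,3\}$, I can pick $w_4\in S_R$ outside all of them. Finally, since $S_R$ is not contained in the $\binom{4}{2}=6$ planes $\spa(w_i,w_j)$ with $\{i,j\}\subset\{1,2,3,4\}$, I can pick $w_5\in S_R$ outside all of them. This last step is exactly where the ``six planes'' hypothesis is used. By construction any three of $w_1,\dots,w_5$ are linearly independent.

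Next, writing $Q = Q_0\circ g$ so that $\Del_Q = g\bZ^3$, set $v_i := g^{-1}w_i \in \bZ^3\setminus\set{0}$. Linear genericity is preserved by $g^{-1}$, and we have $\|v_i\| \leq \|g^{-1}\|_{\textrm{op}} R^2$ as well as $|Q(v_i)| = |Q_0(w_i)| < \eta\|w_i\|^{-50M} \leq \eta R^{-50M}$. Applying Lemma~\ref{lem:effecivekernel} with $\eps := \eta R^{-50M}$ and with the role of $R$ there played by $\|g^{-1}\|_{\textrm{op}}R^2$ produces a nonzero integral ternary quadratic form $Q'$ and $\rho=(\det Q')^{-1/3}$ satisfying $\|Q-\rho Q'\| \ll \eta R^{20-50M}$ and $\|Q'\| \ll R^{28}$, with implied constants depending only on $\|g^{-1}\|_{\textrm{op}}$.

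On the other hand, Diophantine type $M$ gives $\|Q-\rho Q'\| > c\|Q'\|^{-M} \gg R^{-28M}$ for a positive constant depending on $c$, $M$, and $\|g^{-1}\|_{\textrm{op}}$. Reconciling the two estimates forces $\eta \gg R^{22M-20}$; since $M>1$ yields $22M-20>2$ and $R>10$, the right-hand side is bounded below by an explicit positive constant $\eta_0$ depending only on $M$, $c$, and $\|g^{-1}\|_{\textrm{op}}$. Choosing $\eta<\min(\eta_0,1)$ at the outset produces the desired contradiction. The main ``obstacle'' is purely numerical: one must check that the exponent $50M$ built into the definition of $\cH_{\eta,M}$ is large enough to absorb both the $R^{10}$ loss in Lemma~\ref{lem:effecivekernel} (further amplified by $R_{\text{Lem}} = \|g^{-1}\|_{\textrm{op}}R^2$) and the Diophantine allowance $R^{-28M}$; this reduces to $22M>20$, i.e., $M>10/11$, which is amply met by $M>1$ and presumably motivates the constant $50$ in the definition.
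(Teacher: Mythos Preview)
Your proof is correct and follows essentially the same approach as the paper's: assume for contradiction that the set is not contained in six planes, extract five vectors no three of which are coplanar, pull back via $g^{-1}$ to integral vectors, and apply Lemma~\ref{lem:effecivekernel} to contradict the Diophantine type-$M$ assumption for small $\eta$. Your version is in fact a bit more explicit than the paper's in two places: you spell out the inductive extraction of the five generic vectors (making clear why exactly six planes is the threshold), and you carry out the final exponent check $22M-20>0$ that the paper leaves implicit.
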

\begin{proof}
    Suppose for contradiction that for any $0<\eta<1$ there exists $R=R(
    \eta
    )>10$ such that the set $$\set{v\in \Del_Q\cap \cH_{\eta,M}: R\leq \|v\|<R^2}$$ is not contained in any union of six planes through the origin. Then we can find five nonzero vectors $\bv_1,\ldots,\bv_5\in \Del_Q\cap\cH_{\eta,M}$ for which no three of them lie on the same plane through the origin and $R\leq\|\bv_i\|<R^2$ for $1\leq i\leq 5$. Let $Q=Q_0^g$ for some $g\in G$. Then there are nonzero integral vectors $\bm_1,\ldots,\bm_5$ such that $\bv_i=g\bm_i$ and for $1\leq i\leq 5$. Since $Q(\bm_i)=Q_0(\bv_i)$ and $\bv_i\in\cH_{\eta,M}$, we have
    $$|Q(\bm_i)|=|Q_0(\bv_i)|\leq \eta\|\bv_i\|^{-50M}\leq \eta R^{-50M}$$
    and $\|\bm_i\|\leq \|g^{-1}\|\|\bv_i\|< \|g^{-1}\|R^2$ for $1\leq i\leq 5$. By Lemma~\ref{lem:effecivekernel} there exists a nonzero integral ternary quadratic form $Q'$ satisfying $$\|Q-\rho Q'\|\ll (\eta R^{-50M})(\|g^{-1}\|R^2)^{10}= \eta \|g^{-1}\|R^{-50M+20}$$ and $\|Q'\|\leq 10^6\|g^{-1}\|^{14}R^{28}$, where $\rho=(\operatorname{det}Q')^{-\frac{1}{3}}$. For any $c>0$ this implies $$\|Q-\rho Q'\|\leq \eta \|g^{-1}\|R^{-28M}\leq c\|Q'\|^{-M},$$ provided $\eta$ is sufficiently small. However, this contradicts that $Q$ is of Diophantine type $M$. This completes the proof.
\end{proof}

\begin{lem}\label{lem:qunatitativenondegeneracy}
    Let $Q$ be an indefinite ternary quadratic form with $\operatorname{det}Q=1$. Let $L\subset\mathbb{R}^3$ be a plane, and let $\bm_1,\bm_2,\bm_3\in\mathbb{Z}^3\in L$ be integral vectors such that $\bm_i$ and $\bm_j$ are not on the same line for all $1\leq i\neq j\leq 3$. For $0\leq \epsilon<1$ and $R>1$ if $|Q(\bm_i)|\leq \epsilon$ and $R<\|\bm_i\|<R^2$ for all $1\leq i\leq 3$, then $\epsilon\gg R^{-64}$ holds, where the implied constant depends on $Q$. 
\end{lem}
\begin{proof}
    Let us take a $\mathbb{Z}$-basis $\set{\bn_1,\bn_2}$ of $L\cap\mathbb{Z}^3$. As $\|\bm_i\|\leq R^2$ for $1\leq i\leq 3$, we may choose $\set{\bn_1,\bn_2}$ so that $\|\bn_1\|,\|\bn_2\|\leq R^2$. For $1\leq i\leq 3$ we may write $\bm_i=a_{i1}\bn_1+a_{i2}\bn_2$ for some $a_{i1},a_{i2}\in\mathbb{Z}$ with $|a_{i1}|,|a_{i2}|\ll R^2$. Then we have $Q(\bm_i)=a_{i1}^2Q(\bn_1)+2a_{i1}a_{i2}Q(\bn_1,\bn_2)+a_{i2}^2Q(\bn_2)$ for $1\leq i\leq 3$, i.e.
    $$\left(\begin{matrix}
        a_{11}^2 & a_{11}a_{12} & a_{12}^2 \\ a_{21}^2 & a_{21}a_{22} & a_{22}^2\\ a_{31}^2 & a_{31}a_{32} & a_{32}^2
    \end{matrix}\right) \left(\begin{matrix}Q(\bn_1) \\ 2Q(\bn_1,\bn_2)\\ Q(\bn_2)
    \end{matrix}\right)=\left(\begin{matrix}Q(\bm_1) \\ Q(\bm_2)\\ Q(\bm_3)
    \end{matrix}\right).$$
    Let $\gamma=\left(\begin{matrix}
        a_{11}^2 & a_{11}a_{12} & a_{12}^2 \\ a_{21}^2 & a_{21}a_{22} & a_{22}^2\\ a_{31}^2 & a_{31}a_{32} & a_{32}^2
    \end{matrix}\right)$. Note that $\|\gamma\|\ll R^4$, $\|\operatorname{adj}(\gamma)\|\ll R^8$, and moreover, by a projective version of the Van der Monde matrix we have a formula
    $$\operatorname{det}(\gamma)=\prod_{1\leq i\neq j\leq 3}(a_{i1}a_{j2}-a_{i2}a_{j1})\in\mathbb{Z}\setminus\set{0}.$$
     Here, $\operatorname{det}\gamma\neq 0$ is by the assumption that $\bm_i$ and $\bm_j$ are not on the same line for all $1\leq i< j\leq 3$. It follows that $|Q(\bn_1)|,|Q(\bn_1,\bn_2)|,|Q(\bn_2)|$ are bounded by $$\ll \|\gamma^{-1}\|\max_{1\leq i\leq 3}|Q(\bm_i)|\leq \|\operatorname{adj}(\gamma)\|\max_{1\leq i\leq 3}|Q(\bm_i)|\ll R^8\epsilon.$$

    Let $\bn_3=\bn_1\times\bn_2$ and let $\gamma'\in\operatorname{Mat}_{3,3}(\mathbb{Z})$ be the matrix whose columns are $\bn_1,\bn_2,\bn_3$. For $Q_1=Q\circ(\gamma')^{-1}$ we have $Q_1(\be_i,\be_j)=Q(\bn_i,\bn_j)$ for all $1\leq i,j\leq 3$. Note that $\|\gamma'\|\ll R^4$, $\|\operatorname{adj}(\gamma')\|\ll R^8$, hence $\|(\gamma')^{-1}\|\leq |\operatorname{det}\gamma'|^{-1}\|\operatorname{adj}(\gamma')\|\ll R^8$. This implies that $|Q_1(\be_i,\be_j)|\leq \|Q\circ (\gamma')^{-1}\|\ll \|(\gamma')^{-1}\|^2\ll R^{16}$ for all $1\leq i,j\leq 3$. We also know that $|Q_1(\be_i,\be_j)|=|Q(\bn_i,\bn_j)|\ll R^8\epsilon$ for $1\leq i,j\leq 2$, so $ |\operatorname{det}Q_1|\ll R^{40}\epsilon$. Therefore we get
    $$1=|\operatorname{det}Q|=|\operatorname{det}Q_1||\operatorname{det}\gamma'|^2\ll (R^{40}\epsilon)(R^{12})^{2}=R^{64}\epsilon.$$    
\end{proof}

Combining this with Lemma \ref{lem:sixplanes}, we obtain the following.

\begin{lem}\label{lem:twelvelines}
    Let $M>1$. For an indefinite ternary quadratic form $Q$ of Diophantine type $M$ with $\operatorname{det}Q=1$, there exists $0<\eta<1$ such that the following holds. For any $R>10$ the set $\set{v\in \Del_Q\cap \cH_{\eta,M}: R\leq \|v\|<R^2}$ is contained in at most $12$ lines.
\end{lem}
\begin{proof}
    According to Lemma~\ref{lem:sixplanes} the set $\set{v\in \Del_Q\cap \cH_{\eta,M}: R\leq \|v\|<R^2}$ is contained in at most six planes.
\end{proof}

\subsection{Margulis functions excluding quasi-null vectors}
To improve the exponent $\lambda$ in the moment of the Margulis function $\int_{-1}^{1}\alpha(a_tu_r\Del_Q)^{\lambda}dr$ beyond $1$, one needs to exclude the contribution of quasi-null vectors in $\Del_Q$. We introduce the following modification of $\alpha$ excluding isotropic or quasi-null vectors. We define $\widehat{\alpha}_1,\widehat{\alpha}_2,\widehat{\alpha}:H \times X\to(0,\infty)$ by $$\widehat{\alpha}_1(g;\Delta):=\sup\set{\|gv\|^{-1}:v\in\Delta\setminus\cH},\quad\widehat{\alpha}_2(g;\Delta):=\widehat{\alpha}_1(g^*;\Delta^*),$$
$$\widehat{\alpha}(g;\Del)=\max\set{\widehat{\alpha}_1(g;\Delta),\widehat{\alpha}_2(g;\Delta)}.$$

For $\eta>0$ and $M>1$ we also define $\widehat{\alpha}_{1,\eta,M},\widehat{\alpha}_{2,\eta,M},\widehat{\alpha}_{\eta,M},\widehat{\alpha}_{\eta,M}':H \times X\to (0,\infty)$ by
\eqlabel{eq:alphahat12def}{\widehat{\alpha}_{1,\eta,M}(g;\Delta):=\sup\set{\|gv\|^{-1}:v\in \Delta\setminus \cH_{\eta,M}},\quad \widehat{\alpha}_{2,\eta,M}(g;\Delta):=\widehat{\alpha}_{1,\eta,M}(g^*;\Delta^*),}
\eqlabel{eq:alphahatdef}{
    \widehat{\alpha}_{\eta,M}(g;\Delta):=\max\set{\widehat{\alpha}_{1,\eta,M}(g;\Delta),\widehat{\alpha}_{2,\eta,M}(g;\Delta)},}
\eqlabel{eq:alphahat'def}{\widehat{\alpha}'_{\eta,M}(g;\Delta):=\max\set{\widehat{\alpha}_{\eta,M}(g;\Delta),\alpha(g\Delta)^{0.9}.}}
%Note that $\cH$ and $\cH_{\eta,M}$ are invariant under dual operation as the standard quadratic form $Q_0$ satisfies $Q_0=Q_0^*$. 
By definitions, for any $\eta$, $M$, and $\Del$ we have
$$\widehat{\alpha}(g;\Delta)\leq \alpha(g\Del)\leq \widehat{\alpha}(g;\Delta)+\!\!\!\sup_{v\in \Del\cap\cH\setminus\set{0}}\!\!\!\|gv\|^{-1}+\!\!\!\sup_{v\in \Del^{*}\cap\cH\setminus\set{0}}\!\!\!\|g^*v\|^{-1},$$
$$\widehat{\alpha}_{\eta,M}(g;\Delta)\leq \widehat{\alpha}(g;\Delta)\leq \alpha(g\Del)\leq \widehat{\alpha}_{\eta,M}(g;\Delta)+\!\!\!\!\!\!\sup_{v\in \Del\cap\cH_{\eta,M}\setminus\set{0}}\!\!\!\!\!\!\|g^*v\|^{-1}+\!\!\!\!\!\!\sup_{v\in \Del^{*}\cap\cH_{\eta,M}\setminus\set{0}}\!\!\!\!\!\!\|gv\|^{-1},$$
$$ \widehat{\alpha}_{\eta,M}'(g;\Delta)\leq \widehat{\alpha}_{\eta,M}(g;\Delta)+\!\!\!\!\!\!\sup_{\-v\in \Del\cap\cH_{\eta,M}\setminus\set{0}}\!\!\!\!\!\!\|gv\|^{-0.9}+\!\!\!\!\!\!\sup_{\-v\in \Del^{*}\cap\cH_{\eta,M}\setminus\set{0}}\!\!\!\!\!\!\|g^*v\|^{-0.9}.$$

Let $J:=\left(\begin{matrix}
     & & 1\\ & -1 & \\ 1 & &
\end{matrix}\right)\in H$. Then we have $J=J^{-1}=\transp{ J}=J^*$. We observe that
$$Ja_tJ^{-1}=a_t^*,\quad Ju_rJ^{-1}=u_r^*,\quad J\transp{u_r}J^{-1}=u_r^{-1}=(\transp{u_r})^*$$
for any $t,r\in\mathbb{R}$, hence $JgJ^{-1}=g^*$ holds for any $g\in H$. We also observe that $J$ preserves $\|\cdot\|$ and $Q_0(\cdot)$, since $Jv=(v_3,-v_2,v_1)$ for $v=(v_1,v_2,v_3)\in\mathbb{R}^3$. It follows that $J$ preserves $\cH$ and $\cH_{\eta,M}$ for any $\eta>0$ and $M>1$.

We readily see from these properties that for any $g\in H$ and $\Delta\in X$
\eq{\begin{aligned}
    \widehat{\alpha}_1(g;\Delta)&=\sup\set{\|gv\|^{-1}:v\in\Delta\setminus\cH}\\&=\sup\set{\|g^*Jv\|^{-1}:v\in\Delta\setminus\cH}\\&=\sup\set{\|g^*v\|^{-1}:v\in J\Delta\setminus\cH}\\&=\widehat{\alpha}_2(g;J\Delta^*),
\end{aligned}}
hence
$$\widehat{\alpha}(g;\Delta)=\max\set{\widehat{\alpha}_1(g;\Delta),\widehat{\alpha}_2(g;\Delta)}=\max\set{\widehat{\alpha}_1(g;\Delta),\widehat{\alpha}_1(g;J\Delta^*)}=\widehat{\alpha}(g;J\Delta^*).$$
Similarly, we also have
$$\widehat{\alpha}_{1,\eta,M}(g;\Delta)=\widehat{\alpha}_{2,\eta,M}(g;J\Delta^*), \quad \widehat{\alpha}_{\eta,M}(g;\Delta)=\widehat{\alpha}_{\eta,M}(g;J\Delta^*), \quad \widehat{\alpha}_{\eta,M}'(g;\Delta)=\widehat{\alpha}_{\eta,M}'(g;J\Delta^*)$$
for any $\eta>0$, $M>1$, $g\in H$ and $\Delta\in X$.

%The following uniform boundedness of the moments of $\widehat{\alpha}_{\eta,M}$ is the main technical point of this paper.
%\begin{thm}[Main moment esitmates]\label{thm:mainthm}
%    For any $M>1$ there exists a constant $\del=\del(M)>0$ such that the following holds. If $Q$ is of type $M$, then for any $\eta>0$ 
%    \eqlabel{eq:unipotenthighermoment}{\sup_{t>0}\int_{-1}^{1}\widehat{\alpha}_{\eta,M}(a_tu_r;\Del_Q)^{1+\del}dr<\infty.}
%\end{thm}

%\begin{proof}[Proof of Theorem \ref{thm:Margulismoments} assuming Theorem \ref{thm:mainthm}]
%    Without loss of generality, we may assume $M>50$. Let $Q=Q_0^g$ for some $g\in G$. Since $Q(\bZ^3)=Q_0(\Del_Q)$ and $Q^*(\bZ^3)=Q_0(\Del_Q^{*})$, the condition \eqref{eq:isotropicDiophantine} implies that there exists $\eta'>0$ depending on $Q$ such that $$\|Q_0(v)\|\geq \eta'\|v\|^{-M}$$ for any $v\in (\Del_Q\cup\Del_Q^{*})\setminus\set{0}$, i.e. the set $(\Del_Q\cup\Del_Q^{*}\setminus\set{0})\cap\cH_{\eta',\frac{M}{50}}$ is empty. It follows that $\alpha(h\Delta_Q)= \widehat{\alpha}_{\eta',\frac{M}{50}}(h;\Delta_Q)$ holds for any $h\in H$, hence Theorem \ref{thm:Margulismoments} is proved.
%\end{proof}

\subsection{Outline of the proof of Theorem \ref{thm:mainthm}}\label{subsec:outline}
Let $0<\del\leq 0.01$. In Section~3 we define a function $\phi_\del:\bR^3\setminus\set{0}\to(0,\infty]$ by $\phi_\del(v)=\kappa(v)^{-2\del}\|v\|^{-1-\del}$, where roughly speaking $\kappa:\bR^3\setminus\set{0}\to[0,1]$ measures the distance to the surface $\cH$. Then $\phi_\del$ satisfies $\phi_\del(v)\geq \|v\|^{-1-\del}$ for all $v\in \bR^3\setminus\set{0}$, and the following contraction inequality holds (Proposition~\ref{eq:linearcontractionhypothesis}):
\eqlabel{eq:linearcontractionhypothesisintro}{\int_{-1}^{1}\phi_\del(a_su_rv)dr\leq 80\del^{-1}e^{-\del s}\phi_\del(v)}
for any $s\geq 1$ and $v\in\bR^3\setminus\set{0}$.

In Section~4, using the function $\phi_\del$, for $\eta>0$ and $M>1$ we define a modified height function $\widetilde{\alpha}_{\del,\eta,M}:H\times X\to[1,\infty]$ satisfying $$\widetilde{\alpha}_{\del,\eta,M}(g;\Del)\geq \max\set{\widehat{\alpha}_{\eta,M}(g;\Del)^{1+\del},\alpha(g\Del)^{1-3\del}}$$ for any $(g,\Del)\in H\times X$. We then establish the following subharmonic estimate (Proposition~\ref{prop:contractionhypothesis}), using \eqref{eq:linearcontractionhypothesisintro}. If $\del>0$ is a sufficiently small constant depending on $M$, then for any large $s$ it holds that
\eqlabel{eq:contractionhypothesisintro}{\int_{-1}^{1}\widetilde{\alpha}_{\del,\eta,M}(a_su_rg;\Del)dr\leq e^{-\frac{\del}{2} s}\widetilde{\alpha}_{\del,\eta,M}(g;\Del)+e^{9s},}
unless $(g,\Del)$ lies within a certain set $\cE_{s,\eta,M}\subset H\times X$, where the fibers over $H$ have small volume.

To derive \eqref{eq:contractionhypothesisintro} from \eqref{eq:linearcontractionhypothesisintro} we follow closely the strategy of \cite{EMM98}. However, here we further need to control the effect of the factor $\kappa(gv)^{-2\del}$, which is large when the vector $gv\in g\Del$ is close to the surface $\cH$. The estimate for such contribution is provided by a supremum-version of the contraction inequality for $\phi_\del$ (Proposition~\ref{eq:refinedcontractionhypothesis}), where the supremum is taken over the vectors very close to $\cH$. The main ingredient of the proof of Proposition~\ref{eq:refinedcontractionhypothesis} is a quantitative version of the following simple geometric observation: for any plane in $\bR^3$, the intersection of the plane and $\cH$ is the union of at most two lines in $\bR^3$.

We remark that \eqref{eq:contractionhypothesisintro} holds only for $(g;\Del)\notin \cE_{s,\eta,M}$, whereas the classical contraction hypotheses hold for every $\Del$. Thus, in Section~5 we control the amount of time that the orbit $(a_tu_r;\Del)$ stays in the set $\cE_{s,\eta,M}$ when $t$ is sufficiently larger than $s$. Namely, in Proposition \ref{prop:avoidanceMargulis'} we show that
\eqlabel{eq:avoidanceMargulisintro}{\int_{-1}^{1}\widehat{\alpha}_{\eta,M}'(a_tu_r;\Delta)^{1+\del}\mathds{1}_{\cE_{s,\eta,M}}(a_tu_r;\Delta)dr\ll e^{-10s}}
for any $s\geq 1$ and $t\geq 4DMs$, where $D$ is an absolute constant. The proof of the estimate \eqref{eq:avoidanceMargulisintro} relies on an effective avoidance principle of Sanchez and Seong~\cite[Theorem 2]{SS22} (see also \cite[Proposition 4.6]{LMW22}).

In Section~6, we combine the subharmonic estimate \eqref{eq:contractionhypothesisintro} from Section~3 and the avoidance estimate \eqref{eq:avoidanceMargulisintro} from Section 4, hence completing the proof of Theorem~\ref{thm:mainthm}.

\subsection{Sketch of the proof of Theorem~\ref{thm:quantitativeoppenheim0} from Theorem~\ref{thm:mainthm}}\label{sec:momentfromcounting}
Let $f$ be a bounded function defined on $\bR^3\setminus\set{0}$ vanishing outside a compact set. For $\eta>0$, $M>1$, and $\Del\in X$ we also denote by $Y_{\eta,M}(\Del)$ the set of vectors $v\in \Del$ satisfying
$$\|Q_0(v)\|\geq \eta\|v\|^{-50M},\qquad \|Q_0(v\times w)\|\geq \eta\|v\times w\|^{-50M}$$
for any $w\in \Del$ with $v\times w\neq0$.

For $\Del\in X$ and $g\in G$ let
$$\widetilde{f}(\Del):=\sum_{v\in\Del}f(v),\qquad\widehat{f}(g;\Del):=\sum_{v\in gY(\Del)}f(v).$$
We recall the Siegel mean value formula:
$$\int_X \widetilde{f}(\Del)dm_X(\Del)=\int_{\bR^3}f(v)dv$$
for any bounded and compactly supported function $f$.

For $\eta>0$ and $M>1$ we also define $\widehat{f}_{\eta,M}$ by
$$\widehat{f}_{\eta,M}(g;\Del):=\sum_{v\in gY_{\eta,M}(\Del)}f(v).$$

\begin{lem}[Lipschitz principle]\label{lem:Lipschitzprinciple}
    Let $\eta>0$ and $M>1$. For any $f\in C_0^{\infty}(\bR^3\setminus\set{0})$ there exists a constant $c=c(f)$ such that for all $\Del\in X$ and $g\in G$
    $$\widetilde{f}(\Del)\leq c\alpha(\Del), \quad \widehat{f}(g;\Del)\leq c\widehat{\alpha}(g;\Del), \quad \widehat{f}_{\eta,M}(g;\Del)\leq c\widehat{\alpha}_{\eta,M}(g;\Del).$$
\end{lem}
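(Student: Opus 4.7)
The plan is to bound each sum by a counting estimate for lattice points in a ball and then relate the count to the appropriate height function via a successive-minima argument. Since $f\in C_0^\infty(\bR^3\setminus\set{0})$, it is supported in an annulus $\set{v:r_1\leq\|v\|\leq r_2}$ for some $0<r_1<r_2$, so writing $C_0:=\|f\|_\infty$ one immediately has
$$\widetilde{f}(\Del)\leq C_0\av{\Del\cap B(r_2)},\quad \widehat{f}(g;\Del)\leq C_0\av{gY(\Del)\cap B(r_2)},\quad \widehat{f}_{\eta,M}(g;\Del)\leq C_0\av{gY_{\eta,M}(\Del)\cap B(r_2)}.$$

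For the first inequality I would apply Minkowski's theorem on the successive minima $\lambda_1\leq\lambda_2\leq\lambda_3$ of the unimodular lattice $\Del\subset\bR^3$ (so that $\lambda_1\lambda_2\lambda_3\asymp 1$) together with the Mahler duality $\lambda_3(\Del)\asymp 1/\lambda_1(\Del^*)=\alpha_2(\Del)$. Combining the standard pigeonhole bound $\av{\Del\cap B(R)}\ll\prod_{i=1}^{3}(1+R/\lambda_i)$ with a case analysis on which of the $\lambda_i$'s are less than $R$ yields $\av{\Del\cap B(R)}\ll(1+R^3)\alpha(\Del)$, which gives the first inequality with $c(f)\ll C_0(1+r_2^3)$.

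The second and third inequalities require the same strategy, but now restricted to those vectors $v\in\Del$ which avoid all isotropic (resp.\ $(\eta,M)$-quasi-null) rational $1$- or $2$-dimensional sublattices; by the definitions of $\widehat{\alpha}$ and $\widehat{\alpha}_{\eta,M}$, the shortest such $v$ (together with the analogous supremum over $\Del^*$) satisfies $\|gv\|\geq\widehat{\alpha}(g;\Del)^{-1}$, resp.\ $\|gv\|\geq\widehat{\alpha}_{\eta,M}(g;\Del)^{-1}$. The real obstacle, and where I expect the actual work to lie, is refining the count to establish
$$\av{gY(\Del)\cap B(R)}\ll(1+R^3)\widehat{\alpha}(g;\Del),\qquad \av{gY_{\eta,M}(\Del)\cap B(R)}\ll(1+R^3)\widehat{\alpha}_{\eta,M}(g;\Del),$$
rather than the weaker bound in terms of $\alpha(g\Del)$ that follows trivially from the first inequality; the gain becomes genuine when $g\Del$ has short vectors whose preimages under $g$ are null or quasi-null. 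To carry this out I would argue case by case on the non-null (resp.\ non-quasi-null) successive minima of $g\Del$, using the dual lattice $g\Del^*$ and the geometric observation, already invoked in Lemma \ref{lem:fourisotropic}, that the null cone $\cH$ meets any plane through the origin in at most two lines; the quasi-null version needs in addition the containment of quasi-null lattice points in at most six planes provided by Lemma \ref{lem:sixplanes}, which is what keeps the excluded contribution under control when $\cH$ is thickened to $\cH_{\eta,M}$.
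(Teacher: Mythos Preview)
Your treatment of the first inequality is correct and amounts to Schmidt's Lipschitz principle, which is exactly what the paper cites.

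For the second and third inequalities the paper says only that they are ``direct modifications'' of Schmidt's argument, and the point you appear to be missing is that the definitions of $Y(\Del)$ and $Y_{\eta,M}(\Del)$ already encode the two-dimensional information you are trying to recover from the geometry of $\cH$. A vector $v\in Y(\Del)$ satisfies both $Q_0(v)\neq 0$ and $Q_0(v\times w)\neq 0$ for \emph{every} $w\in\Del$ with $v\times w\neq 0$; the analogous two-sided condition defines $Y_{\eta,M}(\Del)$. Consequently, in Schmidt's case analysis: if the points of $gY(\Del)\cap B(r_2)$ lie on a single line, its primitive generator lies in $\Del\setminus\cH$; if two independent such points span a $\Del$-rational plane, the primitive dual vector normal to that plane automatically lies in $\Del^*\setminus\cH$ (respectively $\Del^*\setminus\cH_{\eta,M}$). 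In either case the relevant covolume is bounded below via $\widehat{\alpha}(g;\Del)$ (respectively $\widehat{\alpha}_{\eta,M}(g;\Del)$) straight from the definition. No separate appeal to how $\cH$ meets planes, or to Lemma~\ref{lem:sixplanes}, is needed; this is why the paper treats the extension as routine.

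There is also a concrete gap in your plan for the third inequality: Lemma~\ref{lem:sixplanes} applies only to the specific lattice $\Del_Q$ attached to a form $Q$ of Diophantine type $M$, whereas the lemma you are proving is asserted for every $\Del\in X$ and every $g\in G$. So even if that six-plane containment were relevant to the counting, it is not available here.
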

\begin{proof}
    The first inequality $\widetilde{f}(\Del)\leq c\alpha(\Del)$ is the Lipschitz principle of Schmidt, and the other two inequalities are direct modifications; see \cite[Lemma 2]{Sch68}.
\end{proof}

In \S7.2 we will prove the following proposition, using Theorem~\ref{thm:mainthm}.

\begin{prop}[Equidistribution of Siegel transform]\label{prop:unboundedRatner}
    Let $Q$ be an indefinite quadratic form with $\operatorname{det}Q=1$, which is not EWA. For any bounded Riemann integrable function $f$ compactly supported on $\bR^3\setminus\set{0}$ and $\nu\in C(K)$ we have
    $$\lim_{t\to\infty}\int_{K}\widehat{f}(a_tk;\Del_Q)\nu(k)dk=\int_{\bR^3} f(v)dv\int_K \nu(k)dk.$$
\end{prop}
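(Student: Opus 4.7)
The plan is to promote the Ratner--Shah equidistribution of bounded test functions on $X$ to the unbounded Siegel-type transform $\widehat{f}$, using Theorem \ref{thm:mainthm} to control the resulting tail. This is modeled on \cite[\S 3.4--3.5]{EMM98}, with the additional twist that the available higher-moment bound is only for $\widehat{\alpha}_{\eta,M}$, which excludes the quasi-null vectors.

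Since $Q$ is not EWA, it is of Diophantine type $M$ for some $M>1$; fix such $M$ and let $\delta=\delta(M)>0$ be given by Theorem \ref{thm:mainthm}. As explained in \S 2.3, the unipotent moment bound is equivalent, via the $KAN$-decomposition of $H$, to
$$\sup_{t>0}\int_K\widehat{\alpha}_{\eta,M}(a_tk;\Del_Q)^{1+\delta}\,dk<\infty\qquad\text{for every }\eta>0.$$
The key auxiliary observation is that $H=\SO(Q_0)$ preserves both $Q_0$ and the cross-product condition (indeed, $g\in H$ implies $g^{-T}\in H$, so $Q_0(gv\times gw)=Q_0(g^{-T}(v\times w))=Q_0(v\times w)$), hence $gY(\Del)=Y(g\Del)$ for $g\in H$, and therefore $\widehat{f}(a_tk;\Del_Q)=F(a_tk\Del_Q)$ where $F:X\to[0,\infty]$ is the intrinsic function $F(\Lambda):=\sum_{v\in Y(\Lambda)}f(v)$. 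Since $Q$ is irrational, $\overline{H\Del_Q}=X$ by Margulis's proof of the Oppenheim conjecture; combining Ratner's measure classification with Shah's equidistribution theorem for expanding translates of compact unipotent pieces yields
$$\lim_{t\to\infty}\int_K\Phi(a_tk\Del_Q)\,\nu(k)\,dk=\int_X\Phi\,dm_X\cdot\int_K\nu(k)\,dk$$
for every bounded continuous $\Phi$ on $X$. Applying this to a smooth truncation $F^{(L)}:=F\cdot\chi_L(\alpha)$ cut off at $\alpha\le L$, and observing that $F\le\widetilde{f}\in L^1(X,m_X)$ together with the fact that $Y(\Lambda)=\Lambda\setminus\{0\}$ for $m_X$-a.e.\ $\Lambda$ (isotropic lattice vectors form a null condition), Siegel's mean value formula and dominated convergence give $\int_X F^{(L)}\,dm_X\to\int_{\bR^3}f(v)\,dv$ as $L\to\infty$.

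The remaining and crucial step is the uniform tail bound
$$\sup_{t>0}\int_K F(a_tk\Del_Q)\cdot\mathbbm{1}_{\alpha(a_tk\Del_Q)>L}\cdot|\nu(k)|\,dk\longrightarrow 0\quad\text{as }L\to\infty,$$
which is the main obstacle: the direct majorization $F\le c_f\alpha$ combined with the $L^p$-bound for $\alpha$ (valid only for $p<1$) is not strong enough. Since the summation defining $F$ excludes isotropic lattice vectors, one has $F(\Lambda)\le c_f\sup\{\|v\|^{-1}:v\in\Lambda\setminus\cH\}$, and I would split
$$\sup\{\|a_tkv\|^{-1}:v\in\Del_Q\setminus\cH\}\le\widehat{\alpha}_{\eta,M}(a_tk;\Del_Q)+\sup\{\|a_tkv\|^{-1}:v\in\Del_Q\cap(\cH_{\eta,M}\setminus\cH)\}.$$
The first term contributes $O(L^{-\delta})$ to the tail by Chebyshev and the $(1+\delta)$-moment bound. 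The quasi-null term is the delicate one, and is handled using Lemma \ref{lem:sixplanes}, which at each dyadic scale confines the quasi-null lattice vectors of $\Del_Q$ to at most six planes; a dyadic decomposition in $\|v\|$ combined with the avoidance-style estimates of Section 5 applied to each such plane yields a bound that tends to zero as $\eta\to 0$. Coupling $\eta=\eta(L)\to 0$ with $L\to\infty$ then yields the required uniform tail estimate and completes the proof.
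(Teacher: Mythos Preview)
Your overall strategy matches the paper's: truncate, apply the Dani--Margulis equidistribution to the bounded piece, and control the tail by splitting $\widehat f\le c\,\widehat\alpha_{\eta,M}+\text{(quasi-null term)}$, with the first handled by Theorem~\ref{thm:mainthm} and Chebyshev. Two concrete points need correction.

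First, the Schmidt-type majorization must involve the dual lattice. The bound $F(\Lambda)\le c_f\sup\{\|v\|^{-1}:v\in\Lambda\setminus\cH\}$ is false as stated: if $g\Del_Q$ has a very short isotropic vector $w$ and a moderate non-isotropic $w'$ with the plane $\langle w,w'\rangle$ \emph{not} isotropic, then there are $\sim\|w\|^{-1}\|w'\|^{-1}$ points of $Y(\Del_Q)$ in $\supp f$, whereas the shortest non-isotropic vector has norm $\sim\|w'\|$. The correct Lipschitz bound is $\widehat f(g;\Del)\le c\,\widehat\alpha(g;\Del)$ with $\widehat\alpha$ ranging over $(\Del\cup\Del^*)\setminus\cH$; in the example it is the short dual vector $w\times w'\notin\cH$ that supplies the matching upper bound. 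Consequently the quasi-null term is $\sup\{\|a_tu_rv\|^{-1}:v\in(\Del_Q\cup\Del_Q^*)\cap(\cH_{\eta,M}\setminus\cH)\}$, and Lemma~\ref{lem:sixplanes} must be invoked for both $\Del_Q$ and $\Del_Q^*$.

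Second, and more substantively, the tool you name for the quasi-null term is wrong. The results of Section~5 are measure estimates $m_\bR\{r:a_tu_r\Del_Q\in\cK(s,\eps)\}\ll\eps^{1/D_3}$ describing avoidance of neighbourhoods of closed $H$-orbits; they do not yield $\int\sup_{v\text{ in a plane}}\|a_su_rv\|^{-1}dr\ll R^{-1}$. What is actually used (packaged as Proposition~\ref{prop:quasinullcontribution}) is Proposition~\ref{eq:refinedcontractionhypothesis} from \S\ref{sec:MargulisInequality} with $\del=0$: for any $\cT$ contained in a single plane,
\[
\int_{-1}^{1}\sup_{v\in\cT}\|a_su_rv\|^{-1}\,dr\le 200\sup_{v\in\cT}\|v\|^{-1}.
\]
Combined with Lemma~\ref{lem:sixplanes} and a dyadic sum in $\|v\|$, this gives $\int_{-1}^1\sup_{\|v\|>R_0}\|a_tu_rv\|^{-1}dr\ll R_0^{-1}$ uniformly in $t$; the finitely many non-isotropic quasi-null vectors with $\|v\|\le R_0$ each satisfy $\int\|a_tu_rv\|^{-1}dr\to 0$ as $t\to\infty$. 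In particular $\eta$ is fixed once by Lemma~\ref{lem:sixplanes} and the quasi-null integral tends to $0$ with $t$, so no coupling $\eta(L)\to 0$ is needed. Your variant (shrink $\eta$ so that no quasi-null vector has $\|v\|\le R_0$) can be made to work, but since the moment bound in Theorem~\ref{thm:mainthm} depends on $\eta$, you must fix $\eta$ before sending $L\to\infty$.
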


 Then Theorem~\ref{thm:quantitativeoppenheim0} is deduced from Proposition \ref{prop:unboundedRatner} by an argument identical to that used in \cite[\S3.4, \S3.5]{EMM98} to deduce \cite[Theorem 2.1]{EMM98} from \cite[Theorem 2.3]{EMM98}. Roughly speaking, for the region $\cW\subset \bR^3\setminus\set{0}$ defined by
 $$\cW:=\set{v\in\mathbb{R}^3: a<Q_0(v)<b, \;\tfrac{1}{2}\leq \|v\|\leq 1, \;v_1>0,\;\tfrac{1}{2}v_1\leq |v_2|\leq v_1},$$
 using the fact that $K$ preserves both $\|\cdot\|$ and $Q_0(\cdot)$ one can calculate that
 $$ce^t\lim_{t\to\infty}\int_{K}\widehat{\mathds{1}_{\cW}}(a_tk;\Del_Q)dk\approx\begin{cases}
        1 \quad \textrm{ if } \tfrac{e^t}{2}\leq\|v\|\leq e^t\;,a<Q_0(v)<b,\\ 0 \quad\textrm{ otherwise}
    \end{cases} $$
    for some constant $c>0$, where $\mathds{1}_{\cW}$ is the characteristic function on $\cW$. This implies that
 $$\#\set{v\in \bZ^3: \frac{1}{2}e^{t}\leq\|v\|\leq e^t \textrm{ and }a<Q(v)<b}\approx ce^t\lim_{t\to\infty}\int_{K}\widehat{\mathds{1}_{\cW}}(a_tk;\Del_Q)dk.$$ Based on this relation, Theorem~\ref{thm:quantitativeoppenheim0} follows from Proposition~\ref{prop:unboundedRatner} with $f=\mathds{1}_{\cW}$ and $\nu=\mathds{1}_K$.

\section{Linear actions on $\bR^3$}

\subsection{Known estimates}
In this subsection, we review several estimates from \cite{EMM98}, along with certain variants derived from them. We first recall the following contraction inequality from \cite[Lemma 5.1]{EMM98}.
\begin{lem}[Contraction for $\|\cdot\|^{-\lambda}$]\label{eq:triviallinearcontraction0}
    Let $\frac{1}{2}\leq \lambda\leq 1$. For any $t>0$ and $w\in\bR^3\setminus\set{0}$ we have
    $$\int_{-1}^{1}\|a_tu_rw\|^{-\lambda}dr\leq 100e^{-\frac{(1-\lambda) t}{3}}\|w\|^{-\lambda}.$$   
\end{lem}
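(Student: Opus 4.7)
The strategy is to exploit the explicit form
\begin{equation*}
a_t u_r w = \bigl(e^t p(r),\; p'(r),\; e^{-t} w_3\bigr),
\end{equation*}
where $p(r) := w_1 + r w_2 + \tfrac{r^2}{2} w_3$, and $\|\cdot\|$ is the supremum of the absolute values of the three entries. By the scaling $\|a_t u_r(cw)\| = |c|\cdot\|a_t u_r w\|$, both sides of the inequality are homogeneous of degree $-\lambda$ in $w$, so I may assume throughout that $\|w\| = 1$.

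My plan is then to control the integral through the sublevel sets $E_\mu := \{r \in [-1,1] : \|a_t u_r w\| \leq \mu\}$ together with the layer-cake identity
\begin{equation*}
\int_{-1}^{1}\|a_t u_r w\|^{-\lambda}\,dr \;=\; \lambda \int_{0}^{\infty} \mu^{-\lambda-1}\,|E_\mu|\,d\mu.
\end{equation*}
The inclusion $r \in E_\mu$ forces simultaneously $|p(r)| \leq \mu e^{-t}$, $|p'(r)| \leq \mu$, and $|w_3| \leq \mu e^t$. I would then split into two cases according to the size of $|w_3|$. If $|w_3| \geq \mu^{1/3} e^{t/3}$, the linear bound on $p'(r) = w_2 + r w_3$ alone confines $r$ to an interval of length at most $2\mu/|w_3| \leq 2\mu^{2/3} e^{-t/3}$. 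If instead $|w_3| < \mu^{1/3} e^{t/3}$, the normalization $\|w\|=1$ forces $\max(|w_1|,|w_2|) \geq 1/2$, so that $p(r)$ is essentially linear with unit-sized leading coefficient, and the constraint $|p(r)| \leq \mu e^{-t}$ again confines $r$ to a short interval. Combined, these give $|E_\mu| \ll \min\bigl(1,\; \mu^{2/3} e^{-t/3}\bigr)$, and substituting this into the layer-cake integral gives the stated decay $e^{-(1-\lambda)t/3}$ after a direct computation (splitting at the crossover $\mu \sim e^{t/2}$).

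The main obstacle is the intermediate regime where $|w_3|$ is close to the threshold $\mu^{1/3}e^{t/3}$, so that neither the quadratic-slope nor the linear-dominance estimate is individually sharp. To handle this I would exploit the identity
\begin{equation*}
p'(r)^2 - 2 w_3\, p(r) \;=\; w_2^2 - 2 w_1 w_3 \;=\; Q_0(w),
\end{equation*}
which reflects the $Q_0$-invariance of the $H$-action on $\mathbb{R}^3$, and which lets one trade bounds on $p(r)$ for bounds on $p'(r)$ and vice versa. A secondary subtlety is convergence of the layer-cake integral near $\mu = 0$, which is ensured by the positive lower bound $\min_{r\in[-1,1]} \|a_t u_r w\| > 0$ (guaranteed whenever $\|w\|=1$), so that $|E_\mu| = 0$ for $\mu$ below that threshold.
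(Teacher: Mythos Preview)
The paper does not give its own proof here; it simply cites \cite[Lemma~5.1]{EMM98}, whose argument is a direct case analysis on the integral (in the same spirit as the paper's own proof of Proposition~\ref{eq:linearcontractionhypothesis}). Your layer-cake approach is a legitimate alternative organization, but the sketch has two real gaps.

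First, the assertion in your Case~2 that the normalization $\|w\|=1$ forces $\max(|w_1|,|w_2|)\geq 1/2$ is false. Whenever $\mu>e^{-t}$ your threshold $\mu^{1/3}e^{t/3}$ exceeds $1$, so \emph{every} $w$ with $|w_3|\leq 1$ falls into Case~2 --- in particular $w=(0,0,1)$, for which $\max(|w_1|,|w_2|)=0$ and $p(r)=r^2/2$ is genuinely quadratic, not ``essentially linear''. The correct split is at a \emph{fixed} threshold such as $|w_3|=1/2$: when $|w_3|\geq 1/2$ you have both $|E_\mu|\leq 4\mu$ (from $|p'(r)|\leq\mu$, linear with slope $\geq 1/2$) and $|E_\mu|\leq C(\mu e^{-t})^{1/2}$ (quadratic sublevel bound from $|p(r)|\leq\mu e^{-t}$ with leading coefficient $\geq 1/4$), and interpolating via $\min(a,b)\leq a^{1/3}b^{2/3}$ gives $|E_\mu|\ll\mu^{2/3}e^{-t/3}$; when $|w_3|<1/2$ your linear-dominance argument is then valid since $\max(|w_1|,|w_2|)=1$. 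The $Q_0$-identity you propose does not rescue the original split, since it yields nothing when $Q_0(w)=0$.

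Second, the convergence at $\mu\to 0$ is not a ``secondary subtlety'' for $\lambda\in[2/3,1]$: the integral $\int_0^{e^{t/2}}\mu^{-\lambda-1}\cdot\mu^{2/3}e^{-t/3}\,d\mu$ diverges at $0$ in that range, so mere positivity of $\min_r\|a_tu_rw\|$ is insufficient --- you need the \emph{quantitative} cutoff $E_\mu=\emptyset$ for $\mu<\tfrac{1}{3}e^{-t}$. This follows because if $r_0\in E_\mu$ then Taylor expansion of $p$ at $r_0$ gives $\|w\|\leq |p(r_0)|+|p'(r_0)|+|w_3|\leq \mu e^{-t}+\mu+\mu e^t\leq 3\mu e^t$. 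Once this cutoff is inserted as the lower limit of the layer-cake integral, the computation does go through (and in fact produces a stronger exponent than $(1-\lambda)/3$).
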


Utilising the arguments in \cite[Lemma 5.1]{EMM98} (or the proof of Proposition~\ref{eq:linearcontractionhypothesis} in this section alternatively) one can also obtain:
\begin{lem}[Bounded expansion for $\|\cdot\|^{-1-\del}$]\label{eq:triviallinearcontraction}
    Let $0<\del<\frac{1}{2}$. For any $t>0$ and $w\in\bR^3\setminus\set{0}$ we have
    $$\int_{-1}^{1}\|a_tu_rw\|^{-1-\del}dr\leq 40\del^{-1}e^{\del t}\|w\|^{-1-\del}.$$
\end{lem}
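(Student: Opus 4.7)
Set $w=(w_1,w_2,w_3)$ and $W := \|w\|$. A direct computation gives
\begin{equation*}
a_t u_r w = \bigl(e^t p(r),\ w_2 + r w_3,\ e^{-t} w_3\bigr), \qquad p(r) := w_1 + r w_2 + \tfrac{r^2}{2} w_3,
\end{equation*}
so the function $g(r) := \|a_t u_r w\|$ satisfies $g(r) \geq \max\bigl(e^t |p(r)|,\ |w_2 + rw_3|,\ e^{-t}|w_3|\bigr)$. My plan is to split into cases according to which coordinate of $w$ dominates, handling the case $|w_3|\geq W/4$ by a Cavalieri-type argument applied to the quadratic $p$, and reducing the remaining cases to essentially trivial pointwise bounds on $g$.

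In the main case $|w_3|\geq W/4$, the quadratic $p(r)$ has leading coefficient $w_3/2$, so completing the square yields the elementary estimate
\begin{equation*}
|\{r \in [-1,1] : |p(r)| < \tau\}| \leq 4\sqrt{\tau/|w_3|}.
\end{equation*}
Combined with the pointwise lower bound $g(r) \geq e^{-t}|w_3|$, which forces $\{r: g(r)\leq u\} = \emptyset$ for $u < e^{-t}|w_3|$, Cavalieri's principle gives
\begin{equation*}
\int_{-1}^{1} g(r)^{-1-\del}\,dr = (1+\del)\int_{e^{-t}|w_3|}^{\infty} u^{-2-\del}\,\bigl|\{r : g(r)\leq u\}\bigr|\,du.
\end{equation*}
Splitting the $u$-range at the crossover $u^* := e^t |w_3|/4$ between the quadratic bound $4\sqrt{ue^{-t}/|w_3|}$ and the trivial bound $2$, and evaluating the antiderivative of $u^{-3/2-\del}$ on $[e^{-t}|w_3|,u^*]$, one finds that the contribution at the lower endpoint produces a factor $(e^{-t}|w_3|)^{-1/2-\del}$ which, combined with the prefactor $(e^t|w_3|)^{-1/2}$, telescopes precisely to a constant multiple of $\del^{-1} e^{\del t}|w_3|^{-1-\del}\leq 4^{1+\del}\del^{-1}e^{\del t}W^{-1-\del}$; the tail integral above $u^*$ is $O((e^t W)^{-1-\del})$ and hence negligible.

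The remaining cases are handled by pointwise bounds. If $|w_3| < W/4$ and $|w_2|\geq W/2$, then $|w_2 + rw_3| \geq W/2 - W/4 = W/4$ for $|r|\leq 1$, so $g(r)\geq W/4$. If $|w_3| < W/4$ and $|w_2| < W/2$, then necessarily $|w_1| = W$, and $|rw_2 + \tfrac{r^2}{2}w_3| \leq W/2 + W/8 < 5W/8$ for $|r|\leq 1$, giving $|p(r)|\geq 3W/8$ and hence $g(r)\geq 3e^tW/8$. In both subcases the integral is bounded by a constant times $W^{-1-\del}$, which is well below the main-case bound. The main obstacle is the bookkeeping in the main case: one must verify that the truncation at $u = e^{-t}|w_3|$ converts the otherwise-divergent $u^{-3/2-\del}$ integrand into precisely the factor $\del^{-1}e^{\del t}$, and check that the absolute constants can be compiled into the claimed value $40$.
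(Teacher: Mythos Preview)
Your approach is correct in outline and takes a genuinely different route from the paper. The paper does not prove this lemma directly; it refers to \cite[Lemma 5.1]{EMM98} and to the proof of Proposition~\ref{eq:linearcontractionhypothesis}, both of which proceed by factoring the quadratic $p(r)$ in the coordinates $\rho(w)=-w_2/w_3$, $\kappa_0(w)=Q_0(w)/w_3^2$ and integrating $\|a_tu_rw\|^{-1-\del}$ case by case (splitting according to whether the roots of $p$ lie in $[-1,1]$, whether $\kappa_0$ is large, etc.). Your layer-cake argument via the sublevel-set bound $|\{|p|<\tau\}|\le 4\sqrt{\tau/|w_3|}$ is more conceptual and avoids that case analysis; it would also generalize more readily to higher-degree representations.

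Two small points. First, the antiderivative of $u^{-3/2-\del}$ contributes the factor $(1/2+\del)^{-1}\le 2$, not $\del^{-1}$; so your main-case bound is actually $O(1)\,e^{\del t}|w_3|^{-1-\del}$, which is \emph{stronger} than the $\del^{-1}$ claimed in the lemma. Second, precisely because your constant is absolute rather than $\del^{-1}$, the bookkeeping with the threshold $|w_3|\ge W/4$ produces a total constant of size roughly $24\cdot 4^{1+\del}$, which exceeds $40\del^{-1}$ once $\del$ gets close to $1/2$. For the paper's purposes (only $\del\le 0.01$ is ever used) this is harmless; if you want the full range $0<\del<1/2$ with the constant $40$, you should also exploit the middle coordinate $|w_2+rw_3|$ in the main case (as the paper's case analysis effectively does) rather than only the pair $e^t|p(r)|$ and $e^{-t}|w_3|$.
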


We note that the analogous inequalities to those in Lemma~\ref{eq:triviallinearcontraction0} and Lemma~\ref{eq:triviallinearcontraction} also hold when $a_tu_rw$ is replaced by $a_t^*u_r^*w$.

As in \cite[\S 5]{EMM98}, Lemma~\ref{eq:triviallinearcontraction0} and Lemma~\ref{eq:triviallinearcontraction} indeed imply the following estimates, respectively, without any assumption on $\Delta\in X$.

\begin{lem}[Subharmonic estimate for $\alpha^{\lambda}$]\label{eq:triviallinearcontraction1}
    Let $\frac{1}{2}\leq \lambda< 1$. For any $t>0$ and $\Del\in X$ we have
    $$\int_{-1}^{1}\alpha(a_tu_r\Del)^{\lambda}dr\leq 100e^{-\frac{(1-\lambda) t}{3}}\alpha(\Del)^{\lambda}+e^{4t}.$$
\end{lem}

\begin{lem}[Superharmonic estimate for $\alpha^{1+\del}$]\label{eq:trivialcontractionhypothesis}
    Let $0<\del<\frac{1}{2}$. For any $\Del\in X$ and $t>0$ we have
    $$\int_{-1}^{1}\alpha(a_tu_r\Del)^{1+\del}dr\leq 400\del^{-1}e^{\del t}\alpha(\Del)^{1+\del}+e^{4t}.$$
\end{lem}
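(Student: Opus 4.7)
The plan is to deduce the estimate from the single-vector bound in Lemma \ref{eq:triviallinearcontraction} by the standard technique of \cite[\S 5]{EMM98}, which exchanges the supremum defining $\alpha$ with the integration in $r$ at the cost of handling a bounded family of ``dominant'' lattice vectors.

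First, using \eqref{eq:alphadef}, one rewrites
\[\alpha(a_tu_r\Del)^{1+\del}=\sup\bigl\{\|a_tu_rv\|^{-1-\del}:v\in(\Del\cup\Del^*)\setminus\set{0}\bigr\},\]
where the $\Del^*$ part is handled analogously via the transpose-inverse dynamics $(a_tu_r)^{-*}$, which satisfies the analogue of Lemma \ref{eq:triviallinearcontraction}. For each $r\in[-1,1]$ I would pick a vector $v(r)\in\Del\cup\Del^*$ realising this supremum up to a factor of $2$. A Minkowski-type reduction-theory argument then partitions $[-1,1]$ into an absolutely bounded number of subintervals $I_j$, on each of which $v(r)=v_j$ is constant, so that $\alpha(a_tu_r\Del)\leq 2\|a_tu_rv_j\|^{-1}$ for all $r\in I_j$.

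On each interval, applying Lemma \ref{eq:triviallinearcontraction} to $v_j$ gives
\[\int_{I_j}\alpha(a_tu_r\Del)^{1+\del}\,dr\leq 2^{1+\del}\int_{-1}^{1}\|a_tu_rv_j\|^{-1-\del}\,dr\leq 80\del^{-1}e^{\del t}\|v_j\|^{-1-\del}.\]
Summing over $j$, the $O(1)$ truly short ``dominant'' vectors (those with $\|v_j\|^{-1}$ comparable to $\alpha(\Del)$) produce the main term of order $\del^{-1}e^{\del t}\alpha(\Del)^{1+\del}$, matching the stated $400\del^{-1}e^{\del t}\alpha(\Del)$ after the routine absorption of the extra $\alpha(\Del)^{\del}$ factor into the $e^{4t}$ error using the regime split $\alpha(\Del)\leq e^{3t}$ versus $\alpha(\Del)>e^{3t}$. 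The contribution of the longer $v_j$'s (which are short only after contraction by $a_tu_r$) is dominated by a volume count for lattice points in the tube $\bigcup_{|r|\leq 1}u_{-r}a_{-t}B(0,1)\subset\R^3$, and is swallowed by $e^{4t}$.

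The chief technical obstacle is the partitioning step: quantifying the number of ``simultaneously dominant'' vectors along $[-1,1]$. This rests on the fact that a unimodular lattice in $\R^3$ contains only boundedly many nonzero vectors in any ball of fixed radius, together with a continuity argument tracking how the almost-shortest vector of $a_tu_r\Del$ evolves as $r$ varies over the compact interval $[-1,1]$.
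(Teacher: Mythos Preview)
Your route diverges from the argument of \cite[\S5]{EMM98} that the paper invokes, and its central step does not go through as stated. The claim that $[-1,1]$ partitions into an \emph{absolutely bounded} number of subintervals $I_j$ on which the dominant vector $v(r)$ is constant is false: as $t\to\infty$ the number of primitive $v\in\Del$ that realize $\alpha_1(a_tu_r\Del)$ for some $r\in[-1,1]$ grows without bound. You correctly flag this as the chief obstacle, but the justification you offer (``boundedly many nonzero vectors in any ball of fixed radius'') only controls those $v_j$ with $\|v_j\|$ comparable to $\alpha(\Del)^{-1}$, and the proposed fallback---a lattice-point count in the tube $\bigcup_{|r|\le1}u_{-r}a_{-t}B(0,1)$---is too coarse: summing $40\del^{-1}e^{\del t}\|v_j\|^{-1-\del}$ over all lattice points in that tube does not obviously stay below $e^{4t}$, and no mechanism is given to exploit the disjointness of the $I_j$.

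The EMM98 mechanism avoids partitioning $[-1,1]$ altogether. One fixes a single subspace $L_0$ realizing $\alpha_i(\Del)$ and observes that, pointwise in $r$, either $\alpha_i(a_tu_r\Del)=d(a_tu_rL_0)^{-1}$ or a second competing $\Del$-rational subspace $L$ exists; in the latter case the submodularity inequality $d(L+L_0)\,d(L\cap L_0)\le d(L)\,d(L_0)$ bounds $\alpha_i^2$ by $\alpha_{i-1}\alpha_{i+1}$. Integrating, the first alternative invokes Lemma~\ref{eq:triviallinearcontraction} \emph{once} (for the fixed $L_0$), while the second produces cross terms, and the resulting finite system in $\alpha_0,\alpha_1,\alpha_2,\alpha_3$ (with $\alpha_0=\alpha_3\equiv1$) closes. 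This is exactly the structure visible in the paper's own Section~\ref{sec:MargulisInequality} (Cases~1 and~2 in the proof of Proposition~\ref{prop:contractionhypothesis}). As a side note, the right-hand side $\alpha(\Del)$ in the lemma should read $\alpha(\Del)^{1+\del}$ (cf.\ its use in \eqref{eq:recalltrivialcontraction}); your regime split cannot repair the discrepancy, since for $\alpha(\Del)$ large compared to any power of $e^{t}$ the Lipschitz bound $\alpha(a_tu_r\Del)\le 3e^t\alpha(\Del)$ already violates the stated inequality.
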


Note that for any $t>0$, $r\in\mathbb{R}$, and $w\in\mathbb{R}^3$ the following log-Lipschitz property holds:
\eqlabel{eq:Lipschitz0}{(3e^t)^{-1}\|w\|\leq\|a_tu_rw\|\leq 3e^t\|w\|.} We also note expansion bounds for $\widehat{\alpha}_{\eta,M}$ and $\widehat{\alpha}_{\eta,M}'$, which are deduced from Lemma~\ref{eq:triviallinearcontraction}.

\begin{lem}[An expansion bound for $\widehat{\alpha}_{\eta,M}^{1+\del}$]\label{eq:trivialcontractionhypothesishat}
    Let $0<\eta<1$, $M>1$, and $0<\del<0.01$. For any $g\in H$ and $\Del\in X$ and $t>0$ we have
    $$\int_{-1}^{1}\widehat{\alpha}_{\eta,M}(a_tu_rg;\Del)^{1+\del}dr\leq 80\del^{-1}e^{\del t}\widehat{\alpha}_{\eta,M}(g;\Del)^{1+\del}+80\delta^{-1}\alpha(g\Delta)^{0.9}+\tfrac{1}{2}e^{6t}.$$
\end{lem}
\begin{proof}
    Let $v\in\Delta\setminus\cH_{\eta,M}$ be a vector such that $\widehat{\alpha}_{1,\eta,M}(g;\Delta)=\|gv\|^{-1}$. Suppose that there exists $w\in \Delta\setminus\cH_{\eta,M}$ with $v\wedge w\neq 0$ and $\|gw\|\leq (3e^t)^{2}\|gv\|$. Then we have
    $$\alpha_2(g\Delta)^{-1}\leq \|gv\wedge gw\|\leq \|gv\|\|gw\|\leq (3e^t)^2\widehat{\alpha}_{1,\eta,M}(g;\Delta)^{-2},$$ hence $\widehat{\alpha}_{1,\eta,M}(g;\Delta)\leq 3e^t\alpha_2(g\Delta)^{\frac{1}{2}}$. It follows that $$\widehat{\alpha}_{1,\eta,M}(a_tu_rg;\Delta)\leq (3e^t)\widehat{\alpha}_{1,\eta,M}(g;\Delta)\leq (3e^t)^2\alpha_2(g\Delta)^{\frac{1}{2}}$$ for any $r\in[-1,1]$.
    
    If there is no such $w\in \Delta\setminus\cH_{\eta,M}$, i.e. $\|gw\|>(3e^t)^2\|gv\|$ for all $w\in \Delta\setminus\cH_{\eta,M}$ with $v\wedge w\neq 0$, then we see that $\|a_tu_rgv\|\leq \|a_tu_rgw\|$ for all $w\in \Delta\setminus\cH_{\eta,M}$, hence $\widehat{\alpha}_{1,\eta,M}(a_tu_rg;\Delta)=\|a_tu_rv\|^{-1}$.
    
    Combining these two cases, we deduce that
    \eq{\begin{aligned}
        \widehat{\alpha}_{1,\eta,M}(a_tu_rg;\Delta)^{1+\delta}&\leq \|a_tu_rgv\|^{-1-\delta}+\left(9e^{2t}\alpha_2(g\Delta)^{\frac{1}{2}}\right)^{1+\delta}\\&\leq \|a_tu_rgv\|^{-1-\delta}+20\delta^{-1}\alpha(g\Delta)^{0.9}+\tfrac{1}{8}e^{6t},
    \end{aligned}}
    using Young's inequality for products, i.e. $xy\leq \frac{1}{p}x^p+\frac{1}{q}y^q$ for $p,q> 1$ with $\frac{1}{p}+\frac{1}{q}=1$ and $x,y\geq 0$. Applying Lemma~\ref{eq:triviallinearcontraction} we have
    \eq{\begin{aligned}
        \int_{-1}^{1}\widehat{\alpha}_{1,\eta,M}(a_tu_rg;\Delta)^{1+\delta}dr&\leq \int_{-1}^{1}\|a_tu_rgv\|^{-1-\delta}dr+40\delta^{-1}\alpha(g\Delta)^{0.9}+\tfrac{1}{4}e^{6t}\\&\leq 40\delta^{-1}e^{\delta t}\|gv\|^{-1-\delta}+40\delta^{-1}\alpha(g\Delta)^{0.9}+\tfrac{1}{4}e^{6t}.\\&=40\delta^{-1}e^{\delta t}\widehat{\alpha}_{1,\eta,M}(g;\Delta)^{1+\delta}+40\delta^{-1}\alpha(g\Delta)^{0.9}+\tfrac{1}{4}e^{6t}.
    \end{aligned}}
    By a similar argument for $\widehat{\alpha}_{2,\eta,M}(g;\Delta)$ we also have
    $$\int_{-1}^{1}\widehat{\alpha}_{2,\eta,M}(a_tu_rg;\Delta)^{1+\delta}dr\leq  40\delta^{-1}e^{\delta t}\widehat{\alpha}_{2,\eta,M}(g;\Delta)^{1+\delta}+40\delta^{-1}\alpha(g\Delta)^{0.9}+\tfrac{1}{4}e^{6t},$$
    hence the desired estimate follows, as $\displaystyle\sup_{i=1,2}\widehat{\alpha}_{i,\eta,M}= \widehat{\alpha}_{\eta,M}\leq \widehat{\alpha}_{1,\eta,M}+\widehat{\alpha}_{2,\eta,M}$.
\end{proof}

\begin{lem}[Superharmonic estimate for $\widehat{\alpha}_{\eta,M}'^{1+\del}$]\label{eq:trivialcontractionhypothesishat'}
    Let $0<\eta<1$, $M>1$, and $0<\del<0.01$. For any $g\in H$ and $\Del\in X$ and $t>10$ we have
    $$\int_{-1}^{1}\widehat{\alpha}_{\eta,M}'(a_tu_rg;\Del)^{1+\del}dr\leq 400\delta^{-1}e^{\delta t}\widehat{\alpha}'_{\eta,M}(g;\Delta)^{1+\delta}+e^{6t}.$$
\end{lem}
\begin{proof}
    Using Lemma~\ref{eq:trivialcontractionhypothesishat} and Lemma~\ref{eq:triviallinearcontraction1} we deduce 
\eq{\begin{aligned}
    \int_{-1}^{1}\widehat{\alpha}_{\eta,M}'(a_tu_rg;\Del)^{1+\del}dr&\leq \int_{-1}^{1}\widehat{\alpha}_{\eta,M}(a_tu_rg;\Del)^{1+\del}dr+\int_{-1}^{1}\alpha(a_tu_rg\Del)^{0.9(1+\delta)}dr\\&\leq \big(80\del^{-1}e^{\del t}\widehat{\alpha}_{\eta,M}(g;\Del)^{1+\del}+80\delta^{-1}\alpha(g\Delta)^{0.9}+\tfrac{1}{2}e^{6t}\big)\\&\qquad\qquad\qquad\qquad\qquad+\big(100e^{-0.01t}\alpha(g\Delta)^{0.9(1+\delta)}+e^{4t}\big)
    \\&\leq 400\delta^{-1}e^{\delta t}\widehat{\alpha}'_{\eta,M}(g;\Delta)^{1+\delta}+e^{6t}.
\end{aligned}}
\end{proof}

\subsection{Auxiliary functions}
For $w=(w_1,w_2,w_3)\in \bR^3\setminus\set{0}$ we define three quantifiers. For $w$ with $w_2=w_3=0$ we set $\rho(w)=\infty$, $\kappa_0(w)=\infty$, and $\kappa(w)=1$. Assuming $(w_2,w_3)\neq (0,0)$ we define
\eqlabel{eq:rhodef}{\rho(w):=-\frac{w_2}{w_3}\in[-\infty,\infty],}
\eqlabel{eq:kappa0def}{\kappa_0(w):=\frac{Q_0(w)}{w_3^2}\in[-\infty,\infty],}
\eqlabel{eq:kappadef}{\kappa(w):=\begin{cases}
    |\kappa_0(w)| & \text{ if } |\kappa_0(w)|<1 \text{ and } |\rho(w)|< 2,\\
    1 & \text{ otherwise}.
\end{cases}}

We first observe that $\rho(a_tu_rw)=e^t(\rho(w)-r)$ and  $\kappa_0(a_tu_rw)=e^{2t}\kappa_0(w)$ for any $t,r\in\bR$. By a straightforward matrix calculation, we also observe that
\eqlabel{eq:matrixexpansion}{a_tu_rw=w_3\left(\begin{matrix}
    e^t(w_1+rw_2+\tfrac{1}{2}r^2w_3)\\ w_2+rw_3 \\ e^{-t}w_3
\end{matrix}\right)=w_3\left(\begin{matrix}
    \frac{1}{2}e^t\left\{(r-\rho(w))^2-\kappa_0(w)\right\}\\ r-\rho(w) \\ e^{-t}
\end{matrix}\right)}
for any $t,r\in\bR$, and $w\in\bR^2\times(\bR\setminus\set{0})$.
\begin{lem}[Growth of $\kappa$]\label{lem:kappaexpansion}
    For any $t\geq 1$, $|r|\leq 1$, and $w\in\bR^3\setminus\set{0}$ we have \eq{\kappa(a_tu_rw)\geq\begin{cases}
    1 & \text{ if } \kappa(w)\geq e^{-2t},\\
    e^{2t}\kappa(w) & \text{ if } \kappa(w)< e^{-2t}.
\end{cases}}
Furthermore, if $\kappa(a_tu_rw)<1$ then $\kappa(a_tu_rw)=e^{2t}\kappa(w)$.
\end{lem}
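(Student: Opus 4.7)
The plan is to exploit the two scaling identities noted just above the lemma, namely $\rho(a_tu_rw)=e^t(\rho(w)-r)$ and $\kappa_0(a_tu_rw)=e^{2t}\kappa_0(w)$, together with the piecewise definition of $\kappa$. Because $\kappa$ is defined to equal $|\kappa_0|$ only when \emph{both} $|\kappa_0|<1$ and $|\rho|<2$, the argument proceeds by tracking which branch of the definition applies to $w$ and to $a_tu_rw$.

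First I would establish the ``furthermore'' assertion, since it drives the rest. Assume $\kappa(a_tu_rw)<1$. By the definition of $\kappa$, this forces $|\kappa_0(a_tu_rw)|<1$ and $|\rho(a_tu_rw)|<2$. The scaling of $\kappa_0$ gives $|\kappa_0(w)|=e^{-2t}|\kappa_0(a_tu_rw)|<e^{-2t}<1$. The scaling of $\rho$ gives $|\rho(w)-r|<2e^{-t}$, and combining with $|r|\le 1$ and $t>1$ yields
\[
|\rho(w)|\le|\rho(w)-r|+|r|<2e^{-t}+1<2,
\]
since $2e^{-1}<1$. Hence $w$ also lies in the first branch, so $\kappa(w)=|\kappa_0(w)|$ and therefore $\kappa(a_tu_rw)=|\kappa_0(a_tu_rw)|=e^{2t}|\kappa_0(w)|=e^{2t}\kappa(w)$, as claimed.

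Next I would derive the two main cases from this. If $\kappa(w)\ge e^{-2t}$, suppose for contradiction that $\kappa(a_tu_rw)<1$; the furthermore part just proved gives $\kappa(a_tu_rw)=e^{2t}\kappa(w)\ge 1$, a contradiction. Thus $\kappa(a_tu_rw)\ge 1$, which combined with the obvious bound $\kappa\le 1$ forces equality. If instead $\kappa(w)<e^{-2t}$, I split on whether $\kappa(a_tu_rw)=1$ or $\kappa(a_tu_rw)<1$. In the former subcase $\kappa(a_tu_rw)=1\ge e^{2t}\kappa(w)$ since $\kappa(w)<e^{-2t}$; in the latter, the furthermore part gives equality $\kappa(a_tu_rw)=e^{2t}\kappa(w)$. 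Either way the desired inequality holds.

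The only non-formal step is the triangle-inequality bookkeeping that propagates the constraint $|\rho|<2$ from $a_tu_rw$ back to $w$, and this is precisely where the hypothesis $t>1$ (rather than merely $t>0$) is used, via $2e^{-t}+|r|<2$. Everything else reduces to unpacking the piecewise definition of $\kappa$ on the two pieces determined by the scaling identities.
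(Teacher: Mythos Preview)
Your proof is correct and uses the same ingredients as the paper's proof---the scaling identities for $\rho$ and $\kappa_0$ together with the branch definition of $\kappa$---but you organize the argument in reverse: you establish the ``furthermore'' clause first and then derive both cases from it by contradiction, whereas the paper handles the two cases directly (splitting Case~1 according to whether $|\rho(w)|\ge 2$ or $|\kappa_0(w)|\ge e^{-2t}$) and obtains the ``furthermore'' statement as a consequence of Case~1 by contraposition. The mathematical content is identical.
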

\begin{proof}
    Since $a_t$ and $u_r$ are in $H$, they stabilize $Q_0$.
    
    \textbf{Case 1}. We suppose $\kappa(w)\geq e^{-2t}$.\\In this case we have either $|\rho(w)|\geq 2$ or $|\kappa_0(w)|\geq e^{-2t}$. If $|\rho(w)|\geq 2$, then $|\rho(a_tu_rw)|\geq e^t|r-\rho(w)|=e^t(|\rho(w)|-|r|)> 2$, hence $\kappa(a_tu_rw)=1$. If $|\kappa_0(w)|\geq e^{-2t}$, then $|\kappa_0(a_tu_rw)|=e^{2t}|\kappa_0(w)|\geq1$, hence $\kappa(a_tu_rw)=1$.\\
    
    \textbf{Case 2}. We suppose $\kappa(w)< e^{-2t}$.\\
    In this case we have $\kappa(w)=|\kappa_0(w)|$, hence
\eqlabel{eq:kappaexpansion}{\kappa(a_tu_rw)\geq|\kappa_0(a_tu_rw)|=e^{2t}|\kappa_0(w)|=e^{2t}\kappa(w).}

From \textbf{Case 1} we also see that $\kappa(a_tu_rw)<1$ implies $\kappa(w)<1$, hence $\kappa(a_tu_rw)=|\kappa_0(a_tu_rw)|$ and $\kappa(w)=|\kappa_0(w)|$. It follows that
\eq{\kappa(a_tu_rw)=|\kappa_0(a_tu_rw)|=e^{2t}|\kappa_0(w)|=e^{2t}\kappa(w).}

\end{proof}

%Throughout this paper, for $w=(w_1,w_2,w_3)\in\mathbb{R}^3$, we denote $w^\star:=(w_3,-w_2,w_1)$. Then we have $\|w\|=\|w^\star\|$, $Q_0(w)=Q_0(w^{\star})$, and $(gw)^*=g^*w^{\star}$ for any $g\in P$ and $w\in\mathbb{R}^3$. The last relation is due to the following calculation:
%\eqlabel{eq:dualvectoraction}{(a_tu_rw)^\star=\left(\begin{matrix}
%    e^t(w_1+rw_2+\tfrac{1}{2}r^2w_3)\\ w_2+rw_3 \\ e^{-t}w_3
%\end{matrix}\right)^\star=\left(\begin{matrix}
%    e^{-t}w_3\\ -w_2-rw_3 \\ e^t(w_1+rw_2+\tfrac{1}{2}r^2w_3)
%\end{matrix}\right)=a_t^*u_r^*w^{\star}}
%for any $t\in\mathbb{R}$, $r\in\mathbb{R}$, and $w\in\mathbb{R}^3$. 

\subsection{Contraction for linear actions on $\bR^3$}
In this subsection, we construct a modified local height function $\phi_\del$ for $0<\del<0.01$ and establish the corresponding contraction inequality. As noted in Remark~\ref{rem:doublezero}, the contraction inequality for $\|\cdot\|^{-\lambda}$ in Lemma~\ref{eq:triviallinearcontraction0} is no longer valid when the exponent $\lambda>1$. In particular, we observe that
$$\lim_{t\to\infty}\int_{-1}^{1}\|a_tu_rv\|^{-\lambda}dr=\infty $$
for any $\lambda>1$, if the quadratic equation $w_1+rw_2+\tfrac{1}{2}r^2w_3$ in \eqref{eq:matrixexpansion} has a double root and satisfies $w_2+rw_3=0$ for some $r\in[-1,1]$, i.e. $\kappa_0(v)=0$ and $|\rho(w)|\leq 1$. However, if this double-zero scenario is excluded, the range of permissible exponents $\lambda$ can be refined. This observation motivates the construction of the following modified local height function $\phi_\del$ on $\mathbb{R}^3$, which satisfies $\phi_\del(w)\geq\|w\|^{-1-\delta}$ and exhibits the desired contraction property. 

For $0<\del<0.01$ let us define $\phi_\del:\bR^3\setminus\set{0}\to (0,\infty]$ by
\eqlabel{eq:phidef}{\phi_\del(w)=\kappa(w)^{-2\del}\|w\|^{-1-\del}.}
For any $0\leq \del<0.1$, $t\geq 1$, $r\in[-1,1]$, and $w\in \bR^3\setminus\set{0}$, the following log-Lipschitz property holds by \eqref{eq:Lipschitz0} and Lemma~\ref{lem:kappaexpansion}:
\eqlabel{eq:Lipschitz}{\phi_\del(a_tu_rw)\leq (3e^t)^{1+\del}\phi_\del(w).}
Moreover, if $\kappa(a_tu_rw)<1$ then
\eqlabel{eq:Lipschitz'}{\phi_\del(a_tu_rw)\geq (3e^t)^{-(1+5\del)}\phi_\del(w).}

This subsection aims to show the following contraction inequality for $\phi_\del$.

\begin{prop}[Contraction for $\phi_\del$]\label{eq:linearcontractionhypothesis}
    Let $0<\del\leq 0.01$. Then we have
    $$\int_{-1}^{1}\phi_\del(a_tu_rw)dr\leq 80\del^{-1}e^{-\del t}\phi_\del(w)$$
    for any $t\geq 1$ and $w\in\bR^3\setminus\set{0}$.
\end{prop}

\begin{proof}
Let $w=(w_1,w_2,w_3)$. If $w_2=w_3=0$ then $a_tu_rw=e^tw$ for any $r\in\bR$, hence the statement is trivial. If $w_2\neq0$ and $w_3=0$, then $\kappa(a_tu_rw)=\kappa(w)=1$ for any $r\in\bR$, and it is straightforward to see that
\eq{\begin{aligned}
    \int_{-1}^{1}\phi_\del(a_tu_rw)dr&=\int_{-1}^{1}\|a_tu_rw\|^{-1-\del}dr\\&=\int_{-1}^{1}\max\set{e^t|w_1+w_2r|,|w_2|}^{-1-\del}dr \\&\leq 10\del^{-1}e^{-t}\|w\|^{-1-\del}=10\del^{-1}e^{-t}\phi_\del(w)
\end{aligned}}
for any $t\geq 1$. We now assume $w_3\neq 0$ and let 
$$f_w(r):=(r-\rho(w))^2-\kappa_0(w)=\frac{2}{w_3}\big(w_1+w_2r+\tfrac{1}{2}w_3r^2\big).$$\\
\textbf{Case 1}. $\kappa(w)\leq e^{-t}$.
    
    By Lemma \ref{lem:kappaexpansion} we have $\kappa(a_tu_rw)\geq e^{t}\kappa(w)$ for any $t\geq 1$ and $|r|<1$. In combination with the expansion bound for $\|\cdot\|^{-1-\del}$ in Lemma \ref{eq:triviallinearcontraction} it follows that
    \eq{\begin{aligned}
        \int_{-1}^{1}\phi_\del(a_tu_rw)dr&=\int_{-1}^{1}\kappa(a_tu_rw)^{-2\del}\|a_tu_rw\|^{-1-\del}dr\\&\leq e^{-2\del t}\kappa(w)^{-2\del}\int_{-1}^{1}\|a_tu_rw\|^{-1-\del}dr\\&\leq e^{-2\del t}\kappa(w)^{-2\del}(40\del^{-1}e^{\del t}\|w\|^{-1-\del}) \\&= 40\del^{-1}e^{-\del t}\kappa(w)^{-2\del}\|w\|^{-1-\del}=40\del^{-1}e^{-\del t}\phi_\del(w).
    \end{aligned}}\\
    
    \textbf{Case 2}. $|\rho(w)|\geq 2$ and hence $\kappa(w)=1$.
    
    In this case $\kappa(a_tu_rw)=1$ for any $|r|\leq 1$ by Lemma \ref{lem:kappaexpansion}. Since $f_w'(r)=2(r-\rho(w))$, we have $|f_w'(r)|=2|r-\rho(w)|\geq \frac{1}{2}|\rho(w)|$ for any $r\in [-\frac{3}{2},\frac{3}{2}]$. In particular, $f_w'$ does not change the sign on $[-\frac{3}{2},\frac{3}{2}]$, and there is at most one zero of $f_w$ on $[-\frac{3}{2},\frac{3}{2}]$.

    \textbf{Subcase 2-1}. There is one zero of $f_w$ on $[-\frac{3}{2},\frac{3}{2}]$.

    Let $r_0\in[-\frac{3}{2},\frac{3}{2}]$ be the zero of $f_w$. Then $f_w(r_0)=0$ and $|\rho(2)|\geq 2$ imply that
    $$|w_1|=\left|r_0w_2+\frac{r_0^2}{2}w_3\right|\leq |r_0|\big(|w_2|+\frac{|w_3|}{2}\big)\leq 2|w_2|,$$
    hence $\|w\|=\max(|w_1|,|w_2|,|w_3|)\leq 2|w_2|$. We also observe that
    $$|f_w(r)|=|(r-\rho(w))^2-(r_0-\rho(w))^2|=|r-r_0||r+r_0-2\rho(w)|\geq \tfrac{1}{2}|\rho(w)||r-r_0|$$
    for any $r\in [-\frac{3}{2},\frac{3}{2}]$. It follows that
    \eq{\begin{aligned}
    \int_{-1}^{1}&\phi_\del(a_tu_rw)dr=\int_{-1}^{1}\|a_tu_rw\|^{-1-\del}dr\\&\leq\int_{e^{-t}\leq |r-r_0|<3}\left|\tfrac{1}{2}w_3e^tf_w(r)\right|^{-1-\del}dr+\int_{\substack{|r-r_0|<e^{-t},\\ |r|<1}}\left|w_3(r-\rho(w))\right|^{-1-\del}dr.
    \end{aligned}}
    The first integral is bounded by
    \eq{\begin{aligned}
        \int_{e^{-t}\leq |r-r_0|<3}\left|\tfrac{1}{2}w_3e^tf_w(r)\right|^{-1-\del}dr&\leq 2^{1+\del}|w_3e^t\rho(w)|^{-1-\del}\int_{e^{-t}\leq |r-r_0|<3}|r-r_0|^{-1-\del}dr\\&\leq 2^{2+\del}\del^{-1}e^{\del t}|w_3e^t\rho(w)|^{-1-\del}\leq 2^{2+\del}\del^{-1}e^{-t}|w_2|^{-1-\del}.
    \end{aligned}}
    The second integral is bounded by
    \eq{\begin{aligned}
        \int_{\substack{|r-r_0|<e^{-t},\\ |r|<1}}\left|\tfrac{1}{2}w_3(r-\rho(w))\right|^{-1-\del}dr&\leq 8^{1+\del}\int_{|r-r_0|<e^{-t}}|w_3\rho(w)|^{-1-\del}dr\\&\leq 10e^{-t}|w_3\rho(w)|^{-1-\del}=10e^{-t}|w_2|^{-1-\del}.
    \end{aligned}}
    since $|r-\rho(w)|\geq \frac{1}{4}|\rho(w)|$ for any $r\in[-1,1]$. Hence, for any $t>0$ we have
    $$\int_{-1}^{1}\phi_\del(a_tu_rw)dr\leq (10+2^{2+\del}\del^{-1})e^{-t}|w_2|^{-1-\del}\leq 10\del^{-1}e^{-t}\|w\|^{-1-\del}=10\del^{-1}e^{-t}\phi_\del(w).$$
    
    \textbf{Subcase 2-2}. There is no zero of $f_w$ on $[-\frac{3}{2},\frac{3}{2}]$.

    Recall that $f_w'$ does not change the sign on $[-\frac{3}{2},\frac{3}{2}]$ and $|f_w'(r)|=2|r-\rho(w)|\geq \frac{1}{2}|\rho(w)|$ for any $r\in [-\frac{3}{2},\frac{3}{2}]$. It follows that $|f_w(r)|\geq \frac{1}{4}|\rho(w)|$ for any $r\in[-1,1]$. On the other hand, we also have
    \eq{\begin{aligned}
        |w_3f_w(r)|&=2|w_1+rw_2+\tfrac{1}{2}r^2w_3|\\&\geq 2\left(|w_1|-|w_2|-\frac{|w_2|}{2|\rho(w)|}\right)\geq 2|w_1|-3|w_2|
    \end{aligned}}
    for any $r\in [-1,1]$. It follows that
    $$|w_3f_w(r)|\geq \max\set{\frac{1}{4}|w_2|,2|w_1|-3|w_2|}\geq \frac{1}{8}\max(|w_1|,|w_2|)\geq \frac{1}{8}\|w\|.$$
     We thus obtain an estimate that
    \eq{\begin{aligned}
        \int_{-1}^{1}\phi_\del(a_tu_rw)dr&=\int_{-1}^{1}\|a_tu_rw\|^{-1-\del}dr\\&\leq 2^{1+\del}\int_{-1}^{1}\left|w_3e^tf_w(r)\right|^{-1-\del}dr\\&< 32^{1+\del}e^{-(1+\del)t}\|w\|^{-1}\leq 100e^{-t}\phi_\del(w).
    \end{aligned}}\\    
    \textbf{Case 3}. $\kappa(w)\geq e^{-t}$ and $|\rho(w)|<2$.
    
    In this case $\kappa(a_tu_rw)=1$ by Lemma \ref{lem:kappaexpansion}, and $|w_2|\leq 2|w_3|$.

    \textbf{Subcase 3-1}. $\left|\kappa_0(w)\right|\geq 10$.
    
    We have $|2w_1w_3|\geq |Q_0(w)|-w_2^2\geq 10w_3^2-4w_3^2=6w_3^2$, hence $|w_1|\geq 3|w_3|$ and $\|w\|=|w_1|$. We also see $w_2^2\leq |\rho(w)|^2w_3^2\leq 4w_3^2\leq \frac{4}{3}|w_1w_3|$, hence $$|Q_0(w)|\geq |2w_1w_3|-w_2^2\geq \tfrac{2}{3}|w_1w_3|.$$ Since $(r-\rho(w))^2\leq 9$, we get
    $$|f_w(r)|\geq \left|\kappa_0(w)\right|-|r-\rho(w)|^2\geq\tfrac{1}{10}|\kappa_0(w)|= \frac{|Q_0(w)|}{10w_3^2}\geq\frac{|w_1|}{15|w_3|}$$
    for any $r\in[-1,1]$. It follows that
    \eq{\begin{aligned}
        \int_{-1}^{1}\phi_\del(a_tu_rw)dr&=\int_{-1}^{1}\|a_tu_rw\|^{-1-\del}dr\\&\leq 2^{1+\del}\int_{-1}^{1}|w_3e^tf_w(r)|^{-1-\del}dr\\&< 60^{1+\del}e^{-(1+\del)t}|w_1|^{-1-\del}\leq 100e^{-t}\|w\|^{-1-\del}=100e^{-t}\phi_\del(w).
    \end{aligned}}

    \textbf{Subcase 3-2}. $e^{-t}\leq -\kappa_0(w)<10$.
    
    We have $|2w_1w_3|\leq |Q_0(w)|+w_2^2\leq 14w_3^2$, hence $|w_1|\leq 7|w_3|$ and $\|w\|\leq 7|w_3|$. Since $|f_w(r)|\geq (r-\rho(w))^2+e^{-t}$, we see that
    \eq{\begin{aligned}
        \int_{-1}^{1}\phi_\del(a_tu_rw)dr&=\int_{-1}^{1}\|a_tu_rw\|^{-1-\del}dr\\&\leq 2^{1+\del}\int_{-1}^{1}|w_3e^tf_w(r)|^{-1-\del}dr\\&\leq 2^{1+\del}|w_3e^t|^{-1-\del}\int_{-1}^{1}\frac{dr}{\{(r-\rho(w))^2+e^{-t}\}^{1+\del}}\\&\leq 2^{1+\del}|w_3e^t|^{-1-\del}\int_{-3}^{3}\frac{dr}{(r^2+e^{-t})^{1+\del}}\\&<2^{1+\del}|w_3e^t|^{-1-\del}(\pi e^{(\frac{1}{2}+\del) t})\\&\leq 14^{1+\del}\pi e^{-\frac{t}{2}}\|w\|^{-1-\del}\leq 100e^{-\frac{t}{2}}\phi_\del(w).
    \end{aligned}}

    \textbf{Subcase 3-3}. $e^{-t}\leq \kappa_0(w)<10$.
    
    As in \textbf{Subcase 3-2} we have $\|w\|\leq 7|w_3|$. Note that $f_w(r)$ has two real roots $r_1=\rho(w)-\sqrt{\kappa_0(w)}$ and $r_2=\rho(w)+\sqrt{\kappa_0(w)}$. Then we may write $|f_w(r)|=|r-r_1||r-r_2|$. Let $I_1=[r_1-e^{-\frac{3}{2}t},r_1+e^{-\frac{3}{2}t}]$ and $I_2=[r_2-e^{-\frac{3}{2}t},r_2+e^{-\frac{3}{2}t}]$. Then we have
    \eq{\begin{aligned}    \int_{-6}^{\rho(w)}\phi_\del(a_tu_rw)dr&=\int_{-6}^{\rho(w)}\|a_tu_rw\|^{-1-\del}dr\\&\leq \int_{[-6,\rho(w)]\setminus I_1}\left|\tfrac{1}{2}w_3e^tf_w(r)\right|^{-1-\del}dr+\int_{I_1}\left|\tfrac{1}{2}w_3e^{-t}\right|^{-1-\del}dr.
    \end{aligned}}
    The first integral in the last line is bounded by
    \eq{\begin{aligned}
    &\leq 2^{1+\del}|w_3e^t|^{-1-\del}\int_{[-6,\rho(w)]\setminus I_1}|r-r_1|^{-1-\del}|r-r_2|^{-1-\del}dr\\
        &\leq 2^{1+\del}|w_3e^t|^{-1-\del}\left|\kappa_0(w)\right|^{-\frac{1+\del}{2}}\int_{[-6,\rho(w)]\setminus I_1}|r-r_1|^{-1-\del}dr\\&\leq 2^{1+\del}|w_3|^{-1-\del}e^{-\frac{1+\del}{2}t}\int_{e^{-\frac{3}{2}t}<|r|<10}|r|^{-1-\del}dr\\&< 4^{1+\del}\del^{-1}|w_3|^{-1-\del}e^{-(\frac{1}{2}-\del)t}.
    \end{aligned}}
    The second integral is bounded as follows:
    $$2^{1+\del}\int_{I_1}|w_3e^{-t}|^{-1-\del}dr<  4^{1+\del}|w_3|^{-1-\del}e^{-(\frac{1}{2}-\del)t}.$$
    Thus, we get the following estimate:
    $$ \int_{-6}^{\rho(w)}\phi_\del(a_tu_rw)dr\leq 30\del^{-1}e^{-\frac{t}{3}}\|w\|^{-1-\del}\leq 40\del^{-1}e^{-\frac{t}{3}}\phi_\del(w).$$
    Similarly, we also have
    $$\int_{\rho(w)}^{6}\phi_\del(a_tu_rw)dr\leq 40\del^{-1}e^{-\frac{t}{3}}\phi_\del(w),$$
    hence
    \eq{\begin{aligned}
        \int_{-1}^{1}\phi_\del(a_tu_rw)dr&\leq \int_{-6}^{\rho(w)}\phi_\del(a_tu_rw)dr+\int_{\rho(w)}^{6}\phi_\del(a_tu_rw)dr\\&\leq 80\del^{-1}e^{-\frac{t}{3}}\phi_\del(w).
    \end{aligned}}
\end{proof}

We define $\kappa^\star:\mathbb{R}^3\setminus\set{0}\to [0,1]$ and $\phi_\del^\star:\bR^3\setminus\set{0}\to (0,\infty]$ by $\kappa^\star(w)=\kappa(Jw)$ and
\eqlabel{eq:phi*def}{\phi_\del^\star(w)=\phi_{\del}(Jw)=\kappa^\star(w)^{-2\del}\|w\|^{-1-\del}.}
The contraction inequality for $\phi_\del^\star$ follows from Proposition~\ref{eq:linearcontractionhypothesis}.

\begin{prop}[Contraction for $\phi_\del^\star$]\label{eq:linearcontractionhypothesis*}
    Let $0<\del\leq 0.01$. Then we have
    $$\int_{-1}^{1}\phi_\del^\star(a_t^*u_r^*w)dr\leq 80\del^{-1}e^{-\del t}\phi_\del^\star(w)$$
    for any $t\geq 1$ and $w\in\bR^3\setminus\set{0}$.
\end{prop}
\begin{proof}
    In combination with \eqref{eq:phi*def}, Proposition~\ref{eq:linearcontractionhypothesis} implies that
    \eq{\begin{aligned}
        \int_{-1}^{1}\phi_\del^\star(a_t^*u_r^*w)dr&=\int_{-1}^{1}\phi_\del(a_tu_rJw)dr\\&\leq 80\del^{-1}e^{-\del t}\phi_\del(Jw)=80\del^{-1}e^{-\del t}\phi_\del^\star(w).
    \end{aligned}}
\end{proof}

\section{Margulis inequality}\label{sec:MargulisInequality}

\subsection{Construction of modified Margulis function $\widetilde{\alpha}_{\del,\eta,M}$}
In this subsection, we modify the original Margulis function $\alpha$ so that a modified height function $\widetilde{\alpha}_{\del,\eta,M}$ involves an additional factor related to the distance from $\cH$. The construction of the modified height function $\widetilde{\alpha}_{\del,\eta,M}$ relies on the local height function $\phi_\del$ we studied in the previous section. We will also state the subharmonic estimate for the modified height function.

For $0<\del\leq 0.01$, $0<\eta<1$, and $M>1$ define $\widehat{\phi}_{1,\del,\eta,M},\widehat{\phi}_{2,\del,\eta,M}:H\times (\bR^3\setminus\set{0})\to (0,\infty)$ by
\eqlabel{eq:phihatdef}{\widehat{\phi}_{1,\del,\eta,M}(g;v):=\begin{cases}
    \phi_\del(gv) & \text{ if } v\notin \cH_{\eta,M},\\
    \|gv\|^{-(1-3\del)} & \text{ if } v\in \cH_{\eta,M},
\end{cases}}
\eqlabel{eq:phihat*def}{\widehat{\phi}_{2,\del,\eta,M}(g;v):=\begin{cases}
    \phi_\del^\star(g^*v) & \text{ if } v\notin \cH_{\eta,M},\\
    \|g^*v\|^{-(1-3\del)} & \text{ if } v\in \cH_{\eta,M},
\end{cases}}
where $g\in G$ and $v\in \bR^3\setminus\set{0}$. For any $g\in H$ and $v\in \bR^3\setminus\set{0}$ we have \eqlabel{eq:pihatdualrelation}{\widehat{\phi}_{1,\del,\eta,M}(g;v)=\widehat{\phi}_{2,\del,\eta,M}(g;Jv),} since $\|v\|=\|Jv\|$ and $Q_0(v)=Q_0(Jv)$. 

From now on, we will write $\kappa_1=\kappa$ and $\kappa_2=\kappa^\star$ for notational convenience.

\begin{prop}[Contraction for $\widehat{\phi}_{i,\del,\eta,M}$]\label{eq:linearcontractionhypthesishat}
    For any $i=1,2$, $0<\del\leq 0.01$, $s\geq 1$, $r\in[-1,1]$, $g\in H$, and $v\in \bR^3\setminus\set{0}$ we have
    \eqlabel{eq:phihatcontraction}{\int_{-1}^{1}\widehat{\phi}_{i,\del,\eta,M}(a_su_rg;v)dr\leq 80\del^{-1}e^{-\del s}\widehat{\phi}_{i,\del,\eta,M}(g;v).}
    Moreover, we have
\eqlabel{eq:Lipschitzhat}{\widehat{\phi}_{i,\del,\eta,M}(a_su_rg;v)\leq (3e^s)^{1+\del}\widehat{\phi}_{i,\del,\eta,M}(g;v).}
We also have
\eqlabel{eq:Lipschitz'hat}{\widehat{\phi}_{i,\del,\eta,M}(a_su_rg;v)\geq (3e^s)^{-(1+5\del)}\widehat{\phi}_{i,\del,\eta,M}(g;v)\quad \textrm{ if }\kappa_i(a_su_rgv)<1.}
    
\end{prop}
\begin{proof}
    The log-Lipschitz properties \eqref{eq:Lipschitzhat} and \eqref{eq:Lipschitz'hat} follow from \eqref{eq:Lipschitz0}, \eqref{eq:Lipschitz}, and \eqref{eq:Lipschitz'}. The contraction inequality \eqref{eq:phihatcontraction} follows from the contraction inequality for $\|\cdot\|^{-(1-3\del)}$ in Lemma \ref{eq:triviallinearcontraction0} for $v\in \cH_{\eta,M}$ and the contraction inequality for $\phi_{\del}$ in Proposition \ref{eq:linearcontractionhypothesis} for $v\notin \cH_{\eta,M}$.
\end{proof}

Let us denote $\Lambda^{\circ}:=\Lambda\cap B(1)$ for any discrete subset $\Lambda$ of $\bR^3$. For $0<\delta\leq 0.01$, $0<\eta<1$, and $M>1$ we define $\widetilde{\alpha}_{1,\del,\eta,M}:H\times X\to [1,\infty]$ and $\widetilde{\alpha}_{2,\del,\eta,M}:H\times X\to [1,\infty]$ by
\eqlabel{eq:alphadel1def}{\widetilde{\alpha}_{1,\del,\eta,M}(g;\Del):=\begin{cases}
    \sup\set{\widehat{\phi}_{1,\del,\eta,M}(g;v): gv\in (g\Del)^{\circ}\setminus\set{0}} & \text{ if } (g\Del)^{\circ}\neq\set{0},\\
    1 & \text{ otherwise},
\end{cases}}
\eqlabel{eq:alphadel2def}{\widetilde{\alpha}_{2,\del,\eta,M}(g;\Del):=\begin{cases}
    \sup\set{\widehat{\phi}_{2,\del,\eta,M}(g;v): g
    ^*v\in (g^*\Del^*)^{\circ}\setminus\set{0}} & \text{ if } (g^*\Del^*)^{\circ}\neq\set{0},\\
    1 & \text{ otherwise},
\end{cases}}
where $g^*\Del^*$ is the dual lattice of $g\Del$. 
Now we define the modified height function $\widetilde{\alpha}_{\del,\eta,M}:H\times X\to[1,\infty]$ by
\eqlabel{eq:alphadeldef}{\widetilde{\alpha}_{\del,\eta,M}(g;\Del):=\max\set{\widetilde{\alpha}_{1,\del,\eta,M}(g;\Del),\widetilde{\alpha}_{2,\del,\eta,M}(g;\Del)}}
for any $g\in H$ and $\Del\in X$. Note that we have
\eqlabel{eq:alphatildedualrelation}{\widetilde{\alpha}_{1,\del,\eta,M}(g;\Del)=\widetilde{\alpha}_{2,\del,\eta,M}(g;J\Del^*),\quad \widetilde{\alpha}_{\del,\eta,M}(g;\Del)=\widetilde{\alpha}_{\del,\eta,M}(g;J\Del^*).}
Recall that $\phi_\del(v)\geq \|v\|^{-1-\del}$ for all $v\in\bR^3\setminus\set{0}$. By Minkowski's first theorem either $(g\Del)^{\circ}\neq\set{0}$ or $\alpha_1(g\Del)=1$ holds, hence we clearly have $$\widetilde{\alpha}_{i,\del,\eta,M}(g;\Del)\geq \max\big(\widehat{\alpha}_{i,\eta,M}(g;\Del)^{1+\del},\alpha_i(g\Del)^{1-3\del}\big)$$ for any $i=1,2$ and $(g,\Del)\in H\times X$. This shows that  $$\widetilde{\alpha}_{\del,\eta,M}(g;\Del)\geq \max\big(\widehat{\alpha}_{\eta,M}(g;\Del)^{1+\del},\alpha(g\Del)^{1-3\del}\big)$$
for any $g\in H$ and $\Del\in X$.

We fix a certain absolute constant $D>1000$, which will be determined later in \S5 (see Proposition~\ref{prop:avoidanceMargulis'}).

For $s\geq 1$, $0<\eta\leq1$, and $M>1$, we define $\eps_{s,\eta,M}:H\times X\to (0,1)$ by
\eqlabel{eq:epsdef}{\eps_{s,\eta,M}(g;\Del):=\begin{cases}
    \eta (3e^{s})^{-60M} & \text{ if } \widehat{\alpha}_{\eta,M}(g;\Del)\leq 10^4e^{4s},\\
    \eta\widehat{\alpha}_{\eta,M}(g;\Del)^{-100DM^2} & \text{ if } \widehat{\alpha}_{\eta,M}(g;\Del)> 10^4e^{4s}.
\end{cases}}
For $i=1,2$, $s\geq 1$ and $0<\eps<1$ we denote
\eqlabel{eq:Xidef}{\Xi_i(s,\eps):=\set{v\in\bR^3: 1\leq\|v\|\leq 3e^s,\; \kappa_i(v)<\eps}.}
We define $\cE_{s,\eta,M}=\cE_{1,s,\eta,M}\cup\cE_{2,s,\eta,M}$ by
\eqlabel{eq:cEdef}{\cE_{1,s,\eta,M}:=\set{(g;\Del)\in H\times X: g(\Del\setminus \cH_{\eta,M})\cap\Xi_1(s,\eps_{s,\eta,M}(g;\Del))\neq\emptyset},}
\eqlabel{eq:cEdef*}{\cE_{2,s,\eta,M}:=\set{(g;\Del)\in H\times X: g^*(\Del^*\setminus \cH_{\eta,M})\cap\Xi_2(s,\eps_{s,\eta,M}(g;\Del))\neq\emptyset}.}
We note that $\Xi_1(s,\eps)=J\Xi_2(s,\eps)$, hence $\cE_{1,s,\eta,M}=\set{(g,J\Delta^*):(g;\Delta)\in\cE_{2,s,\eta,M}}$ holds.

We see that for any $s\geq1$, $0<\eta\leq 1$, $M>1$, and  $\Del\in X$
\eqlabel{eq:identitycontained}{(\operatorname{id},\Del)\notin \cE_{s,\eta,M},}
as $\Xi_1(s,\eps_{s,\eta,M}(\operatorname{id},\Del))$ and $\Xi_2(s,\eps_{s,\eta,M}(\operatorname{id},\Del))$ are contained in $\cH_{\eta,M}$ by definition. Indeed, for $i=1,2$ if $v\in \Xi_i(s,\eps_{s,\eta,M}(\operatorname{id},\Del))$ then $$|Q_0(v)|\leq \|v\|^2\kappa_i(v)<9e^{2s}\epsilon_{s,\eta,M}(\operatorname{id},\Delta)\leq 9\eta e^{2s}(3e^s)^{-60M}<\eta \|v\|^{-50M},$$
hence $v\in \cH_{\eta,M}$.

We shall record the following lemma:

\begin{lem}\label{lem:Q0Diophantineexpanding}
    Let $\Del\in X$, $s\geq 1$, $t\geq 4DMs$, and $r\in[-1,1]$. If $\widehat{\alpha}_{\eta,M}(a_tu_r;\Del)>10^4e^{\frac{t}{DM}}$ then $(a_tu_r,\Del)\notin\cE_{s,\eta,M}$.
\end{lem}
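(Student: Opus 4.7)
The plan is a proof by contradiction. Suppose $\widehat{\alpha}_{\eta,M}(a_tu_r;\Del) > 10^4 e^{t/(DM)}$ and $(a_tu_r, \Del) \in \cE_{s,\eta,M}$. Since $t \geq 4DMs$, the hypothesis forces $\widehat{\alpha}_{\eta,M}(a_tu_r;\Del) > 10^4 e^{4s}$, placing us in the second branch of \eqref{eq:epsdef}, so I may write $\eps := \eps_{s,\eta,M}(a_tu_r;\Del) = \eta \, \widehat{\alpha}_{\eta,M}(a_tu_r;\Del)^{-100DM^2}$. By \eqref{eq:cEdef}, there exists $v \in (\Del \cup \Del^*) \setminus \cH_{\eta,M}$ with $a_tu_r v \in \Xi(s,\eps)$; that is, $1 \leq \|a_tu_r v\| \leq 3e^s$ and $\kappa(a_tu_r v) < \eps$.

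The core idea is to squeeze $\|v\|$ between two incompatible bounds. For the lower bound, I would combine the $H$-invariance of $Q_0$ with $v \notin \cH_{\eta,M}$ to get $\eta \|v\|^{-50M} \leq |Q_0(v)| = |Q_0(a_tu_r v)|$. The condition $\kappa(a_tu_r v) < \eps < 1$ forces, via definition \eqref{eq:kappadef}, that $\kappa(a_tu_r v) = |\kappa_0(a_tu_r v)|$ and in particular $(a_tu_r v)_3 \neq 0$; whence $|Q_0(a_tu_r v)| = \kappa(a_tu_r v) \cdot |(a_tu_r v)_3|^2 \leq \eps \|a_tu_r v\|^2 \leq 9 \eps e^{2s}$. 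Substituting the value of $\eps$ in $\eta \|v\|^{-50M} \leq 9 \eps e^{2s}$ and taking $50M$-th roots yields $\|v\| \gg e^{-s/(25M)} \widehat{\alpha}_{\eta,M}(a_tu_r;\Del)^{2DM}$, which by the hypothesis on $\widehat{\alpha}_{\eta,M}$ exceeds $10^{8DM} e^{2t - s/(25M)}$ up to an absolute constant.

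For the matching upper bound I would invert the expanding action: writing $v = u_{-r} a_{-t}(a_tu_r v)$ and computing from the explicit form of $u_{-r} a_{-t}$ that $\|u_{-r} a_{-t}\|_{\textrm{op}} \ll e^t$ for $|r| \leq 1$ gives $\|v\| \ll e^{t+s}$. The two bounds together force $10^{8DM} e^{2t - s/(25M)} \ll e^{t+s}$, equivalently $10^{8DM} \ll e^{s(1 + 1/(25M)) - t}$. Since $t \geq 4DMs$, the exponent on the right is at most $-(4DM - 1 - 1/(25M))s \leq 0$, so the right-hand side is bounded by an absolute constant, while the left-hand side exceeds $10^{8000}$ (using $D > 1000$); this contradiction proves the lemma. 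The only real subtlety is arithmetic bookkeeping: the exponent $100DM^2$ in \eqref{eq:epsdef} is calibrated precisely so that $100DM^2 / (50M) = 2DM$, which combined with the factor $e^{t/(DM)}$ from the hypothesis yields the $e^{2t}$ factor needed to dominate the $e^{t+s}$ expansion in the upper bound.
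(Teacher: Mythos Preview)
Your proof is correct and follows essentially the same approach as the paper: both arguments set up the contradiction by observing $\widehat{\alpha}_{\eta,M}(a_tu_r;\Del) > 10^4e^{4s}$, extract a vector $v \notin \cH_{\eta,M}$ with $a_tu_rv \in \Xi(s,\eps)$, and then compare the lower bound on $|Q_0(v)|$ from $v \notin \cH_{\eta,M}$ against the upper bound coming from $\kappa(a_tu_rv) < \eps$ and $\|a_tu_rv\| \leq 3e^s$. The only cosmetic difference is that the paper substitutes the hypothesis $\widehat{\alpha}_{\eta,M} > 10^4e^{t/(DM)}$ into $\eps$ immediately (obtaining $\eps \leq \eta\,10^{-400DM^2}e^{-100Mt}$) and then compares the two bounds on $|Q_0(v)|$ directly, whereas you carry $\widehat{\alpha}_{\eta,M}$ symbolically and phrase the contradiction as incompatible bounds on $\|v\|$; the underlying arithmetic is identical.
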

\begin{proof}
    Assume for contradiction that there exist $s\geq 1$, $t\geq 4DMs$, and $r\in[-1,1]$ such that $\widehat{\alpha}_{\eta,M}(a_tu_r;\Del)>10^4e^{\frac{t}{DM}}$ and $(a_tu_r,\Del)\in\cE_{s,\eta,M}$. Note that $$\eps_{s,\eta,M}(a_tu_r;\Del)=\eta\widehat{\alpha}_{\eta,M}(a_tu_r;\Del)^{-100DM^2}$$ since $\widehat{\alpha}_{\eta,M}(a_tu_r;\Del)>10^4e^{4s}$. 
    
    Without loss of generality, we may assume that $(a_tu_r,\Delta)\in\cE_{1,s,\eta,M}$. Then there exists $v\in \Del\setminus\cH_{\eta,M}$ such that 
    $$a_tu_rv\in \Xi_1(s,\eta\widehat{\alpha}_{\eta,M}(a_tu_r;\Del)^{-100DM^2})\subseteq \Xi_1(s,\eta 10^{-400DM^2}e^{-100Mt}).$$ 
    This implies that $1\leq \|a_tu_rv\|\leq 3e^s$ and $\kappa(a_tu_rv)< 10^{-400DM^2} \eta e^{-100Mt}$, hence using \eqref{eq:Lipschitz0}, \eqref{eq:kappa0def}, and \eqref{eq:kappadef} we have $\|v\|\leq 3e^t\|a_tu_rv\|<10e^{t+s}$ and \eqlabel{eq:Q0lowerbound}{|Q_0(v)|=|Q_0(a_tu_rv)|\leq\kappa(a_tu_rv)\|a_tu_rv\|^{2}<10^{-100M}\eta e^{-100Mt+2s}.}
    On the other hand, since $v\notin \cH_{\eta,M}$ it holds that
    $$|Q_0(v)|>\eta \|v\|^{-50M}> \eta (10e^{t+s})^{-50M}> 10^{-100M}\eta e^{-100Mt+2s},$$
    but this contradicts \eqref{eq:Q0lowerbound}.
\end{proof}

The rest of this section will be devoted to proving the following subharmonic estimate for the modified height function $\widetilde{\alpha}_{\del,\eta,M}$.

\begin{prop}[Subharmonic estimate for $\widetilde{\alpha}_{\del,\eta,M}$]\label{prop:contractionhypothesis}
    Let $s\geq 1$, $0<\eta\leq 1$, $M>1$, and $0<\del\leq\frac{1}{400DM(M+7)}$. Then for any $g\in H$ and $\Del\in X$ with $(g;\Del)\notin \cE_{s,\eta,M}$ we have
    \eqlabel{eq:alphacontraction}{\int_{-1}^{1}\widetilde{\alpha}_{\del,\eta,M}(a_su_rg;\Del)dr\leq \del^{-10}\eta^{-4\del}e^{-\del s}\widetilde{\alpha}_{\del,\eta,M}(g;\Del)+e^{9s}.}
\end{prop}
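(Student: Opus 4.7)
The plan is to reduce the proposition to the linear contraction hypothesis of Proposition~\ref{eq:linearcontractionhypthesishat} by interchanging the supremum with the integral via the crude bound $\sup\leq\sum$. By the duality $\widetilde{\alpha}_{2,\del,\eta,M}(g;\Del)=\widetilde{\alpha}_{1,\del,\eta,M}(g^*;\Del^*)$ (and the corresponding symmetry of $a_s$, which is self-adjoint up to the Weyl element swapping coordinates $1$ and $3$), it suffices to prove the stated bound for $\widetilde{\alpha}_{1,\del,\eta,M}$; the argument for $\widetilde{\alpha}_{2,\del,\eta,M}$ is parallel. Let $\Sigma\subset\Del\setminus\set{0}$ denote the set of primitive vectors $v$ such that $a_su_rgv\in B(1)$ for some $r\in[-1,1]$. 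Since the supremum defining $\widetilde{\alpha}_{1,\del,\eta,M}(a_su_rg;\Del)$ is taken over $v$ with $a_su_rgv\in B(1)\setminus\set{0}$, bounding $\sup\leq\sum$ and applying Proposition~\ref{eq:linearcontractionhypthesishat} termwise yields
\eq{\int_{-1}^{1}\widetilde{\alpha}_{1,\del,\eta,M}(a_su_rg;\Del)\,dr \leq \sum_{v\in\Sigma}\int_{-1}^{1}\widehat{\phi}_{\del,\eta,M}(a_su_rg;v)\,dr \leq 80\del^{-1}e^{-\del s}\sum_{v\in\Sigma}\widehat{\phi}_{\del,\eta,M}(g;v).}

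The remaining task is to estimate the final sum, and I split $\Sigma$ into three parts: (i) $\|gv\|\leq 1$; (ii) $\|gv\|>1$ and $v\in\cH_{\eta,M}$; (iii) $\|gv\|>1$ and $v\notin\cH_{\eta,M}$. For (i), each term is bounded by $\widetilde{\alpha}_{1,\del,\eta,M}(g;\Del)$, and Minkowski's lattice-point theorem bounds the number of primitive such $v$ by an absolute constant, so their total contribution is $\ll\widetilde{\alpha}_{1,\del,\eta,M}(g;\Del)$. For (ii), $\widehat{\phi}_{\del,\eta,M}(g;v)=\|gv\|^{-(1-3\del)}\leq 1$. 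For (iii), the hypothesis $(g;\Del)\notin\cE_{s,\eta,M}$ together with $\|gv\|\leq 3e^s$ (forced by membership in $\Sigma$) implies $\kappa(gv)\geq\eps:=\eps_{s,\eta,M}(g;\Del)$, hence $\widehat{\phi}_{\del,\eta,M}(g;v)\leq\eps^{-2\del}\|gv\|^{-1-\del}$. The sums in (ii) and (iii) are controlled by a volumetric estimate on the sweep region $\set{w\in\bR^3:a_su_rw\in B(1)\text{ for some }r\in[-1,1]}$ combined with a dyadic decomposition in $\|gv\|$, producing a bound of the form $\ll e^{Cs}\eps^{-2\del}$ for a moderate absolute constant $C$.

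Finally, I plug in the two-case definition \eqref{eq:epsdef} of $\eps_{s,\eta,M}(g;\Del)$. If $\widehat{\alpha}_{\eta,M}(g;\Del)\leq 10^4e^{4s}$, then $\eps^{-2\del}=\eta^{-2\del}(3e^s)^{100M\del}$, and the assumption $\del\leq\frac{1}{2DM(M+7)}$ makes $100M\del$ small enough that the total contribution from (ii)--(iii) is absorbed into $e^{9s}$. If instead $\widehat{\alpha}_{\eta,M}(g;\Del)>10^4e^{4s}$, then $\eps^{-2\del}=\eta^{-2\del}\widehat{\alpha}_{\eta,M}^{200DM^2\del}$, and the resulting bound is absorbed into $e^{-\del s}\widetilde{\alpha}_{\del,\eta,M}(g;\Del)$ via the sandwich $\widetilde{\alpha}_{\del,\eta,M}\geq\widehat{\alpha}_{\eta,M}^{1+\del}$ combined with the lower bound $\widehat{\alpha}_{\eta,M}>10^4e^{4s}$, which allows trading any remaining $e^{Cs}$ factor for $\widehat{\alpha}_{\eta,M}^{C/4}$.

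I expect the main obstacle to be precisely this absorption step in the second case: the exponent $200DM^2\del$ plus the volumetric exponent $C$ (divided by $4$, after the trade) must stay strictly below $1+\del$, and this is exactly what pins down the constraint $\del\leq\frac{1}{2DM(M+7)}$ once $D$ is chosen large enough in advance to dominate the absolute constants arising from Minkowski's counting and the sweep-region estimate.
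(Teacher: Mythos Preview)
There is a genuine gap in your argument at step (i). Your claim that ``Minkowski's lattice-point theorem bounds the number of primitive such $v$ by an absolute constant'' is incorrect: Minkowski's first theorem guarantees the \emph{existence} of a short nonzero vector, not an upper bound on their number. A unimodular lattice in $\bR^3$ can have arbitrarily many primitive vectors in $B(1)$; for instance, the lattice with basis $(\eps,0,0),(0,\eps,0),(0,0,\eps^{-2})$ contains $\asymp\eps^{-2}$ primitive vectors in $B(1)$. In that example the sum $\sum_{v}\widehat{\phi}_{\del,\eta,M}(g;v)$ over such vectors is of order $\eps^{-2}$, while $\widetilde{\alpha}_{1,\del,\eta,M}(g;\Del)\asymp\eps^{-(1+\del)}$, so the passage $\sup\leq\sum$ cannot be controlled by a constant multiple of $\widetilde{\alpha}_{1,\del,\eta,M}(g;\Del)$ and the reduction collapses.

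The paper's proof circumvents this with two devices that your sketch omits. First, it restricts attention to the set $\cP_{1,\del,\eta,M}(g,\Del,s)$ of vectors whose $\widehat{\phi}$-value is already within a factor $(10e^{2s})^{-(1+5\del)}$ of the supremum; by the Lipschitz estimates \eqref{eq:Lipschitzhat} and \eqref{eq:Lipschitz'hat}, only these can compete for the supremum after applying $a_su_r$. This set is split as $\cQ\cup\cT$ according to whether $\kappa(gv)\geq\eta e^{-50Ms}$. If $|\cQ|\geq 2$, the two vectors produce a short wedge, forcing $\alpha_2(g\Del)$ to be large and yielding the bound directly via $\widetilde{\alpha}_1\ll e^{4s}\alpha_2^{1/2}$ with no summation at all. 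If $|\cQ|\leq 1$, the single $\cQ$-term is handled by \eqref{eq:phihatcontraction}, but the $\cT$-terms require the second, crucial, device: Lemma~\ref{lem:findingplane} shows $\cT$ lies in a single plane, and Proposition~\ref{eq:refinedcontractionhypothesis} then gives a contraction inequality for the \emph{supremum} (not the sum) of $\phi_\del$ over such a planar set. This supremum-version contraction, resting on the geometric Lemma~\ref{lem:intersection} about the intersection of a plane with a neighborhood of $\cH$, is precisely the ingredient your summation-based approach lacks.
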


\subsection{Intersection of surface and plane}
We begin with a simple observation. Let $L$ be a plane in $\bR^3$. Then the intersection of $L$ and the light cone $\cH=\set{v\in\bR^3:Q_0(v)=0}$ is the union of at most two lines in $\bR^3$. The contents of this subsection are based on a quantitative version of this observation.

 For $s\geq 1$, $\Del\in X$, and a $\Del$-rational plane $L\subset\bR^3$, denote by $\Omega_0(\Del,L,s)$ the set of nonzero vectors $v\in\Del\cap L$ such that $|\kappa_0(v)|<e^{-3s}$. The following lemma describes the distribution of $\Omega_0(\Del,L,s)$ on the plane $L$.
\begin{lem}\label{lem:intersection}
     For $s\geq 1$, $\Del\in X$, and a $\Del$-rational plane $L\subset\bR^3$, there exists a finite set $\cR_L\subset [-1,1]$ with $|\cR_L|\leq 2$ satisfying $\operatorname{dist}(\cR_L,\rho(b))\leq5e^{-s}$ for any $b\in \Omega_0(\Del,L,s)$.
\end{lem}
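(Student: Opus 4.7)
The plan is to exploit that the restriction of $\kappa_0$ to $L$ descends, via the slope variable $r=\rho(v)=-v_2/v_3$, to a monic quadratic polynomial in $r$. The (at most two) real roots of that polynomial are precisely the $\rho$-values of the isotropic lines contained in $L$, and they will serve as the candidates for $\cR_L$.

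Concretely, fix a defining equation $a_1 v_1+a_2 v_2+a_3 v_3=0$ for $L$ and first assume $a_1\neq 0$. For $v\in L$ with $v_3\neq 0$, solving for $v_1$ gives $v_1/v_3=(a_2 r-a_3)/a_1$ with $r=\rho(v)$, and substituting into $Q_0(v)/v_3^2=r^2-2v_1/v_3$ yields
\[
\kappa_0(v)\;=\;r^2-\frac{2a_2}{a_1}\,r+\frac{2a_3}{a_1}\;=:\;f(r).
\]
Thus on $L\cap\{v_3\neq 0\}$ the condition $|\kappa_0(b)|<e^{-3s}$ cutting out $\Omega_0(\Del,L,s)$ becomes $|f(\rho(b))|<e^{-3s}$, a constraint on a single quadratic polynomial.

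The sublevel-set analysis is now elementary. If $f$ has two real roots $r_1,r_2$ (possibly equal), factor $f(r)=(r-r_1)(r-r_2)$; then
\[
\min_{i=1,2}|\rho(b)-r_i|^2\;\leq\;|\rho(b)-r_1|\,|\rho(b)-r_2|\;=\;|f(\rho(b))|\;<\;e^{-3s},
\]
so $\min_i|\rho(b)-r_i|<e^{-3s/2}\leq e^{-s}\leq 5e^{-s}$ for $s\geq 0$. If instead $f$ has no real roots, write $f(r)=(r-r_0)^2+m$ with $m>0$; then $|f(\rho(b))|<e^{-3s}$ forces $m<e^{-3s}$ and $|\rho(b)-r_0|<\sqrt{e^{-3s}-m}<e^{-3s/2}\leq e^{-s}$, and moreover $\Omega_0=\emptyset$ once $e^{-3s}\leq m$. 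Setting $\cR_L$ to be $\{r_1,r_2\}$, $\{r_0\}$, or $\emptyset$ accordingly (intersected with $[-1,1]$, and discarding any root at distance $>5e^{-s}$ from $[-1,1]$ since no $b\in\Omega_0$ can then cluster near it without leaving the regime in which the bound is needed), gives a set of cardinality at most two with the desired property.

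The degenerate case $a_1=0$ contributes at most one slope: if $a_2\neq 0$ then $L\cap\{v_3\neq 0\}$ lies in $\{v_2/v_3=-a_3/a_2\}$, so $\rho(v)\equiv a_3/a_2$ is constant on it and a single point in $\cR_L$ suffices; if $a_2=0$, then $L=\{v_3=0\}$, every $v\in L$ has $\kappa_0(v)=\infty$, and $\Omega_0$ is empty. I expect the only real hurdle to be the bookkeeping issue of reconciling the natural home of the roots of $f$ (all of $\bR\cup\{\infty\}$) with the stipulation $\cR_L\subset[-1,1]$, which is handled by the boundary discussion above and by the slack between the $e^{-3s/2}$ that the algebra delivers and the looser $5e^{-s}$ in the conclusion. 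The substantive content is the identity $\kappa_0(v)=f(\rho(v))$, which collapses the two-dimensional problem on $L$ to a one-dimensional quadratic sublevel-set estimate.
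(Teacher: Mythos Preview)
Your approach is correct and takes a genuinely different, more transparent route than the paper. The identity $\kappa_0(v)=f(\rho(v))$ for a monic quadratic $f$ depending only on $L$ reduces the problem to an elementary sublevel-set estimate for a single-variable quadratic, and in fact delivers the sharper bound $e^{-3s/2}$ rather than $5e^{-s}$. The paper instead picks two vectors $v,w\in\Omega_0$ with $|\rho(v)-\rho(w)|>5e^{-s}$, writes a general point of $\Delta\cap L$ as $mv+nw$, and expands $\kappa_0(mv+nw)$ directly; that computation is longer and implicitly re-derives the quadratic structure you isolate at the outset. Your argument also makes no use of the lattice structure of $\Delta\cap L$---it works for any $b\in L$ with $|\kappa_0(b)|<e^{-3s}$---whereas the paper's assertion that every element of $\Delta\cap L$ is an \emph{integer} combination of the chosen $v,w$ is not literally justified (they need not form a $\mathbb{Z}$-basis), though the computation there goes through verbatim for real coefficients.

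On the constraint $\mathcal{R}_L\subset[-1,1]$: your proposed fix (intersect with $[-1,1]$ and discard far roots because ``the bound is not needed there'') does not prove the lemma as stated, since nothing in the definition of $\Omega_0$ bounds $|\rho(b)|$. However, the paper's own proof has exactly the same defect: it sets $\mathcal{R}_L=\{\rho(v),\rho(w)\}$ for $v,w\in\Omega_0$ without any control on where these lie. The inclusion $\mathcal{R}_L\subset[-1,1]$ appears to be a slip in the statement; the only application (the supremum contraction estimate immediately following) is to the subset $\Omega(\Delta,L,s)\subseteq\Omega_0$, where $\kappa(b)<e^{-3s}$ forces $|\rho(b)|<2$, and that argument only needs $\mathcal{R}_L$ to be a finite subset of $\mathbb{R}$ of cardinality at most two. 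So this is a gap in the stated hypothesis rather than in either proof.
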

\begin{proof}
    We may assume that there are two distinct vectors $v,w\in\Omega_0(\Del,L,s)$ with $|\rho(v)-\rho(w)|>5e^{-s}$. We shall show that $\cR_L=\set{\rho(v),\rho(w)}$ satisfies $\operatorname{dist}(\cR_L,\rho(b))\leq5e^{-s}$ for any $b\in\Omega_0(\Del,L,s)$. 
    
    Let $v=(v_1,v_2,v_3)\in\bR^3$ and $w=(w_1,w_2,w_3)\in\bR^3$. Then any vector in $\Del\cap L$ can be written in a form of $mv+nw$ with $(m,n)\in\bQ^2\setminus\set{0}$. For any $mv+nw\in\Omega_0(\Del,L,s)$ we may expand $\kappa_0(mv+nw)$ as follows:
    \eq{\begin{aligned}
        |\kappa_0(mv+nw)|&=\left|2\left(\frac{mv_1+nw_1}{mv_3+nw_3}\right)-\left(\frac{mv_2+nw_2}{mv_3+nw_3}\right)^2\right|\\
        &=\frac{|mnv_3w_3(\rho(v)-\rho(w))^2-(mv_3+nw_3)(mv_3\kappa_0(v)+nw_3\kappa_0(w))|}{(mv_3+nw_3)^2}.
    \end{aligned}}
    For simplicity we denote $\theta_{m,n}=\frac{mnv_3w_3}{(|mv_3|+|nw_3|)^2}$. Since $|\kappa_0(v)|,|\kappa_0(w)|<e^{-3s}$,
    \eq{\begin{aligned}
    |\kappa_0(mv+nw)|&\geq \left|\theta_{m,n}(\rho(v)-\rho(w))^2-\frac{(mv_3+nw_3)(mv_3\kappa_0(v)+nw_3\kappa_0(w))}{(|mv_3|+|nw_3|)^2}\right|\\&\geq |\theta_{m,n}|(\rho(v)-\rho(w))^2-\left|\frac{(mv_3+nw_3)(mv_3\kappa_0(v)+nw_3\kappa_0(w))}{(|mv_3|+|nw_3|)^2}\right|\\&\geq |\theta_{m,n}|(\rho(v)-\rho(w))^2-e^{-3s}.
    \end{aligned}}
    Thus, $mv+nw\in \Omega_0(\Del,L,s)$ implies $|\theta_{m,n}|<\frac{e^{-s}}{10}$. This in turn gives that either $\left|\frac{nw_3}{mv_3}\right|<\frac{e^{-s}}{4}$ or $\left|\frac{mv_3}{nw_3}\right|<\frac{e^{-s}}{4}$.

    Let us first consider the case $\left|\frac{nw_3}{mv_3}\right|<\frac{e^{-s}}{4}$. In this case, we have
    \eq{\begin{aligned}
        |\rho(mv+nw)-\rho(v)|&=\left|
        \frac{mv_2+nw_2}{mv_3+nw_3}-\frac{v_2}{v_3}\right|\\&=\left|\frac{nw_3}{mv_3+nw_3}\right||\rho(v)-\rho(w)|<\frac{e^{-s}}{4-e^{-s}}\cdot 2<e^{-s}.
    \end{aligned}}
    If $\left|\frac{mv_3}{nw_3}\right|<\frac{e^{-s}}{4}$, then we similarly get $|\rho(mv+nw)-\rho(w)|<e^{-s}$. This completes the proof.
\end{proof}

Let us denote by $\Omega(\Del,L,s)$ the set of nonzero vectors $v\in\Del\cap L$ such that $\kappa(v)<e^{-3s}$. The set $\Omega(\Del,L,s)$ is clearly a subset of $\Omega_0(\Del,L,s)$. Now we show the following supremum-version of the contraction inequality for $\phi_\del$ using Lemma \ref{lem:intersection}, where the supremum is taken over a subset of $\Omega(\Del,L,s)$.

\begin{prop}[Supremum-version of contraction for $\phi_\del$]\label{eq:refinedcontractionhypothesis}
    Let $0\leq \del\leq 0.01$ and let $s,\Del,L$ be as in Lemma \ref{lem:intersection}. For any $\cT\subseteq \Om(\Del,L,s)$ we have
    $$\int_{-1}^{1}\sup_{v\in \cT}\phi_\del(a_su_rv)dr\leq 200 e^{-\del s}\sup_{v\in \cT}\phi_\del(v).$$
\end{prop}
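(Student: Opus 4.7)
\textbf{Proof plan for Proposition \ref{eq:refinedcontractionhypothesis}.}

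The plan is to combine Lemma \ref{lem:intersection}, which forces the $\rho$-coordinates of vectors in $\cT$ to cluster near one of at most two points $r_1,r_2\in\cR_L\subset[-1,1]$, with a careful case analysis of how $\phi_\del(a_su_rv)$ depends on the distance $|r-\rho(v)|$ and on the fact that $\kappa(v)<e^{-3s}$. Set $M:=\sup_{v\in\cT}\phi_\del(v)$, which we may assume finite (otherwise the statement is vacuous, so in particular $\kappa(v)>0$ and $|\rho(v)|<2$ for every $v\in\cT$, so $\kappa(v)=|\kappa_0(v)|$ and $\|v\|\asymp|v_3|$).

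First I would unpack the behavior of $\kappa(a_su_rv)$ using Lemma \ref{lem:kappaexpansion} and the explicit formula $|\kappa_0(a_su_rv)|=e^{2s}|\kappa_0(v)|$: since $\kappa(v)<e^{-3s}$, the quantity $\kappa(a_su_rv)$ equals $e^{2s}\kappa(v)$ precisely when $|r-\rho(v)|<2e^{-s}$, and equals $1$ otherwise. Accordingly, for each $r\in[-1,1]$ partition $\cT=\cT_A(r)\sqcup\cT_B(r)$ by whether $|r-\rho(v)|\ge 2e^{-s}$ or not, and break the integral according to whether $r$ lies in the ``near'' region $I_{\operatorname{near}}:=\bigcup_{r_j\in\cR_L}[r_j-10e^{-s},r_j+10e^{-s}]$ (of total measure $\le 40e^{-s}$) or in its complement $I_{\operatorname{far}}$. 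By Lemma \ref{lem:intersection}, $\cT_B(r)=\emptyset$ for $r\in I_{\operatorname{far}}$.

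Next I would bound the supremum on each piece. For $v\in\cT_B(r)$, $\phi_\del(a_su_rv)=(e^{2s}\kappa(v))^{-2\del}\|a_su_rv\|^{-1-\del}$, and using $\|a_su_rv\|\ge|v_3|e^{-s}\gtrsim\|v\|e^{-s}$ this gives $\phi_\del(a_su_rv)\lesssim e^{(1-3\del)s}\phi_\del(v)\le e^{(1-3\del)s}M$; integrating over the support yields $\lesssim e^{-3\del s}M$. For $v\in\cT_A(r)$, $\kappa(a_su_rv)=1$, so $\phi_\del(a_su_rv)=\|a_su_rv\|^{-1-\del}$, and the matrix expansion \eqref{eq:matrixexpansion} combined with $|\kappa_0(v)|<e^{-3s}$ gives $\|a_su_rv\|\ge\tfrac{1}{4}|v_3|e^s(r-\rho(v))^2$. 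The key is to convert the hypothesis $\kappa(v)<e^{-3s}$ into a bound on $|v_3|$: since $\phi_\del(v)\le M$ and $\kappa(v)^{-2\del}>e^{6\del s}$, one gets $|v_3|^{-1-\del}\lesssim e^{-6\del s}M$. Hence $\sup_{v\in\cT_A(r)}\phi_\del(a_su_rv)\lesssim e^{-6\del s-(1+\del)s}M\cdot\sup_v(r-\rho(v))^{-2(1+\del)}$.

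Finally I would integrate this bound. On $I_{\operatorname{near}}$ use $|r-\rho(v)|\ge 2e^{-s}$ uniformly to get a factor $\lesssim e^{2(1+\del)s}$, and multiply by the measure $\le 40e^{-s}$ to obtain $\lesssim e^{-4\del s}M$. On $I_{\operatorname{far}}$ exploit the clustering: $|r-\rho(v)|\ge\tfrac{1}{2}|r-r_{j(v)}|$ where $r_{j(v)}\in\cR_L$ is closest to $\rho(v)$, so $\sup_v(r-\rho(v))^{-2(1+\del)}\le 4^{1+\del}\sum_{j}(r-r_j)^{-2(1+\del)}$, and the elementary integral $\int_{|r-r_j|\ge 10e^{-s}}(r-r_j)^{-2(1+\del)}dr\lesssim e^{(1+2\del)s}$ produces $\lesssim e^{-8\del s}M$. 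Summing the three contributions bounds the integral by $Ce^{-\del s}M$ for an absolute constant $C$; keeping numerical constants under control (with $\del\le 0.01$) yields the stated bound $200e^{-\del s}M$.

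The main obstacle, in my view, is not any single estimate but the passage from pointwise bounds on individual $v$ to a bound on the supremum. What makes it tractable is precisely Lemma \ref{lem:intersection}: without the clustering of $\rho(v)$ in a set of size two, the ``far'' region could contain arbitrarily many disjoint spikes of $\phi_\del(a_su_r\cdot)$, and no uniform control on the supremum would survive. A secondary subtlety is the correct use of the inequality $\kappa(v)<e^{-3s}$ to trade the smallness of $\kappa$ for extra powers of $e^{-\del s}$ via the identity $\phi_\del(v)=\kappa(v)^{-2\del}\|v\|^{-1-\del}$; this is what compensates for the pointwise log-Lipschitz growth factor $e^{(1+\del)s}$ and ultimately delivers the contraction.
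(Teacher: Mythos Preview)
Your approach is correct and follows essentially the same route as the paper: split $[-1,1]$ into a ``near'' region around $\cR_L$ (of length $O(e^{-s})$) and its complement, use the Lipschitz bound on the near region and the quadratic lower bound $\|a_su_rv\|\gtrsim e^s|v_3|(r-\rho(v))^2$ together with the clustering $|r-\rho(v)|\ge\tfrac12\min_j|r-r_j|$ on the far region, all combined with the $\kappa$-expansion from Lemma~\ref{lem:kappaexpansion}. The paper streamlines things slightly by using the uniform inequality $\kappa(a_su_rv)\ge e^{2s}\kappa(v)$ to factor out $e^{-4\del s}\kappa(v)^{-2\del}$ at the outset---this eliminates your $\cT_A/\cT_B$ split and keeps $\kappa(v)^{-2\del}\|v\|^{-1-\del}=\phi_\del(v)$ intact throughout, so the sup over $v$ immediately yields $\sup_{v\in\cT}\phi_\del(v)$ rather than requiring your separate conversion $\kappa(v)<e^{-3s}\Rightarrow|v_3|^{-1-\del}\lesssim e^{-6\del s}M$.
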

\begin{proof}
    Let $\cR_L$ be as in Lemma~\ref{lem:intersection}. We may assume $|\cR_L|=2$ and let $\cR_L=\set{r_1,r_2}$. Then $\min\set{|r_1-\rho(v)|,|r_2-\rho(v)|}\leq 5e^{-s}$ for any $v\in\Omega(\Del,L,s)$.
    
    By Lemma~\ref{lem:kappaexpansion} we have $\kappa(a_su_rv)\geq e^{2s}\kappa(v)$ for all $v\in \Omega(\Del,L,s)$. It follows that
    \eqlabel{eq:kappatrivialbound}{\int_{-1}^{1}\sup_{v\in \cT}\phi_\del(a_su_rv)dr\leq e^{-2\del s}\int_{-1}^{1}\sup_{v\in \cT}\kappa(v)^{-2\del}\|a_su_rv\|^{-1-\del}dr.}
    Let us write $v=(v_1,v_2,v_3)$. Note that $\|v\|\leq 3|v_3|$ for any $v\in \Omega(\Del,L,s)$, since $|\kappa(v)|<e^{-3s}$.
    If $\min\set{|r-r_1|,|r-r_2|}\geq 10e^{-s}$, then
    \eq{\begin{aligned}
        |r-\rho(v)|&\geq \min\set{|r-r_1|-|r_1-\rho(v)|,|r-r_2|-|r_2-\rho(v)|}\\&\geq \tfrac{1}{2}\min\set{|r-r_1|,|r-r_2|}\geq 5e^{-s}
    \end{aligned}}for any $v\in \Omega(\Del,L,s)$. Furthermore, we have
    $$\|a_su_rv\|\geq \tfrac{1}{2}e^s|v_3|((r-\rho(v))^2-\kappa(v))\geq \tfrac{1}{4}e^s|v_3|(r-\rho(v))^2,$$
    hence
    \eqlabel{eq:quadraticbound}{\begin{aligned}
        \|a_su_rv\|^{-1-\del}&\leq 4^{1+\del}|v_3|^{-1-\del}e^{-(1+\del)s}|r-\rho(v)|^{-2(1+\del)}\\&\leq 16^{1+\del}|v_3|^{-1-\del}e^{-(1+\del)s}(|r-r_1|^{-2(1+\del)}+|r-r_2|^{-2(1+\del)})
        \\&\leq 48^{1+\del}\|v\|^{-1-\del}e^{-(1+\del)s}(|r-r_1|^{-2(1+\del)}+|r-r_2|^{-2(1+\del)})
    \end{aligned}}
    for any $v\in \Omega(\Del,L,s)$.

    Let $I_1=[r_1-10e^{-s},r_1+10e^{-s}]$ and $I_2=[r_2-10e^{-s},r_2+10e^{-s}]$. For $r\in [-1,1]\setminus(I_1\cup I_2)$ we shall use \eqref{eq:quadraticbound} and obtain the following estimate:
    \eqlabel{eq:supestimate1}{\begin{aligned}
        &\int_{[-1,1]\setminus(I_1\cup I_2)}\sup_{v\in \cT}\kappa(v)^{-2\del}\|a_su_rv\|^{-1-\del}dr\\&\leq 48^{1+\del}\left(\sup_{v\in \cT}\phi_\del(v)\right)e^{-(1+\del)s}\int_{[-1,1]\setminus(I_1\cup I_2)}(|r-r_1|^{-2(1+\del)}+|r-r_2|^{-2(1+\del)})dr\\&\leq 200\left(\sup_{v\in \cT}\phi_\del(v)\right)e^{-(1+\del)s}\int_{10e^{-s}<|r|<2}|r|^{-2(1+\del)}dr\\&\leq 400\left(\sup_{v\in \cT}\phi_\del(v)\right)e^{-(1+\del)s}(10e^{-s})^{-(1+2\del)}\leq 40e^{\del s}\sup_{v\in \cT}\phi_\del(v).
    \end{aligned}}

    On the other hand, for $r\in I_1\cup I_2$ we shall use \eqref{eq:Lipschitz0} and get
    \eqlabel{eq:supestimate2}{\begin{aligned}
        \int_{I_1\cup I_2}\sup_{v\in \cT}\kappa(v)^{-2\del}\|a_su_rv\|^{-1-\del}dr&\leq (3e^s)^{1+\del}|I_1\cup I_2|\sup_{v\in \cT}\phi_\del(v)\\&\leq 160e^{\del s}\sup_{v\in \cT}\phi_\del(v).
    \end{aligned}}
    Combining \eqref{eq:kappatrivialbound}, \eqref{eq:supestimate1}, and \eqref{eq:supestimate2}, we obtain the desired inequality.
    
\end{proof}

%We also record the following superemum-version of the contraction hypothesis for an exponent $\lambda<1$. We omit the proof as it is almost identical with the proof of Proposition~\ref{eq:refinedcontractionhypothesis}.
%\begin{prop}\label{eq:refinedcontractionhypothesis'}
%    Let $\frac{1}{2}\leq \lambda< 1$ and let $s,\Del,L$ be as in Lemma \ref{lem:intersection}. For any $\cT\subseteq \Om(\Del,L,s)$ we have
%    $$\int_{-1}^{1}\sup_{v\in \cT}\|a_su_rv\|^{-\lambda}dr\leq 200 e^{-(1-\lambda) s}\sup_{v\in \cT}\|v\|^{-\lambda}.$$
%\end{prop}

\subsection{Proof of the subharmonic estimate for $\widetilde{\alpha}_{\del,\eta,M}$}
For a lattice $\Lambda$ in $\bR^3$, denote by $\Lambda_{\operatorname{prim}}$ the set of representative primitive vectors in $\Lambda$ for scalar multiplication by $\pm1$. Note that the height functions $\kappa(v),\phi_\delta(v),\widehat{\phi}_{\delta,\eta,M}(v)$ are all invariant under the scalar multiplication by $-1$. For $0<\del\leq 0.01$, $s\geq 1$, $0<\eta\leq 1$, $M>1$, $g\in H$, and $\Del\in X$ we denote by $\cP_{1,\del,\eta,M}(g,\Del,s)$ the set of $v\in \Del_{\operatorname{prim}}\setminus \cH_{\eta,M}$
satisfying
$$\|gv\|\leq 3e^{s},\quad \phi_{\del}(gv)>(10e^{2s})^{-1-5\del}\widetilde{\alpha}_{1,\del,\eta,M}(g;\Del).$$
Similarly, denote by $\cP_{2,\del,\eta,M}(g,\Del,s)$ the set of $v\in \Del^*_{\operatorname{prim}}\setminus \cH_{\eta,M}$ satisfying
$$\|g^*v\|\leq 3e^{s},\quad \phi_{\del}^*(g^*v)>(10e^{2s})^{-1-5\del}\widetilde{\alpha}_{2,\del,\eta,M}(g;\Del).$$
%$$\cP_{1,\del,\eta,M}(g,\Del,s):=\set{v\in \Del_{\operatorname{prim}}\setminus \cH_{\eta,M}: \|gv\|\leq 3e^{s},\; \widehat{\phi}_{\del,\eta,M}(g;v)>(10e^{2s})^{-1-5\del}\widetilde{\alpha}_{1,\del,\eta,M}(g;\Del)},$$
%$$\cP_{1,\del,\eta,M}(g,\Del,s):=\set{v\in \Del^*_{\operatorname{prim}}\setminus\cH_{\eta,M}: \|gv\|\leq 3e^{s},\; \widehat{\phi}_{\del,\eta,M}(g;v)>(10e^{2s})^{-1-5\del}\widetilde{\alpha}_{2,\del,\eta,M}(g;\Del)}.$$
For $1\leq i\leq 2$ let us also denote
    $$\cQ_{i,\del,\eta,M}(g,\Del,s):=\set{v\in \cP_{i,\del,\eta,M}(g,\Del,s): \kappa_i(gv)\geq \eta (3e^{s})^{-60M}},$$
    $$\cT_{i,\del,\eta,M}(g,\Del,s):=\set{v\in \cP_{i,\del,\eta,M}(g,\Del,s): \kappa_i(gv)< \eta (3e^{s})^{-60M}}.$$
Clearly $\cP_{i,\del,\eta,M}(g,\Del,s)=\cQ_{i,\del,\eta,M}(g,\Del,s)\cup \cT_{i,\del,\eta,M}(g,\Del,s)$ for $i=1,2$.

\begin{lem}\label{lem:findingplane}
    Let $s\geq 1$, $0<\eta\leq 1$, $M>1$, and $(g,\Del)\notin \cE_{s,\eta,M}$. For each $i=1,2$, there exists a plane $L_i\subset \bR^3$ such that $\cT_{i,\del,\eta,M}(g,\Del,s)$ is contained in $L_i\cap\Del$.
\end{lem}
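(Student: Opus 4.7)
I plan to argue by contradiction, treating the case $i=1$ (the $i=2$ case is symmetric after swapping $\Del$ with $\Del^*$). Suppose that $\cT_{1,\del,\eta,M}(g,\Del,s)$ contains three linearly independent primitive vectors $v_1,v_2,v_3\in\Del$; I aim to produce a witness $w\in(\Del\cup\Del^*)\setminus\cH_{\eta,M}$ with $gw\in\Xi(s,\eps_{s,\eta,M}(g;\Del))$, contradicting the hypothesis $(g,\Del)\notin\cE_{s,\eta,M}$.

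First I would parametrize each $gv_j$: since $\kappa(gv_j)<\eta e^{-50Ms}<1$ forces $|\rho(gv_j)|<2$ and $|\kappa_0(gv_j)|<\eta e^{-50Ms}$, setting $\lambda_j=(gv_j)_3$ and $r_j=-(gv_j)_2/\lambda_j$ gives
\begin{equation*}
gv_j=\lambda_j\bigl(\tfrac{r_j^2}{2}+\mu_j,\,r_j,\,1\bigr),\quad |r_j|\leq 2,\ |\mu_j|<\tfrac{\eta e^{-50Ms}}{2},\ |\lambda_j|\leq 3e^s,
\end{equation*}
so each $gv_j$ lies in a very thin tubular neighborhood of a single nappe of the cone $\cH$. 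The natural candidate for the witness is a cross-product $w_{ij}:=v_i\times v_j\in\Del^*$; using the identification of $\bigwedge^2\R^3$ with $\R^3$ (under which the paper's action of $g$ on $\Del^*$ coincides with $(gv_i)\times(gv_j)=gw_{ij}$), a direct expansion yields
\begin{equation*}
(gv_i)\times(gv_j)=\lambda_i\lambda_j(r_i-r_j)\bigl(1,-\tfrac{r_i+r_j}{2},\tfrac{r_ir_j}{2}\bigr)+O\bigl(\eta e^{(2-50M)s}\bigr),
\end{equation*}
from which $\|gw_{ij}\|\asymp|\lambda_i\lambda_j(r_i-r_j)|$ and $Q_0(gw_{ij})=\tfrac{1}{4}\lambda_i^2\lambda_j^2(r_i-r_j)^4+O(\cdot)$.

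Linear independence of the $v_j$'s gives $|\det(gv_1,gv_2,gv_3)|\geq 1$, and the Vandermonde expansion of this determinant forces $|(r_1-r_2)(r_2-r_3)(r_1-r_3)|\gtrsim e^{-3s}$, yielding quantitative control of the three differences. The plan is then to pigeonhole among the three pairs to select an $(i,j)$ for which the witness $gw_{ij}$ simultaneously satisfies $\|gw_{ij}\|\in[1,3e^s]$ and $\kappa(gw_{ij})<\eps_{s,\eta,M}(g;\Del)$; the non-quasi-null condition $w_{ij}\notin\cH_{\eta,M}$ is verified from the polarization identity $Q_0(v_i\times v_j)=B_0(v_i,v_j)^2-Q_0(v_i)Q_0(v_j)$ together with $|Q_0(v_k)|\geq\eta\|v_k\|^{-50M}$ for each $k$. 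The hard part will be carrying out this pigeonhole cleanly: the three differences $|r_i-r_j|$ can interpolate uncomfortably between the regimes needed for the norm and $\kappa$ bounds. To resolve this I would split on whether $\widehat{\alpha}_{\eta,M}(g;\Del)\leq 10^4 e^{4s}$ or not (using the corresponding branch of \eqref{eq:epsdef}), and separately handle the subcase of $\cT_1$ where some $v_j$ has $\|gv_j\|<1$ — in that branch $\lambda_j$ may be small and one appeals to Minkowski's second theorem applied to $g\Del$ to limit the dimension spanned by such short vectors. The calibration of the exponents $50M$ in $\cH_{\eta,M}$ and $100DM^2$ in $\eps_{s,\eta,M}$ is precisely what makes the two sets of estimates compatible across both branches.
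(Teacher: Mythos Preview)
The paper's argument is entirely different and much shorter: it never attempts to manufacture a witness in $\Xi(s,\eps_{s,\eta,M})$, but uses a direct Minkowski/covolume argument. In the case $\widehat{\alpha}_{\eta,M}(g;\Del)\leq 10^4e^{4s}$ (so $\eps_{s,\eta,M}=\eta(3e^s)^{-50M}$), the hypothesis $(g,\Del)\notin\cE_{s,\eta,M}$ rules out any $v\in\Del\setminus\cH_{\eta,M}$ with $1\leq\|gv\|\leq 3e^s$ and small $\kappa(gv)$; since every $v\in\cT_1$ already has small $\kappa(gv)$ and $\|gv\|\leq 3e^s$, this forces $\|gv\|<1$, so $g\cT_1\subset g\Del\cap B(1)$ and Minkowski (covolume one) gives at most two independent vectors there. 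In the case $\widehat{\alpha}_{\eta,M}(g;\Del)>10^4e^{4s}$ there is a vector $v_0$ with $\|gv_0\|\ll e^{-2s}$; two further vectors in $g\Del\cap B(3e^s)$ independent of $gv_0$ would give a rank-$3$ sublattice of covolume $<1$, which is impossible. Either way $\cT_1$ lies in a plane, and the $i=2$ case is symmetric.

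Your proposed route has a genuine gap. The step ``$w_{ij}\notin\cH_{\eta,M}$ follows from the polarization identity together with $|Q_0(v_k)|\geq\eta\|v_k\|^{-50M}$'' does not work: in $Q_0(v_i\times v_j)=B_0(v_i,v_j)^2-Q_0(v_i)Q_0(v_j)$ the right-hand side is a \emph{difference}, and lower bounds on each $|Q_0(v_k)|$ give no lower bound on it---cancellation is not excluded. Your asymptotic $Q_0\bigl((gv_i)\times(gv_j)\bigr)\approx\tfrac14\lambda_i^2\lambda_j^2(r_i-r_j)^4$ does bound $|Q_0|$ of the cross product of the $gv_j$'s, but membership in $\cH_{\eta,M}$ is a condition on $Q_0(w_{ij})$ and $\|w_{ij}\|$, not on their $g$-translates, and the lemma is stated for arbitrary $g\in G$, where $Q_0$ is not invariant. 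There is also an algebra slip: for $g\in\SL_3(\bR)$ one has $(gv_i)\times(gv_j)=\transp{g^{-1}}w_{ij}$, not $gw_{ij}$, so your expansion computes the wrong object for the $\cE_{s,\eta,M}$ membership test as the paper writes it. You end up invoking Minkowski anyway for the short-vector subcase; the paper's point is that Minkowski alone, applied from the start, dispatches the whole lemma in two lines with no pigeonholing and no cross-product construction.
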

\begin{proof}
    Without loss of generality, it suffices to show the statement for $i=1$. 
    
    Suppose that $\widehat{\alpha}(g;\Del)\leq 10^4e^{4s}$. In this case $(g,\Del)\notin \cE_{s,\eta,M}$ means that there is no vector $v\in g\big(\Del\setminus\cH_{\eta,M}\big)$ with $1\leq \|v\|\leq 3e^s$ and $\kappa(v)<\eta (3e^{s})^{-60M}$. Hence $\cT_{1,\del,\eta,M}(g,\Del,s)$ is contained in $g\Del\cap B(1)$. Since the co-volume of $g\Del$ is one, by Minkowski's first theorem there are at most two independent vectors in $g\Del\cap B(1)$. Thus, there exists a plane $L_1$ such that $\cT_{1,\del,\eta,M}(g,\Del,s)$ is contained in $L_1$.

    Suppose that $\widehat{\alpha}(g;\Del)> 10^4e^{4s}$. Then there exist $v\in g\big(\Del\setminus\cH_{\eta,M}\big)$ such that $\|v\|\leq \frac{1}{50}e^{-2s}$. If there are $v',v''\in g\Del\cap B(3e^s)$ such that $\set{v,v',v''}$ is linearly independent, then this contradicts that the co-volume of $g\Del$ is one. Hence, there exists a plane $L_1$ such that $\cT_{1,\del,\eta,M}(g,\Del,s)$ is contained in $L_1$.
\end{proof}

Now we prove the subharmonic estimate for $\widetilde{\alpha}_{\del,\eta,M}$ in Proposition~\ref{prop:contractionhypothesis}.
\begin{proof}[Proof of Proposition \ref{prop:contractionhypothesis}]
    We first establish the subharmonic estimate for $\widetilde{\alpha}_{1,\del,\eta,M}$.

    \textbf{Case 1}. $|\cQ_{1,\del,\eta,M}(g,\Del,s)|\geq 2$.
    
    Let $v$ and $w$ be linearly independent vectors in $\cQ_{1,\del,\eta,M}(g,\Del,s)$. We have
    \eq{\begin{aligned}
        (10e^{2s})^{-1-5\del}\widetilde{\alpha}_{1,\del,\eta,M}(g;\Del)&<\phi_\del(gv)=\kappa(gv)^{-2\del}\|gv\|^{-1-\del}\\&\leq \eta^{-2\del}e^{2(M+2)\del s}\|gv\|^{-1-\del},
    \end{aligned}}
    hence 
    \eq{\begin{aligned}
        \|gv\|&\leq 10^{1+5\del}\eta^{-2\del}e^{\{2+2(M+7)\del\}s}\big(\widetilde{\alpha}_{1,\del,\eta,M}(g;\Del)\big)^{-1}\\&<10^{1+5\del}\eta^{-2\del}e^{2.5s}\big(\widetilde{\alpha}_{1,\del,\eta,M}(g;\Del)\big)^{-1}.
    \end{aligned}}
    It holds that $\|gw\|\leq 10^{1+5\del}\eta^{-2\del}e^{2.5s}\big(\widetilde{\alpha}_{1,\del,\eta,M}(g;\Del)\big)^{-1}$ similarly. Thus,
    $$\|gv\wedge gw\|\leq\|gv\|\|gw\|\leq 100^{1+5\del}\eta^{-4\del}e^{5s}\big(\widetilde{\alpha}_{1,\del,\eta,M}(g;\Del)\big)^{-2}.$$
    Identifying $gv\wedge gw$ with $gv\times gw\in g^*\Del^*$, we get
    $$\alpha_2(g\Del)^{-1}\leq 100^{1+5\del}\eta^{-4\del}e^{5s}\big(\widetilde{\alpha}_{1,\del,\eta,M}(g;\Del)\big)^{-2}.$$ Using \eqref{eq:Lipschitz}, it follows that for any $r\in[-1,1]$
    $$\widetilde{\alpha}_{1,\del,\eta,M}(a_su_rg;\Del)\leq (3e^s)^{1+\del}\widetilde{\alpha}_{1,\del,\eta,M}(g;\Del)\leq 100\eta^{-2\del}e^{4s}\alpha_2(g\Del)^{\frac{1}{2}}.$$

    \textbf{Case 2}. $|\cQ_{1,\del,\eta,M}(g,\Del,s)|\leq 1$.

    Observe that for any $v\in \Del_{\operatorname{prim}}\setminus\big(\cP_{1,\del,\eta,M}(g,\Del,s)\cup \cH_{\eta,M}\big)$ and $r\in[-1,1]$, we have
    \eqlabel{eq:Pdelta}{\phi_{\delta}(a_su_rgv)< 0.9\widetilde{\alpha}_{1,\del,\eta,M}(a_su_rg;\Del)} by the log-Lipschitz properties \eqref{eq:Lipschitz} and \eqref{eq:Lipschitz'}. Indeed, if $\kappa(a_su_rgv)<1$ and $v\notin \cH_{\eta,M}$ then 
    \eq{\begin{aligned}
        \phi_{\delta}(a_su_rgv)\leq (3e^s)^{1+5\del}\phi_{\delta}(gv)&<\left(\frac{3e^{-s}}{10}\right)^{1+5\del}\widetilde{\alpha}_{1,\del,\eta,M}(g;\Del)\\&<0.9\widetilde{\alpha}_{1,\del,\eta,M}(a_su_rg;\Del),
    \end{aligned}}
    and if $\kappa(a_su_rgv)=1$ and $v\notin\cH_{\eta,M}$ then
    \eq{\begin{aligned}
        \phi_{\delta}(a_su_rgv)&=\|a_su_rgv\|^{-1-\del}\leq (3e^s)^{1+\del}\|gv\|^{-1-\del}\leq(3e^s)^{1+\del}\phi_{\delta}(gv)\\&<\left(\frac{3e^{-s}}{10}\right)^{1+5\del}\widetilde{\alpha}_{1,\del,\eta,M}(g;\Del)<0.9\widetilde{\alpha}_{1,\del,\eta,M}(a_su_rg;\Del).
    \end{aligned}} 
    
    It follows from \eqref{eq:Pdelta} that
    \eqlabel{eq:QTdecomposition}{\begin{aligned}
        \widetilde{\alpha}_{1,\del,\eta,M}(a_su_rg;\Del)&\leq \alpha(a_su_rg\Del)^{1-3\del}+\!\!\!\sup_{v\in \cP_{1,\del,\eta,M}(g,\Del,s)}\!\!\!\widehat{\phi}_{\del,\eta,M}(a_su_rg;v)\\&\leq \alpha(a_su_rg\Del)^{1-3\del}+\!\!\!\sum_{v\in \cQ_{1,\del,\eta,M}(g,\Del,s)}\!\!\!\widehat{\phi}_{\del,\eta,M}(a_su_rg;v)\\&\qquad\qquad\qquad\qquad+\!\!\!\sup_{v\in \cT_{1,\del,\eta,M}(g,\Del,s)}\!\!\!\widehat{\phi}_{\del,\eta,M}(a_su_rg;v).
    \end{aligned}}
    Recall the subharmonic estimate for $\alpha^{1-3\del}$ from Lemma~\ref{eq:triviallinearcontraction1}: we have
    \eqlabel{eq:alphacontribution}{\begin{aligned}
        \int_{-1}^{1}\alpha(a_su_rg\Delta)^{1-3\del}dr&\leq 100e^{-\del s}\alpha(g\Del)^{1-3\del}+e^{4s}\\&\leq 100e^{-\del s}\widetilde{\alpha}_{\del,\eta,M}(g;\Del)+e^{4s}.
    \end{aligned}}
    
    Applying the contraction inequality for $\widehat{\phi}_{\del,\eta,M}$ in Proposition~\ref{eq:linearcontractionhypthesishat}, we also have
    $$\sum_{v\in \cQ_{1,\del,\eta,M}(g,\Del,s)}\int_{-1}^{1}\widehat{\phi}_{\del,\eta,M}(a_su_rg;v)dr\leq  80\del^{-1} e^{-\del s}\!\!\!\sup_{v\in \cQ_{1,\del,\eta,M}(g,\Del,s)}\!\!\!\widehat{\phi}_{\del,\eta,M}(g;v)$$
    since $|\cQ_{1,\del,\eta,M}(g,\Del,s)|\leq 1$. For $v\in \cQ_{1,\del,\eta,M}(g,\Del,s)$, if $1<\|gv\|\leq 3e^s$ then
    \eq{\begin{aligned}
        \widehat{\phi}_{\del,\eta,M}(g;v)=\phi_\del(gv)&=\kappa(gv)^{-2\del}\|gv\|^{-1-\del}\\&\leq \kappa(gv)^{-2\del}\leq \eta^{-2\del}e^{100\del Ms}\leq \eta^{-2\del}e^{s},
    \end{aligned}} and if $\|gv\|\leq 1$ then
    $\widehat{\phi}_{\del,\eta,M}(g;v)\leq \widetilde{\alpha}_{1,\del,\eta,M}(g;\Del)$. Hence,
    \eqlabel{eq:Qcontraction}{\sum_{v\in \cQ_{1,\del,\eta,M}(g,\Del,s)}\int_{-1}^{1}\widehat{\phi}_{\del,\eta,M}(a_su_rg;v)dr\leq  80\del^{-1} e^{-\del s}(\widetilde{\alpha}_{1,\del,\eta,M}(g;\Del)+\eta^{-2\del}e^s).}

    According to Lemma~\ref{lem:findingplane} there exists a plane $L_1$ containing $\cT_{1,\del,\eta,M}(g,\Del,s)$. %Recall that $\Omega(x,L_1,s)$ denotes the set of vectors $v\in \Del\cap L_1$ with $\kappa(v)<e^{-3s}$. We thus have $\cT_{1,\del,\eta,M}(g,x,s)\subseteq \Omega(x,L_1,s)$, hence\eqlabel{eq:case2split}{\widetilde{\alpha}_{1,\del,\eta,M}(a_su_rx)\leq \sum_{v\in \cQ_{1,\del,\eta,M}(g,x,s)}\phi_\del(a_su_rv)+\sup_{v\in \Omega(x,L_1,s)}\phi_\del(a_su_rv)}for any $r\in[-1,1]$.
    This allows us to apply the sup-version of the contraction inequality for $\widehat{\phi}_{\del,\eta,M}$ in Proposition~\ref{eq:refinedcontractionhypothesis} and we get
    $$\int_{-1}^{1}\sup_{v\in \cT_{1,\del,\eta,M}(g,\Del,s)}\phi_\del(a_su_rgv)dr\leq 200e^{-\del s}\sup_{v\in \cT_{1,\del,\eta,M}(g,\Del,s)}\phi_\del(gv).$$
    
    Recall that $\phi_\del(gv)\leq \widetilde{\alpha}_{1,\del,\eta,M}(g;\Del)$ for any $gv\in \big(g(\Del\setminus \cH_{\eta,M})\big)^{\circ}$. By our assumption $(g,\Delta)\notin\mathcal{E}_{s,\eta,M}$ we get $\kappa(gv)\geq \eps_{s,\eta,M}(g;\Delta)$ for any $v\in \Delta\setminus \cH_{\eta,M}$ with $1\leq \|gv\|\leq 3e^s$. If $\widehat{\alpha}_{\eta,M}(g;\Del)\leq 10^4e^{4s}$ then every $v\in\cT_{1,\del,\eta,M}(g,\Del,s)$ is contained in $\big(g(\Del\setminus \cH_{\eta,M})\big)^{\circ}$ (see the first case of the proof of Lemma~\ref{lem:findingplane}). If $\widehat{\alpha}_{\eta,M}(g;\Del)> 10^4e^{4s}$ then for any $v\in \cT_{1,\del,\eta,M}(g,\Del,s)$ with $1\leq \|gv\|\leq3e^s$ we have
    $$\phi_\del(gv)\leq \kappa(gv)^{-2\del}\leq \eps_{s,\eta,M}(g;\Del)^{-2\del}=\eta^{-2\del}\alpha(g\Del)^{200DM^2\del}\leq \eta^{-2\del}\alpha(g\Del)^{\frac{1}{2}},$$
    by the definitions \eqref{eq:epsdef} and \eqref{eq:cEdef}. We thus have either $\phi_\del(gv)\leq \eta^{-2\del}\alpha(g\Del)^{\frac{1}{2}}$ or $gv\in \big(g(\Del\setminus \cH_{\eta,M})\big)^{\circ}$. It follows that $$\displaystyle\sup_{v\in \cT_{1,\del,\eta,M}(g,\Del,s)}\!\!\!\phi_\del(gv)\leq  \widetilde{\alpha}_{1,\del,\eta,M}(g;\Del)+\eta^{-2\del}\alpha(g\Del)^{\frac{1}{2}},$$ hence
    \eqlabel{Tcontraction}{\int_{-1}^{1}\sup_{v\in \cT_{1,\del,\eta,M}(g,\Del,s)}\!\!\!\phi_\del(a_su_rv)dr\leq 200e^{-\del s}(\widetilde{\alpha}_{1,\del,\eta,M}(g;\Del)+\eta^{-2\del}\alpha(g\Del)^{\frac{1}{2}}).}
    
    Together with \eqref{eq:QTdecomposition}, \eqref{eq:alphacontribution}, and \eqref{eq:Qcontraction}, it follows that
    $$\int_{-1}^{1}\widetilde{\alpha}_{1,\del,\eta,M}(a_su_rg;\Del)dr\leq 100\del^{-1}e^{-\del s}(\widetilde{\alpha}_{\del,\eta,M}(g;\Del)+\eta^{-2\del}e^{s}\alpha(g\Del)^{\frac{1}{2}}).$$

    From \textbf{Case 1} and \textbf{Case 2} we deduce that
    \eqlabel{eq:alpha1contraction}{\begin{aligned}
        \int_{-1}^{1}\widetilde{\alpha}_{1,\del,\eta,M}&(a_su_rg;\Del)dr\leq 100\del^{-1}e^{-\del s}\widetilde{\alpha}_{\del,\eta,M}(g;\Del)+200\del^{-1}\eta^{-2\del}e^{4s}\alpha(g\Del)^{\frac{1}{2}}\\&\leq 100\del^{-1}e^{-\del s}\widetilde{\alpha}_{\del,\eta,M}(g;\Del)+10^5\del^{-2}\eta^{-4\del}e^{-\del s}\alpha(g\Del)^{1-3\del}+\tfrac{1}{2}e^{9s}\\&\leq \del^{-8}\eta^{-4\del}e^{-\del s}\widetilde{\alpha}_{\del,\eta,M}(g;\Del)+\tfrac{1}{2}e^{9s}.
    \end{aligned}}
    
By the symmetric relation \eqref{eq:alphatildedualrelation} we can obtain the analogous inequality for $\widetilde{\alpha}_{\del,2}$:
\eqlabel{eq:alpha2contraction}{\begin{aligned}
    \int_{-1}^{1}\widetilde{\alpha}_{2,\del,\eta,M}(a_su_rg;\Del)dr&=\int_{-1}^{1}\widetilde{\alpha}_{1,\del,\eta,M}(a_su_rg;J\Del^*)dr\\&\leq \del^{-8}\eta^{-4\del}e^{-\del s}\widetilde{\alpha}_{\del,\eta,M}(g;J\Del^*)+\tfrac{1}{2}e^{9s}\\&= \del^{-8}\eta^{-4\del}e^{-\del s}\widetilde{\alpha}_{\del,\eta,M}(g;\Del)+\tfrac{1}{2}e^{9s}.
\end{aligned}}
The desired inequality \eqref{eq:alphacontraction} follows from \eqref{eq:alpha1contraction} and \eqref{eq:alpha2contraction}.

\end{proof}

\section{Avoidance estimates}
In the previous section, we established the subharmonic estimate for the modified height function $\widetilde{\alpha}_{\del,\eta,M}$ (Proposition \ref{prop:contractionhypothesis}). However, it does not hold for every point $(g,\Del)\in H\times X$: it is valid only if $(g,\Del)$ does not belong to $\cE_{s,\eta,M}$. The goal of this section is to control the amount of time that the orbit $(a_tu_r,\Del)$ stays within the set $\cE_{s,\eta,M}$ by establishing the following estimate.

\begin{prop}[Avoidance estimate]\label{prop:avoidanceMargulis'}
    There exists an absolute constant $D$ such that the following holds. Let $0<\delta\leq 0.01$ and let $Q$ be an indefinite ternary quadratic form of type $M\geq D$ with $\operatorname{det}Q=1$. Then there exists $0<\eta<1$ such that
$$\int_{-1}^{1}\widehat{\alpha}_{\eta,M}'(a_tu_r;\Del_Q)^{1+\del}\mathds{1}_{\cE_{s,\eta,M}}(a_tu_r;\Del_Q)dr\ll e^{-10s}$$
    for $s\geq 1$ and $t\geq 4DMs$, where the implied constant depends only on $Q$.
\end{prop}

%In the rest of this section we prove Proposition~\ref{prop:avoidanceMargulis'}.

\subsection{Integral quadratic forms and closed orbits}

We begin with an elementary lemma.
\begin{lem}\label{lem:quadraticformclosing}
    There is an absolute constant $D_1>1$ such that the following holds. Let $Q$ be an indefinite ternary quadratic form with $\operatorname{det}Q=1$. For any nonzero integral ternary quadratic form $Q'$ the closed orbit $H\Del'$ associated to $(\operatorname{det}Q')^{-\frac{1}{3}}Q'$ satisfies $\operatorname{Vol}(H\Del')<\|Q'\|^{D_1}$ and
    $$\bd(\Del_Q,H\Del')\ll \|Q-(\operatorname{det}Q')^{-\frac{1}{3}}Q'\|.$$
\end{lem}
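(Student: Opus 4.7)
The plan is to dispatch the two assertions separately. For both, write $Q = Q_0 \circ g$ so that $\Del_Q = g\bZ^3$, and set $Q'' := (\operatorname{det} Q')^{-1/3} Q' = Q_0 \circ g'$ for some $g' \in G$, so the closed orbit is $H\Del' = Hg'\Gamma/\Gamma \subset X$.

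For the distance bound, the plan is to produce $h \in H$ with $hg'$ close to $g$; taking $\gamma = e$ in the infimum defining $\bd(\Del_Q, H\Del')$ then yields $\bd(\Del_Q, H\Del') \leq \bd_G(g, hg')$. Letting $A$ denote the symmetric matrix of $Q_0$ and $k := g(g')^{-1}$, the identities $A' = g^T A g$ and $A'' = (g')^T A g'$ for the matrices of $Q$ and $Q''$ give
\begin{equation*}
    k^T A k - A \;=\; (g')^{-T}(A' - A'')(g')^{-1} \;=\; O\big(\|(g')^{-1}\|^2\,\|Q - Q''\|\big).
\end{equation*}
Since $H$ is the level set of the submersion $k \mapsto k^T A k$, the implicit function theorem yields $h \in H$ near $k$ with $\|k - h\|_{\operatorname{op}} \ll \|k^T A k - A\|$. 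Right-invariance of $\bd_G$ together with \eqref{opnorm} then gives $\bd_G(g, hg') = \bd_G(k, h) \leq \|k - h\|_{\operatorname{op}} \ll \|Q - Q''\|$. The factor $\|(g')^{-1}\|^2$ is absorbed into the implicit constant by restricting to the regime where $\|Q - Q''\|$ is small enough that $g'$ can be chosen in a bounded region; the complementary regime is handled trivially by enlarging the implicit constant.

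For the volume bound, I would identify the closed orbit with $H\Del' \cong H / \operatorname{Stab}_H(\Del')$, where $\operatorname{Stab}_H(\Del') = H \cap g'\Gamma(g')^{-1}$. Conjugation by $(g')^{-1}$ identifies this stabilizer with the integral orthogonal group $\operatorname{SO}(Q')(\bZ) := \operatorname{SO}(Q')(\bR) \cap \SL_3(\bZ)$, so $\operatorname{Vol}(H\Del')$ equals the covolume of $\operatorname{SO}(Q')(\bZ)$ in $\operatorname{SO}(Q')(\bR)$. This covolume is polynomially bounded in $\|Q'\|$ via Siegel's mass formula together with standard effective bounds on the class number and regulator of integral indefinite ternary forms; taking $D_1$ larger than the resulting exponent completes the argument.

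The main obstacle is establishing the polynomial volume bound uniformly in $Q'$: while classical, its proof requires effective control of the arithmetic invariants of $\operatorname{SO}(Q')$ traced through the mass formula and reduction theory. A secondary technicality is the uniform control of $\|(g')^{-1}\|$ in the distance estimate, handled by the reduction to small $\|Q - Q''\|$ sketched above.
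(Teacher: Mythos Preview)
Your proposal is correct and follows essentially the same approach as the paper, just with more detail: the paper's proof simply invokes that the identification $Q^g \mapsto Hg$ between determinant-one forms and $H\backslash G$ is $C^1$ (which you unpack via the implicit function theorem applied to $k\mapsto k^TAk$), and then cites the known fact that $\operatorname{Vol}(H\Del')$ is polynomial in the discriminant $\operatorname{det}(Q')\le 6\|Q'\|^3$ (which you trace through Siegel's mass formula). Your explicit concern about the factor $\|(g')^{-1}\|^2$ in the distance estimate is a point the paper leaves implicit as well; both arguments are slightly loose on the uniformity of the implied constant there.
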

\begin{proof}
    The space of indefinite ternary quadratic forms with determinant one is identified with $H\backslash G$ by the map $\iota:Q^g\mapsto Hg$ for $g\in H$. Let us choose $g'\in G$ such that $\iota(Q')=Hg'$. Since the map $Q^g\mapsto Hg$ is a $C^1$-map,
    $$\bd^X(\Del_Q,H\Del')\ll \|Q-(\operatorname{det}Q')^{-\frac{1}{3}}Q'\|.$$
    The discriminant of the closed orbit $H\Del'$ is bounded above by $\operatorname{det}(Q')\leq 6\|Q'\|^3$, and the volume of $H\Del'$ is also bounded above by a polynomial of the discriminant of $H\Del'$ (we refer the reader to \cite{ELMV09}, \cite[\S17.3]{EMV09}, and \cite[\S2.6]{LM14} for discussions on the discriminant and the volume of a closed orbit). It follows that $\operatorname{Vol}(H\Del')<\|Q'\|^{D_1}$ for some constant $D_1>1$.
\end{proof}

Let us denote $X_{\leq \mathsf{h}}:= \set{\Del\in X: \alpha(\Del)\leq \mathsf{h}}$ for $\mathsf{h}>1$, and
$$\cK(s,\eps):=\set{\Del\in X:  \Del\cap\Xi_1(s,\eps)\neq\emptyset}$$
for $s\geq 1$ and $0<\eps<1$.

\begin{lem}\label{lem:isotropicdichotomy}
    Let $\mathsf{h}>1$. For any $\Del\in X_{\leq\mathsf{h}}$, $s\geq 1$, $0<\eps<1$, and $T>100$ at least one of the followings holds:
    \begin{enumerate}
        \item $m_{\bR}\big(\set{r\in[-T,T]: u_r\Del\in \cK(s,\eps)}\big)\leq 100,$
        \item there exists $\Del'\in X$ such that $\operatorname{Vol}(H\Del')\leq (e^s\mathsf{h}T^2)^{20D_1}$ and
        $$\bd^X(\Del,\Del')\ll \eps (e^s\mathsf{h}T^2)^{10}.$$
    \end{enumerate}
\end{lem}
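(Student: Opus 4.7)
The plan is to prove the contrapositive: assuming $m_{\bR}\big(\{r \in [-T,T] : u_r \Delta \in \cK(s,\eps)\}\big) > 100$, I will deduce (2). For each such $r$, pick a vector $v_r \in u_r(\Delta \cup \Delta^*)$ with $1 \leq \|v_r\| \leq 3e^s$ and $\kappa(v_r) < \eps$. Since $\eps < 1$, the definition \eqref{eq:kappadef} forces $|\kappa_0(v_r)| < \eps$ and $|\rho(v_r)| < 2$, hence $|Q_0(v_r)| \leq \eps (v_{r,3})^2 \leq 9\eps e^{2s}$. Setting $w_r := u_r^{-1} v_r \in \Delta \cup \Delta^*$ and using that $u_r \in H = \operatorname{Stab}_G(Q_0)$ preserves $Q_0$, one obtains $|Q_0(w_r)| \leq 9\eps e^{2s}$ together with $\|w_r\| \ll T^2 e^s$.

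From \eqref{eq:matrixexpansion} one computes $\rho(u_r w) = \rho(w) - r$, so for each fixed $w$ the set of $r$ satisfying $|\rho(u_r w)| < 2$ is an interval of length at most $4$. Hence if the total measure exceeds $100$, at least $25$ distinct vectors $w \in \Delta \cup \Delta^*$ arise in the family $\{w_r\}$; by the $\Delta \leftrightarrow \Delta^*$ symmetry (which corresponds to passing to the dual quadratic form and preserves the integral-approximation conclusion) I may assume at least $13$ of them lie in $\Delta$. Reduction theory applied to $\alpha(\Delta) \leq \mathsf{h}$ yields $\Delta = g\bZ^3$ with $\|g\|, \|g^{-1}\| \ll \mathsf{h}$; setting $Q := Q_0 \circ g$, a form with $\det Q = 1$, the selected $w_i$ correspond to integer vectors $\bm_i := g^{-1} w_i \in \bZ^3$ satisfying $|Q(\bm_i)| \ll \eps e^{2s}$ and $\|\bm_i\| \ll T^2 e^s \mathsf{h}$.

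Assuming one can extract five of the $\bm_i$ in general position (no three coplanar through the origin), Lemma \ref{lem:effecivekernel} furnishes an integral ternary form $Q'$ with $\|Q - \rho Q'\| \ll \eps (T^2 e^s \mathsf{h})^{10}$ and $\|Q'\| \ll (T^2 e^s \mathsf{h})^{14}$, and Lemma \ref{lem:quadraticformclosing} converts this into a closed orbit $H\Delta'$ of volume $\ll \|Q'\|^{D_1} \leq (e^s \mathsf{h} T^2)^{20 D_1}$ satisfying $\bd^X(\Delta, H\Delta') \ll \eps (e^s \mathsf{h} T^2)^{10}$, which is precisely (2). The main technical obstacle is the \emph{general-position} requirement, which can fail if all $\bm_i$ happen to lie in a small number of common planes. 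I would address this by sharpening the pigeonhole bookkeeping: each plane through the origin meets the $R$-ball in $\Delta$ in at most $O((R\mathsf{h})^2)$ lattice points, so enlarging the implicit threshold (effectively trading the cosmetic constant $100$ for a slightly larger absolute constant) forces at least five of the $\bm_i$ to lie off any given union of five planes. The genuinely degenerate case in which most small-$|Q_0|$ vectors concentrate in a single plane $L$ can be handled separately: the restriction $Q_0|_L$ is a nonzero binary form, and the accumulation of small-value lattice points near its null lines forces $L$ to be $\Delta$-rational with $Q_0|_L$ proportional to an integral binary form, yielding the required closed-orbit approximation via a binary analogue of Lemma \ref{lem:effecivekernel} combined with Lemma \ref{lem:quadraticformclosing}.
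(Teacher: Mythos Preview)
Your overall strategy matches the paper's: argue by contrapositive, extract vectors $w_r\in\Delta\cup\Delta^*$ with small $|Q_0|$ and controlled norm, pigeonhole into $\Delta$ (or $\Delta^*$), then feed five of them into Lemma~\ref{lem:effecivekernel} followed by Lemma~\ref{lem:quadraticformclosing}. The difficulty you correctly flag---forcing five of the vectors into general position---is precisely where your argument has a genuine gap.

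Your first proposed fix, that ``each plane through the origin meets the $R$-ball in $\Delta$ in at most $O((R\mathsf{h})^2)$ lattice points'', does not help: that bound is polynomial in $e^s\mathsf{h}T^2$, vastly larger than your supply of $13$ vectors, so no absolute threshold in place of $100$ will force five of them off a common plane. Your second fix, a ``binary analogue'' when the vectors concentrate in a single plane $L$, does not connect to conclusion~(2): closed $H$-orbits in $X$ are parametrised by integral \emph{ternary} forms, and an integral approximation to the binary form $Q_0|_L$ does not produce one.

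The paper resolves this cleanly using an ingredient you did not invoke, namely Lemma~\ref{lem:intersection}. Instead of merely counting distinct vectors, the paper observes that if the return set has measure exceeding $100$ it cannot be covered by eight intervals of length $10$, so one can choose nine points $r_1,\dots,r_9$ in it that are \emph{$5$-separated}. Pigeonholing gives five of the corresponding $\bv_i$ in $\Delta$; since $|\rho(\bv_i)-r_i|<2$, the values $\rho(\bv_1),\dots,\rho(\bv_5)$ are then $1$-separated. Now the content of Lemma~\ref{lem:intersection} (quantifying that a plane meets the cone $\cH$ in at most two lines) says that vectors on a common plane with $|\kappa_0|<\eps$ have $\rho$-values clustering near at most two reference points; hence three vectors with $1$-separated $\rho$-values cannot be coplanar. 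This delivers general position immediately, with no plane-counting and no binary detour. The missing idea in your argument is exactly this: exploit separation in the \emph{parameter} $r$ to force separation in $\rho$, and then use the plane--cone geometry already developed in \S\ref{sec:MargulisInequality}.
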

\begin{proof}
    For each $v\in \Del\setminus\set{0}$ we denote
    $I(v):=\set{r\in\bR: u_rv\in\Xi(s,\eps)}$ and let
    $$\Phi:=\displaystyle\bigcup_{v\in \Del\setminus\set{0}}I(v)\cap[-T,T].$$
    Then we have
    \eqlabel{eq:avoidanceinclusion}{\set{r\in[-T,T]: u_r\Del\in \cK(s,\eps)}\subseteq \Phi.}
    Recall that $\rho(u_rv)=\rho(v)-r$ for any $r\in\bR$, and $\kappa(u_rv)<1$ holds only if $|\rho(u_rv)|<2$. It follows that $I(v)\cap [-T,T]$ for any $v\in \Del\setminus\set{0}$ is contained in an interval of length $4$.

    If the set $\Phi$ can be covered by eight intervals of length $10$, then in view of \eqref{eq:avoidanceinclusion} it is clear that (1) holds.

    Otherwise, we may find nine points $r_1,\cdots,r_9\in\Phi$ with $|r_i-r_j|>5$ for any $i\neq j$. We shall show that (2) holds in this case. For each $1\leq i\leq 9$ there exists $\bv_i\in (\Del\cup\Del^{*})\setminus\set{0}$ satisfying $u_{r_i}\bv_i\in \Xi(s,\eps)$. By the pigeonhole principle, without loss of generality, we may assume that $\bv_1,\cdots,\bv_5\in \Del\setminus\set{0}$. Note that $u_{r_i}\bv_i\in \Xi(s,\eps)$ implies that $\kappa_0(\bv_i)=\kappa_0(u_{r_i}\bv_i)<\eps$, $|\rho(\bv_i)-r_i|<2$, and $\|\bv_i\|\leq 3|r_i|^2\|u_{r_i}\bv_i\|\leq 10e^sT^2$ for all $1\leq i\leq 5$. Since $\rho(\bv_1),\cdots,\rho(\bv_5)$ are $1$-separated, in view of Lemma~\ref{lem:intersection} no three of $\bv_1,\cdots,\bv_5$ are on the same plane.

    For $1\leq i\leq 5$ we have 
    $$|Q_0(\bv_i)|=|Q_0(u_{r_i}\bv_i)|\leq \kappa(u_{r_i}\bv_i)\|u_{r_i}\bv_i\|^2\ll e^{2s}\eps.$$
    We may choose $g\in H$ with $\Del=g\bZ^3$ so that $\tfrac{1}{10}\|g^{-1}\|\leq\alpha(\Del)\leq \mathsf{h}$. Let $Q=Q^g$ and $\bm_i=g^{-1}\bv_i$ for $1\leq i\leq 5$. Then $|Q(\bm_i)|\leq \eps$ and $$\|\bm_i\|\leq 3\|g^{-1}\|\|\bv_i\|\leq 300e^s\mathsf{h}T^2$$ for all $1\leq i\leq 5$. Moreover, no three of $\bm_i,\cdots,\bm_5$ are on the same plane. Therefore, by Lemma~\ref{lem:effecivekernel} there exists a nonzero integral ternary quadratic form $Q'$ satisfying $\|Q'\|\leq 10^6(300e^s\mathsf{h}T^2)^{14}$ and
    $$\|Q-(\operatorname{det}Q')^{-\frac{1}{3}} Q'\|\ll \eps(e^s\mathsf{h}T^2)^{10}.$$
    It follows from Lemma~\ref{lem:quadraticformclosing} that there exists $\Del'\in X$ such that $H\Del'$ is closed, $$\operatorname{Vol}(H\Del')\leq \|Q'\|^{D_1}\leq (e^s\mathsf{h}T^2)^{20D_1},$$ and $\bd(\Del,H\Del')\ll \eps(e^s\mathsf{h}T^2)^{10}$, hence (2) holds.
\end{proof}

\subsection{Effective avoidance principle}
We recall an effective avoidance theorem from \cite{SS22} in order to control the amount of time that the orbit $a_tu_r\Del$ stays very close to periodic $H$-orbits. 

For positive real numbers $V>1$, $0<d<\tfrac{1}{2}$, and $\mathsf{h}>1$ we denote
\eq{\Upsilon(V,d):=\bigcup_{\operatorname{Vol}(Hx)<V}\set{y\in X: \bd_X(y,Hx)<d},}
\eq{\Upsilon(V,d,\mathsf{h}):=\Upsilon(V,d)\cup (X\setminus X_{\leq\mathsf{h}}).}

The following effective avoidance principle is a slightly simplified reformulation of a special case of \cite[Theorem 2]{SS22} (see also \cite[Proposition 4.6]{LMW22}).
\begin{thm}\label{thm:effectiveavoidance}
    There exists an absolute constant $D_2>1$ such that the following holds. Let $M>1, C>0$. For an indefinite ternary quadratic form $Q$ of type $M\geq D_2$ with $\operatorname{det}Q=1$ we have
    $$m_{\bR}(\set{r\in[-1,1]: a_tu_r\Del_Q\in \Upsilon(V,CV^{-D_2},\mathsf{h})})\ll V^{-1}+\mathsf{h}^{-\frac{1}{D_2}}$$
    for any $V>1$ and $t\geq D_2 M\log V$, where the implied constant depend only on $Q$ and $C$.
\end{thm}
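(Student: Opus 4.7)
The plan is to bound $m(\cB)$—and each dyadic piece $m(\cB_k)$—by quantities that beat $e^{-10s}$ after multiplying by the $\alpha^{1+\del}$ factor, combining the dichotomy of Lemma \ref{lem:isotropicdichotomy} with Theorem \ref{thm:effectiveavoidance} to rule out the ``bad'' alternative. Set $\cB := \set{r \in [-1,1] : (a_tu_r;\Del_Q) \in \cE_{s,\eta,M}}$ and $\cB_k := \cB \cap \set{r : 2^k \le \alpha(a_tu_r\Del_Q) < 2^{k+1}}$. Since $a_t$ has singular values $e^{\pm t}, 1$ and $|r| \le 1$, one has $\alpha(a_tu_r\Del_Q) \le C_{\Del_Q}\, e^t$, so the sum truncates at $k \le Ct + O(1)$. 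If I can show $m(\cB_k) \ll e^{-t}$ uniformly, then
$$\sum_{k \le Ct+O(1)} 2^{k(1+\del)}\, m(\cB_k) \ll e^{-t(1 - C(1+\del)\log 2)} \ll e^{-10s},$$
provided $D$ is chosen large enough in the hypothesis $t \ge 4DMs$.

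To bound $m(\cB_k)$, I would use the identity $a_tu_r = u_{re^t}a_t$ and substitute $r' = re^t$. The condition $(a_tu_r;\Del_Q) \in \cE_{s,\eta,M}$ implies $a_tu_r\Del_Q \in \cK(s,\eps)$ (with the analogous statement for the dual lattice) for $\eps := \eps_{s,\eta,M}(a_tu_r;\Del_Q)$, hence $u_{r'}(a_t\Del_Q) \in \cK(s,\eps)$ for $r' \in [-e^t, e^t]$. By \eqref{eq:epsdef} and Lemma \ref{lem:Q0Diophantineexpanding}, $\eps$ is extremely small on $\cB$. Apply Lemma \ref{lem:isotropicdichotomy} to $\Del := a_t\Del_Q$ with $T = e^t$ and $\mathsf{h}$ dominating $\alpha(a_t\Del_Q) \ll e^t$. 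Alternative (1) yields $m(\cB_k) \le 100 e^{-t}$ directly, after converting back via the $e^{-t}$ Jacobian of $r' = re^t$.

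Alternative (2) must be ruled out: it supplies a closed orbit $H\Del'$ of volume $V \le (e^s\mathsf{h}e^{2t})^{20D_1}$ with $\bd^X(a_t\Del_Q, H\Del') \ll \eps(e^s\mathsf{h}e^{2t})^{10}$. I would invoke Theorem \ref{thm:effectiveavoidance}: by right-invariance of the metric and Lipschitz continuity of the $u_r$-action with $|r| \le 1$, this closeness propagates so that $a_tu_r\Del_Q$ lies in $\Upsilon(V, CV^{-D_2}, \mathsf{h})$ for an $r$-interval of length comparable to the closeness distance (divided by a Lipschitz factor). Meanwhile Theorem \ref{thm:effectiveavoidance} bounds this measure by $V^{-1} + \mathsf{h}^{-1/D_2}$, valid because $t \ge 4DMs \ge D_2 M \log V$ once $D$ dominates $40 D_1 D_2$. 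The resulting contradiction forces Alternative (1).

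The principal obstacle is the parameter bookkeeping in Alternative (2). The volume $V$ depends polynomially on $e^s, \mathsf{h}, e^t$, and the thickness $\eps(e^s\mathsf{h}e^{2t})^{10}$ must fit inside the avoidance theorem's admissible thickness $CV^{-D_2}$; this translates into delicate inequalities on $D$ in terms of $D_1, D_2$ and the exponents in $\eps_{s,\eta,M}$. The latter itself has two cases in \eqref{eq:epsdef}—a ``small $\widehat{\alpha}_{\eta,M}$'' regime with $\eps = \eta(3e^s)^{-50M}$ and a ``larger $\widehat{\alpha}_{\eta,M}$'' regime with $\eps = \eta\widehat{\alpha}_{\eta,M}^{-100DM^2}$—so a nested case analysis keyed to the size of $\widehat{\alpha}_{\eta,M}(a_tu_r;\Del_Q)$ is likely required, aligning the exponents so that $D$ may be chosen to simultaneously satisfy every constraint arising from Lemma \ref{lem:isotropicdichotomy} and Theorem \ref{thm:effectiveavoidance}.
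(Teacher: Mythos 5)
There is a fundamental mismatch here: the statement you were asked to prove is Theorem \ref{thm:effectiveavoidance} itself, the effective avoidance principle bounding $m_{\bR}\big(\set{r\in[-1,1]: a_tu_r\Del_Q\in \Upsilon(V,CV^{-D_2},\mathsf{h})}\big)$, but your proposal never proves it. What you sketch is a bound for $\int_{-1}^{1}\alpha(a_tu_r\Del_Q)^{1+\del}\mathds{1}_{\cE_{s,\eta,M}}(a_tu_r;\Del_Q)\,dr$, i.e. Proposition \ref{prop:avoidanceMargulis}, by combining Lemma \ref{lem:isotropicdichotomy} with --- at the decisive step --- an invocation of Theorem \ref{thm:effectiveavoidance} to rule out the closed-orbit alternative. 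As a proof of the theorem in question this is circular: the key content is assumed, not established. (It also essentially reproduces the structure the paper already uses in \S 5.3, where Propositions \ref{prop:avoidance} and \ref{prop:avoidanceMargulis} take Theorem \ref{thm:effectiveavoidance} as an input.)

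For comparison, the paper does not prove Theorem \ref{thm:effectiveavoidance} internally at all: it is stated as a slightly simplified reformulation of a special case of \cite[Theorem 2]{SS22} (see also \cite[Proposition 4.6]{LMW22}). A genuine proof attempt would therefore either (i) carry out the reduction to the Sanchez--Seong avoidance principle --- specializing to $G=\SL_3(\bR)$, $H\simeq\SO(2,1)$, translating the Diophantine type $M$ of $Q$ into quantitative non-proximity of $\Del_Q$ to closed $H$-orbits of volume at most $V$ (this is where Lemma \ref{lem:quadraticformclosing}, relating integral forms $Q'$ to closed orbits of polynomially bounded volume, enters), and verifying that the time threshold $t\geq D_2M\log V$ and tube radius $CV^{-D_2}$ match the parameters of that theorem --- or (ii) reprove the avoidance principle from scratch, which needs tools (linearization and non-divergence for unipotent trajectories near periodic $H$-orbits, Margulis-type functions for the relevant intermediate representations) that are entirely absent from your sketch. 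Neither is attempted, so there is a genuine gap. As a secondary remark, even read as a proof of Proposition \ref{prop:avoidanceMargulis}, your dyadic bookkeeping fails: with $2^k\leq C_{\Del_Q}e^{t}$ the truncation is at $k\approx t/\log 2$, so a uniform bound $m(\cB_k)\ll e^{-t}$ only gives $\sum_k 2^{k(1+\del)}m(\cB_k)\ll e^{\del t}$, which diverges; one needs $m(\cB_k)$ to decay in the height $2^k$, which is exactly what the paper arranges through the height-dependent choice of $\eps_{s,\eta,M}$, Lemma \ref{lem:Q0Diophantineexpanding}, and the $\mathsf{h}_i^{-5}$ decay in the proof of Proposition \ref{prop:avoidanceMargulis}.
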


\subsection{Avoidance estimates}
In this subsection, we prove Proposition \ref{prop:avoidanceMargulis}.

\begin{prop}\label{prop:avoidance}
    There is an absolute constant $D_3>200$ such that the following holds. Let $M>1$. For an indefinite ternary quadratic form $Q$ of type $M\geq D_3$ with $\operatorname{det}Q=1$ we have
    $$m_\bR\big(\set{r\in[-1,1]: a_tu_r\Del_Q\in \cK(s,\eps)}\big)\ll \eps^{\frac{1}{D_3}}$$
    for any $s\geq 1$, $0<\eps<e^{-D_3s}$, and $t\geq M\log\frac{1}{\eps}$ where the implied constant depends only on $Q$.
\end{prop}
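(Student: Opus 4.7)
The plan is to combine the pointwise dichotomy of Lemma~\ref{lem:isotropicdichotomy} with the effective avoidance principle of Theorem~\ref{thm:effectiveavoidance}, linked by a one-dimensional packing argument. I will introduce auxiliary parameters $T:=\eps^{-1/D_3}$, $\mathsf{h}:=\eps^{-D_2/D_3}$, and $V:=(e^s\mathsf{h} T^2)^{20 D_1}$, and denote $B:=\set{r\in[-1,1]:a_tu_r\Del_Q\in\cK(s,\eps)}$. The claim is trivial when $T\leq 10$, so I assume $\eps$ is small enough to make $T>10$.

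For each $r\in B$ with $\alpha(a_tu_r\Del_Q)\leq\mathsf{h}$, I apply Lemma~\ref{lem:isotropicdichotomy} to the lattice $\Del=a_tu_r\Del_Q$ with the chosen $T$. A direct parameter check, using $\eps<e^{-D_3 s}$ and taking $D_3$ large in terms of $D_1,D_2$, shows that alternative~(2) of the dichotomy forces $a_tu_r\Del_Q\in\Upsilon(V,CV^{-D_2},\mathsf{h})$, and the same membership is automatic if $\alpha(a_tu_r\Del_Q)>\mathsf{h}$. This partitions $B=B_{\Upsilon}\cup B_1$, where $B_{\Upsilon}:=\set{r\in B:a_tu_r\Del_Q\in\Upsilon(V,CV^{-D_2},\mathsf{h})}$ and $B_1$ consists of those $r$ for which alternative~(1) of Lemma~\ref{lem:isotropicdichotomy} holds for $\Del=a_tu_r\Del_Q$.

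The set $B_{\Upsilon}$ is bounded directly by Theorem~\ref{thm:effectiveavoidance}, giving $m_{\bR}(B_{\Upsilon})\ll V^{-1}+\mathsf{h}^{-1/D_2}\ll\eps^{1/D_3}$; the hypothesis $t\geq D_2 M\log V$ needed there follows from $t\geq M\log(1/\eps)$ provided $D_3$ is large enough. For $B_1$, alternative~(1) of Lemma~\ref{lem:isotropicdichotomy} supplies, for every $r\in B_1$,
\[m_{\bR}\big(\set{r'\in[-T,T]:u_{r'}(a_tu_r\Del_Q)\in\cK(s,\eps)}\big)\leq 100.\]
The conjugation identity $a_tu_ra_t^{-1}=u_{e^tr}$ together with the substitution $r''=r+e^{-t}r'$ converts this into the local density estimate
\[m_{\bR}\big(B\cap[r-Te^{-t},r+Te^{-t}]\big)\leq 100 e^{-t}\quad\text{for every }r\in B_1.\]
Covering $B_1$ by intervals centered at a maximal $Te^{-t}$-separated subset, which has at most $2e^t/T+1$ elements, yields $m_{\bR}(B_1)\leq(2e^t/T+1)\cdot 100 e^{-t}\ll 1/T=\eps^{1/D_3}$. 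Summing the contributions of $B_\Upsilon$ and $B_1$ completes the proof.

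The main obstacle is the simultaneous calibration of $T$, $\mathsf{h}$, and $V$: one must take $D_3$ large in terms of $D_1,D_2$ so that the distance $\ll\eps(e^s\mathsf{h} T^2)^{10}$ output by Lemma~\ref{lem:isotropicdichotomy}(2) sits below the threshold $CV^{-D_2}$ demanded by Theorem~\ref{thm:effectiveavoidance}, while the hypothesis $t\geq D_2 M\log V$ remains compatible with $t\geq M\log(1/\eps)$. This balance is possible only thanks to the decay $\eps<e^{-D_3 s}$, which dominates the $e^{O(s)}$ factors entering $V$ and the Lemma~\ref{lem:isotropicdichotomy} output.
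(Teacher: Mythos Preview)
Your proof is correct and follows essentially the same strategy as the paper: both combine Lemma~\ref{lem:isotropicdichotomy} with Theorem~\ref{thm:effectiveavoidance} via a covering argument at scale $Te^{-t}$. The only cosmetic difference is that the paper first partitions $[-1,1]$ into intervals $J_i$ of length $Te^{-t}$ and then applies the dichotomy once per interval (choosing a single representative $r_i\in J_i$ outside $\Upsilon$), whereas you apply the dichotomy pointwise for $r\in B$ and then cover $B_1$ by a maximal separated net; the parameter choices also differ slightly (the paper takes $T=e^s\mathsf{h}$ and $D_3=200D_1D_2^2$ explicitly), but both calibrations work for the same reason.
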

\begin{proof}
 Let $\mathsf{h}=\eps^{-\frac{1}{200D_1D_2}}$, $T=e^s\mathsf{h}$, and let $C$ be the implied constant in (2) of Lemma~\ref{lem:isotropicdichotomy}. We may cover the interval $[-1,1]$ by disjoint intervals $J_1,\cdots,J_{N_T}$ of length $Te^{-t}$, where $N_T\asymp T^{-1}e^t$. Let $V=(e^{s}\mathsf{h})^{100D_1}$, and let $\Omega\subset\set{1,\cdots,N_T}$ be the set of $i$ such that $a_tu_r\Del_Q\in \Upsilon (V,CV^{-D_2},\mathsf{h})$ for all $r\in J_i$. We choose $D_3=200D_1D_2^2$, then it holds that $$\log V=100D_1(s+\log\mathsf{h})\leq\left(\frac{100D_1}{D_3}+\frac{100D_1}{200D_1D_2}\right)\log\frac{1}{\eps}\leq \frac{1}{D_2}\log\frac{1}{\eps},$$ hence $t\geq M\log\frac{1}{\eps}\geq D_2M\log V$.
 Applying Theorem~\ref{thm:effectiveavoidance}, we have \eqlabel{eq:periodiccontribution}{|\Omega|Te^{-t}\leq m_{\bR}(\set{r\in[-1,1]: a_tu_r\Del_Q\in \Upsilon(V,CV^{-D_2},\mathsf{h})})\ll V^{-1}+\mathsf{h}^{-\frac{1}{D_2}}.}
On the other hand, for $i\notin \Omega$ we may choose $r_i\in J_i$ with $$a_tu_{r_i}\Del_Q\notin \Upsilon (V,CV^{-D_2},\mathsf{h}).$$ Since $\eps<e^{-D_3s}$ there is no $\Del'\in X$ such that $\operatorname{Vol}(H\Del')\leq (e^s\mathsf{h}T^2)^{20D_1}$ and $\bd(a_tu_{r_i}\Del_Q,\Del')\leq C\eps (e^s\mathsf{h}T^2)^{10}$. Thus (1) of Lemma \ref{lem:isotropicdichotomy} must hold, hence
\eqlabel{eq:isotropiccontribution}{\begin{aligned}
    m_\bR&\big(\set{r\in J_i: a_tu_r\Del_Q\in \cK(s,\eps)}\big)\\&\leq e^{-t}m_{\bR}\big(\set{r\in[-T,T]: u_ra_tu_{r_i}\Del_Q\in \cK(s,\eps)}\big)\leq 100e^{-t}.
\end{aligned}}
Combining \eqref{eq:periodiccontribution} and \eqref{eq:isotropiccontribution}, we get
\eq{\begin{aligned}
    m_\bR\big(\set{r\in[-1,1]: a_tu_r\Del_Q\in \cK(s,\eps)}\big)&\ll \sum_{i\in \Omega} Te^{-t}+\sum_{i\notin\Omega}e^{-t}\\&\leq |\Omega|Te^{-t}+N_Te^{-t}\\&\ll V^{-1}+\mathsf{h}^{-\frac{1}{D_2}}+T^{-1}\ll \eps^{\frac{1}{D_3}}.
\end{aligned}}
\end{proof}

We now prove an avoidance estimate for $\widehat{\alpha}_{\eta,M}$.
\begin{prop}\label{prop:avoidanceMargulis}
    There exists an absolute constant $D\geq 10^4$ such that the following holds. Let $0<\delta\leq 0.01$ and let $Q$ be an indefinite ternary quadratic form of type $M\geq D$ with $\operatorname{det}Q=1$. Then we have
$$\int_{-1}^{1}\widehat{\alpha}_{\eta,M}(a_tu_r;\Del_Q)^{1+\del}\mathds{1}_{\cE_{s,\eta,M}}(a_tu_r;\Del_Q)dr\ll e^{-10s}$$
    for any $0<\eta<1$, $s\geq 1$, and $t\geq 4DMs$, where the implied constant depends only on $Q$.
\end{prop}
\begin{proof}
    We choose $D:=5D_3^2$. Let $\mathsf{h}_i=10^4e^{4s+i}$ for $i\in\bN$. 
    %Fix $s\geq 1$ and $t\geq 4DMs$. For $0<\eps<1$ and $\mathsf{h}>1$ we denote
    %$$\overline{\cK}(\eps,\mathsf{h}):=\set{g\in G: g\Delta_Q\in\cK(s,\eps) \textrm{ and }\widehat{\alpha}_{\eta,M}(g,\Delta)\leq \mathsf{h}}.$$
    Recall from Lemma \ref{lem:Q0Diophantineexpanding} that if $\widehat{\alpha}_{\eta,M}(a_tu_r,\Delta_Q)>10^4e^{\frac{t}{DM}}$ then $(a_tu_r,\Delta_Q)\notin\mathcal{E}_{1,s,\eta,M}$ for all $r\in[-1,1]$. We thus observe that if $(a_su_r,\Del_Q)\in \cE_{1,s,\eta,M}$ then either one of the followings holds:
    \begin{enumerate}
        \item $a_tu_r\Delta_Q\in \cK(s,e^{-60Ms})$ and $\widehat{\alpha}_{\eta,M}(a_tu_r,\Delta_Q)\leq 10^4e^{4s}$, or
        \item $a_tu_r\Delta_Q\in \cK(s,\mathsf{h}_i^{-5D_3})$ and $\widehat{\alpha}_{\eta,M}(a_tu_r,\Delta_Q)\leq \mathsf{h}_i$ for some $1\leq i\leq \lceil\frac{t}{DM}\rceil$.
    \end{enumerate}
    %\eqlabel{eq:cEdecomposition}{a_su_r\in \overline{\cK}(e^{-(M+2)s},10^4e^{4s})\cup \bigcup_{i=1}^{\infty}\overline{\cK}(\mathsf{h}_i^{-5D_3},\mathsf{h}_i).}
    We shall estimate
    $$\mathsf{L}_0:=m_\bR\left(\set{r\in[-1,1]: a_tu_r\Delta_Q\in \cK(s,e^{-60Ms}) \textrm{ and }\widehat{\alpha}_{\eta,M}(a_tu_r,\Delta_Q)\leq 10^4e^{4s}}\right),$$
    $$\mathsf{L}_i:=m_\bR\big(\set{r\in[-1,1]: a_tu_r\Delta_Q\in \cK(s,\mathsf{h}_i^{-5D_3})\textrm{ and }\widehat{\alpha}_{\eta,M}(a_tu_r,\Delta_Q)\leq \mathsf{h}_i}\big)$$
    for $1\leq i\leq \lceil\frac{t}{DM}\rceil$, using Proposition \ref{prop:avoidance}. We first check if the sets $\cK(s,e^{-60Ms})$ and $\cK(s,\mathsf{h}_i^{-5D_3})$ satisfy the assumptions in Proposition \ref{prop:avoidance}. For $\eps=e^{-60Ms}<e^{-Ds}$ the assumptions are satisfied since $t\geq 4DMs$, and for $\eps=\mathsf{h}_i^{-5D_3}$ the assumptions are satisfied so long as $1\leq i\leq \lceil\frac{t}{DM}\rceil$. Indeed, $\mathsf{h}_i^{-5D_3}\leq (10^4e^{4s})^{-5D_3}<e^{-D_3s}$ holds for all $i\in\bN$ and $t\geq M\log(\mathsf{h}_i^{5D_3})$ holds for $1\leq i\leq \lceil\frac{t}{DM}\rceil$.
    
    Applying Proposition \ref{prop:avoidance} we have
    $\mathsf{L}_0\ll e^{-\frac{D}{D_3}s}=e^{-5D_3s}$, and $\mathsf{L}_i\ll \mathsf{h}_i^{-5}$ for $1\leq i\leq \lceil\frac{t}{DM}\rceil$. We conclude from these estimates that
    \eq{\begin{aligned}
        \int_{-1}^{1}\widehat{\alpha}_{\eta,M}(a_tu_r;\Del_Q)^{1+\del}\mathds{1}_{\cE_{1,s,\eta,M}}(a_tu_r;\Del_Q)dr&\leq (10^4e^{4s})^{1+\del}\mathsf{L}_0+\sum_{i=1}^{\lceil\frac{t}{DM}\rceil}\mathsf{h}_i^{1+\del}\mathsf{L}_i\\&\ll e^{-(5D_3-5)s}+\sum_{i=1}^{\infty}\mathsf{h}_i^{-3}
        \\&\ll e^{-10s}+\mathsf{h}_1^{-3}\ll e^{-10s}.
    \end{aligned}}
    By the relations \eqref{eq:alphatildedualrelation} and $\cE_{1,s,\eta,M}=\set{(g,J\Delta^*):(g;\Delta)\in\cE_{2,s,\eta,M}}$, this also implies
    \eq{\begin{aligned}
    \int_{-1}^{1}&\widehat{\alpha}_{\eta,M}(a_tu_r;\Del_Q)^{1+\del}\mathds{1}_{\cE_{2,s,\eta,M}}(a_tu_r;\Del_Q)dr\\&=\int_{-1}^{1}\widehat{\alpha}_{\eta,M}(a_tu_r;J\Del_Q^*)^{1+\del}\mathds{1}_{\cE_{1,s,\eta,M}}(a_tu_r;J\Del_Q^*)dr\ll e^{-10s},
\end{aligned}}
hence
$$\int_{-1}^{1}\widehat{\alpha}_{\eta,M}(a_tu_r;\Del_Q)^{1+\del}\mathds{1}_{\cE_{s,\eta,M}}(a_tu_r;\Del_Q)dr\ll e^{-10s}.$$
\end{proof} 

We deduce Proposition~\ref{prop:avoidanceMargulis'} from Proposition~\ref{prop:avoidanceMargulis}.

\begin{proof}[Proof of Proposition~\ref{prop:avoidanceMargulis'}]
By the definition of $\widehat{\alpha}_{\eta,M}'$, we have
\eq{\begin{aligned}
    \widehat{\alpha}_{\eta,M}'(a_tu_r;\Del_Q)^{1+\delta}&\leq \max\set{\widehat{\alpha}_{\eta,M}(a_tu_r;\Delta)^{1+\delta},\alpha(a_tu_r\Delta)^{0.9(1+\delta)}}\\&\leq \widehat{\alpha}_{\eta,M}(a_tu_r;\Del_Q)^{1+\delta}+\!\!\!\!\!\!\sup_{v\in\Delta_Q\cap\cH_{\eta,M}}\!\!\!\!\!\!\|a_tu_rv\|^{-0.9(1+\delta)}+\!\!\!\!\!\!\sup_{v\in\Delta_Q^*\cap\cH_{\eta,M}}\!\!\!\!\!\!\|a_t^*u_r^*v\|^{-0.9(1+\delta)}.
\end{aligned}}
Hence, together with Proposition~\ref{prop:avoidanceMargulis} it suffices to show that 
$$\int_{-1}^{1}\sup_{v\in\Delta_Q\cap\cH_{\eta,M}}\!\!\!\!\|a_tu_rv\|^{-0.9(1+\delta)}dr\ll e^{-10s},$$
$$\int_{-1}^{1}\sup_{v\in\Delta_Q^*\cap\cH_{\eta,M}}\!\!\!\!\|a_t^*u_r^*v\|^{-0.9(1+\delta)}dr\ll e^{-10s}$$
hold for some $0<\eta<1$. 

Recall from Lemma \ref{lem:twelvelines} that there exists $0<\eta<1$ such that for any $R>10$ the set $\set{v\in (\Delta_Q\cup\Delta_Q^*)\cap \cH_{\eta,M}: R\leq \|v\|<R^2}$ is contained in at most $12$ lines. For $0\leq R<R'\leq \infty$ let us denote
    \eqlabel{eq:Pidef}{\Pi(R,R'):=\set{v\in \Del_Q\cap \cH_{\eta,M}: R< \|v\|\leq R'},}
    \eqlabel{eq:Pistardef}{\Pi^*(R,R'):=\set{v\in \Del_Q^*\cap \cH_{\eta,M}: R< \|v\|\leq R'}.}

    Applying Lemma~\ref{eq:triviallinearcontraction0} with $\lambda=0.9(1+\delta)$ for primitive vectors of each line, we get
    $$\int_{-1}^{1}\sup_{v\in \Pi(R,R^2)}\!\!\!\!\|a_tu_rv\|^{-0.9(1+\delta)}dr\leq 2400e^{-0.01 t}\!\!\!\!\sup_{v\in \Pi(R,R^2)}\!\!\!\!\|v\|^{-0.9(1+\delta)}\leq 2400e^{-0.01 t}R^{-1}$$
    for any $R>10$. It follows that for any $R_0>10$
    \eqlabel{eq:Largepicontribution}{\begin{aligned}
        \int_{-1}^{1}\sup_{v\in \Pi(R_0,\infty)}\!\!\!\!\|a_tu_rv\|^{-0.9(1+\delta)}dr&\leq \sum_{i=0}^{\infty}\int_{-1}^{1}\sup_{v\in \Pi(R_0^{2^i},R_0^{2^{i+1}})}\!\!\!\!\|a_tu_rv\|^{-0.9(1+\delta)}dr\\&\leq 2400e^{-0.01 t}\sum_{i=0}^{\infty}R_0^{-0.9\cdot2^{i}}\leq 10^4R_0^{-0.9}e^{-0.01 t}.
    \end{aligned}}
    Similarly, we also find that
    \eqlabel{eq:Largepistarcontribution}{\int_{-1}^{1}\sup_{v\in \Pi^*(R_0,\infty)}\!\!\!\!\|a_t^*u_r^*v\|^{-0.9(1+\delta)}dr\leq 10^4R_0^{-0.9}e^{-0.01 t}.}

 Since there are only finitely many vectors in $\Pi^*(0,R_0)$ and $\Pi^*(0,R_0)$, we conclude that $$\int_{-1}^{1}\sup_{v\in\Delta_Q\cap\cH_{\eta,M}}\!\!\!\!\|a_tu_rv\|^{-0.9(1+\delta)}dr\ll e^{-0.01 t}\ll e^{-10s},$$
$$\int_{-1}^{1}\sup_{v\in\Delta_Q^*\cap\cH_{\eta,M}}\!\!\!\!\|a_t^*u_r^*v\|^{-0.9(1+\delta)}dr\ll e^{-0.01 t}\ll e^{-10s}$$
 holds, where the implied constant depends only on $Q$. This completes the proof.
\end{proof}

\section{Proof of the moment estimate}
In this section, we prove the moment estimate in Theorem \ref{thm:mainthm}. We denote by $m_I$ the uniform Lebesgue probability measure on the interval $I=[-1,1]$. The next lemma allows us to write expanding translates of a unipotent orbit in the form of an iteration of random walks.
%\begin{lem}\label{lem:randomwalk}    Let $s_1,\cdots,s_m\geq 1$. Then \eq{\begin{aligned}\int f&(a_{s_1+\cdots+s_m}u_rx)dm_I(r)\\&=\int f(a_{s_m}u_{r_m}\cdots a_{s_1}u_{r_1}x)dm_I^{\otimes m}(r_1,\cdots,r_m)+O(e^{-s_1}\|f\|_\infty)\end{aligned}}for any $f\in L^\infty(X)$ and $x\in X$.\end{lem}
%\begin{proof}
 %   By induction, it suffices to show the statement for $m=2$. We have
 %   $$\int f(a_{s_2}u_{r_2}a_{s_1}u_{r_1}x)dm_I^{\otimes 2}(r_1,r_2)=\int f(a_{r_1+r_2}u_{r_1+e^{-s_1}r_2}x)dm_I^{\otimes 2}(r_1,r_2)dr.$$
%    Let $\omega_1=\frac{1}{2}\mathds{1}_{[-1,1]}$, $\omega_2=\frac{e^{s_1}}{2}\mathds{1}_{[-e^{-s_1},e^{-s_1}]}$, and $\omega=\omega_1*\omega_2$. Note that $\|\omega-\omega_1\|_{L^1}=O(e^{-s_1})$. It follows that
%    \eq{\begin{aligned}        \int f(a_{s_2}u_{r_2}a_{s_1}u_{r_1}x)dm_I^{\otimes 2}(r_1,r_2)&=\int f(a_{s_1+s_2}u_{r}x)\omega(r)dr\\&=\int f(a_{s_1+s_2}u_{r}x)\omega_1(r)dr+O(e^{-s_1}\|f\|_\infty)\\&=\int f(a_{s_1+s_2}u_{r}x)dm_I(r)+O(e^{-s_1}\|f\|_\infty).\end{aligned}}
%\end{proof}

%For non-negative unbounded functions the following variant of Lemma \ref{lem:randomwalk} will be needed.

\begin{lem}\label{lem:randomwalkgeneral}
    Let $m\in\bN$ and $s_1,\cdots,s_m\geq 1$. Then
    $$\int_{-\frac{1}{3}}^{\frac{1}{3}} f (a_{s_1+\cdots+s_m}u_r;\Del)dr\leq 2\int f(a_{s_m}u_{r_m}\cdots a_{s_1}u_{r_1};\Del)dm_I^{\otimes m}(r_1,\cdots,r_m),$$
    $$\int f(a_{s_m}u_{r_m}\cdots a_{s_1}u_{r_1};\Del)dm_I^{\otimes m}(r_1,\cdots,r_m)\leq \int_{-2}^{2} f (a_{s_1+\cdots+s_m}u_r;\Del)dr$$
    for any non-negative measurable function $f:H\times X\to [0,\infty)$ and $\Del\in X$.
\end{lem}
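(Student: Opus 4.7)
The plan is to use the commutation relation $u_r a_s = a_s u_{e^{-s}r}$ to turn the iterated product into a single translate, and then reduce both inequalities to pointwise bounds on the density of a linear combination of i.i.d.\ uniform random variables. Iterating the commutation telescopes the product:
\[
a_{s_m}u_{r_m}\cdots a_{s_1}u_{r_1} = a_{s_1+\cdots+s_m}\,u_{R(r_1,\ldots,r_m)},\qquad R(r_1,\ldots,r_m) := \sum_{i=1}^{m} c_i r_i,
\]
where $c_1=1$ and $c_i = e^{-(s_1+\cdots+s_{i-1})}$ for $i\geq 2$. If $\phi$ denotes the density on $\bR$ of the random variable $R$ when the $r_i$ are i.i.d.\ with law $m_I$, then
\[
\int f\bigl(a_{s_m}u_{r_m}\cdots a_{s_1}u_{r_1};\Del\bigr)\,dm_I^{\otimes m} = \int_{\bR} f\bigl(a_{s_1+\cdots+s_m}u_r;\Del\bigr)\,\phi(r)\,dr,
\]
so both inequalities reduce to pointwise comparisons between $\phi$ and indicator functions of suitable intervals.

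The density $\phi$ is the convolution of the uniform densities $\rho_i := (2c_i)^{-1}\mathds{1}_{[-c_i,c_i]}$. Since $s_i\geq 1$, one has $c_i \leq e^{-(i-1)}$, hence $\sum_{i\geq 1} c_i \leq e/(e-1) < 2$ and $\sum_{i\geq 2} c_i \leq 1/(e-1) < 2/3$. The first estimate gives $\operatorname{supp}\phi\subseteq[-2,2]$. Convolving with a probability density cannot increase the $L^\infty$-norm, and $\|\rho_1\|_\infty = 1/2$, so $\phi \leq 1/2$ everywhere. For a matching lower bound, I will show by induction on $k$ that the partial density $\phi_k$ of $\sum_{i=1}^{k}c_ir_i$ is identically $1/2$ on $[-a_k, a_k]$ with $a_k := 1-\sum_{i=2}^{k}c_i$: if $r\in[-a_{k+1},a_{k+1}]$, then $[r-c_{k+1},r+c_{k+1}]\subseteq[-a_k,a_k]$, so $\phi_{k+1}(r) = \int \phi_k(r-y)\rho_{k+1}(y)\,dy = (1/2)\int\rho_{k+1}\,dy =1/2$. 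Applying this with $k=m$ yields $\phi\geq 1/2$ on $[-1/3,1/3]\subseteq[-a_m,a_m]$.

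With $\phi\leq 1/2$ supported in $[-2,2]$ and $\phi\geq 1/2$ on $[-1/3,1/3]$ in hand, and using $f\geq 0$, the two inequalities follow at once: the upper inequality from $\int f\phi\,dr\leq \tfrac{1}{2}\int_{-2}^{2}f\,dr\leq \int_{-2}^{2}f\,dr$, and the lower inequality from $\int_{-1/3}^{1/3}f\,dr\leq 2\int_{-1/3}^{1/3}f\phi\,dr \leq 2\int f\phi\,dr$. The only point that genuinely uses the hypothesis $s_i\geq 1$ is the geometric series bound $\sum_{i\geq 2}c_i<2/3$, which both forces $\operatorname{supp}\phi\subseteq[-2,2]$ and guarantees a non-empty flat region $[-a_m,a_m]\supseteq[-1/3,1/3]$ on which $\phi\equiv 1/2$. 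I do not expect any essential obstacle; the argument is routine once the change of variables is made.
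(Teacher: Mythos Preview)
Your proof is correct and follows essentially the same approach as the paper: the paper also rewrites the iterated integral via the convolution density $\omega=\omega_1*\cdots*\omega_m$ with $\omega_i=(2c_i)^{-1}\mathds{1}_{[-c_i,c_i]}$ (in your notation $\rho_i$), and proves by the same induction that $\omega\equiv\tfrac12$ on $[-1/3,1/3]$, $\omega\le\tfrac12$ everywhere, and $\operatorname{supp}\omega\subseteq[-2,2]$. Your write-up is in fact slightly cleaner in the indexing of the flat region.
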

\begin{proof}
    Let $t_i=s_1+\cdots+s_{i-1}$ and $\omega_i=\frac{e^{t_i}}{2}\mathds{1}_{[-e^{-t_i},e^{-t_i}]}$ for $1\leq i\leq m$, and let $\omega=\omega_1*\cdots*\omega_m$. Notice that
    $$\int f(a_{s_m}u_{r_m}\cdots a_{s_1}u_{r_1};\Del)dm_I^{\otimes m}(r_1,\cdots,r_m)=\int f(a_{s_1+\cdots+s_m}u_{r};\Del)\omega(r)dr.$$
    By induction, one can check that for any $1\leq k\leq m$,
    $$0\leq \omega_1*\cdots*\omega_k(r)\leq \frac{1}{2} \textrm{ for all } r\in\bR,$$
    $$\omega_1*\cdots*\omega_k(r)=\frac{1}{2} \textrm{ if } |r|<1-\sum_{i=1}^{k-1}e^{-t_i},$$
    $$\omega_1*\cdots*\omega_k(r)=0 \textrm{ if } |r|>1+\sum_{i=1}^{k-1}e^{-t_i}.$$ Thus, we have $0\leq \omega(r)\leq \frac{1}{2}$ for all $r\in\bR$, $\omega(r)=\frac{1}{2}$ if $|r|<\frac{1}{3}$, and $\omega(r)=0$ if $|r|>2$. It follows that
    \eq{\int f(a_{s_1+\cdots+s_m}u_{r};\Del)\omega(r)dr\geq \frac{1}{2}\int_{-\frac{1}{3}}^{\frac{1}{3}} f(a_{s_1+\cdots+s_m}u_{r};\Del)dr,}
    \eq{\int f(a_{s_1+\cdots+s_m}u_{r};\Del)\omega(r)dr\leq \int_{-2}^{2} f(a_{s_1+\cdots+s_m}u_{r};\Del)dr,}
    completing the proof.
\end{proof}

We will also need the following elementary lemma.
\begin{lem}\label{lem:ndecomposition}
    Let $B>1$, $0<\del<\frac{1}{1+B}$, and $T>0$ be given. For any $t\geq \del^{-1}T$, we can find a finite sequence $\set{s_i}_{1\leq i\leq N}$ such that $t=s_1+\cdots+s_N$, $s_1=Bs_2$, $s_i=(1+\del)s_{i+1}$ for $2\leq i\leq N-1$, and $T\leq s_N\leq 2T$.
\end{lem}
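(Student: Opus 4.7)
The plan is to reduce the construction to finding a single integer $N$ and a single real parameter $x := s_N \in [T, 2T]$, because once $N$ and $s_N$ are fixed the whole sequence is determined by the recursions $s_i = (1+\del) s_{i+1}$ for $i \geq 2$ and $s_1 = B s_2$. Explicitly, $s_2 = (1+\del)^{N-2} x$, $s_1 = B(1+\del)^{N-2} x$, and therefore
\[
t = s_1 + s_2 + \cdots + s_N = x \, a_N, \qquad a_N := B(1+\del)^{N-2} + \frac{(1+\del)^{N-1}-1}{\del}.
\]
So the lemma is equivalent to showing that for every $t \geq \del^{-1} T$ there exists $N \geq 2$ with $t/a_N \in [T, 2T]$, i.e.\ $t \in [T a_N, 2T a_N]$.

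Next, I would verify the key recursion $a_{N+1} = (1+\del) a_N + 1$ by direct computation (the geometric-series telescoping gives this immediately). This yields the crucial bound $a_{N+1} \leq 2 a_N$, provided $a_N \geq 1/(1-\del)$. Since $a_2 = B+1 \geq 2$ and $\del < \tfrac{1}{1+B} < \tfrac12$, this lower bound holds for $N = 2$, and since $a_N$ is strictly increasing in $N$ it persists for all larger $N$. Consequently the intervals $[T a_N, 2T a_N]$ for $N \geq 2$ are consecutively overlapping, so their union is $[T a_2, \infty) = [(B+1)T, \infty)$.

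Finally, the hypothesis $\del < \tfrac{1}{1+B}$ gives $\del^{-1} T > (B+1)T = T a_2$, so any $t \geq \del^{-1} T$ lies in $[T a_N, 2T a_N]$ for some $N \geq 2$; picking this $N$ and $x = t/a_N$ produces the required sequence with $s_N \in [T, 2T]$. There is no real obstacle here: the only subtlety is confirming that the gap-free covering by the intervals $[T a_N, 2T a_N]$ reaches down to $\del^{-1} T$, which is precisely what the condition $\del(1+B) < 1$ ensures, and this is already baked into the hypotheses.
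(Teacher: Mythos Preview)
Your proof is correct and follows essentially the same approach as the paper's: both parametrize the sequence by its last term $s_N=\tau$ (your $x$), write the total $t=\tau\cdot a_N$ for a multiplicative factor (the paper's $b_k$ with $k=N-2$ is exactly your $a_N$), and then use the inequality $a_{N+1}\le 2a_N$ to ensure the intervals $[Ta_N,2Ta_N]$ overlap and cover $[(B+1)T,\infty)\supset[\del^{-1}T,\infty)$. Your derivation of $a_{N+1}\le 2a_N$ via the recursion $a_{N+1}=(1+\del)a_N+1$ is slightly more explicit than the paper's direct appeal to the closed form, but the arguments are the same in substance.
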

\begin{proof}
    Let us denote
    $$b_k=B(1+\del)^k+\displaystyle\sum_{i=1}^{k}(1+\del)^i=(\del^{-1}+B+1)(1+\del)^k-\del^{-1}$$ for $k\geq 0$. Since $B>1$ and $\del^{-1}>1+B$, we have $b_{k-1}\leq b_{k}\leq 2b_{k-1}$ for any $k\geq 1$. Thus, we can find $T\leq \tau \leq 2T$ and $k\geq 0$ such that $t=b_k\tau$. Set $N=k+2$, $s_1=B(1+\del)^k\tau$, and $s_i=(1+\del)^{N-i}\tau$ for $2\leq i\leq N$. Then $\set{s_i}_{1\leq i\leq N}$ satisfies the conditions.
\end{proof}

We are now ready to prove Theorem \ref{thm:mainthm}.
\begin{proof}[Proof of Theorem \ref{thm:mainthm}]
    Without loss of generality, it suffices to show that
    $$\sup_{t>0}\int_{-\frac{1}{3}}^{\frac{1}{3}} \widehat{\alpha}_{\eta,M}(a_tu_r;\Del_Q)^{1+\del}dr<\infty.$$
    Since the $M$-Diophantine condition implies the $D$-Diophantine condition for $M<D$, we may assume $M\geq D$. Let us set
    $$B=4DM,$$ $$\del=\frac{1}{10^5BDM(M+7)}=\frac{1}{4\cdot 10^5D^2M^2(M+7)},$$ $$\delta'=40B\delta,$$ $$T=100\del^{-2}-10\log \eta.$$ 
    Note that $400\del^{-1}\eta^{-4\del}\leq e^{\frac{\del}{2}s}$ for any $s\geq T$, and $\del<\del'<\frac{1}{400DM(M+7)}$. We now recall the superharmonic estimate for $\widehat{\alpha}_{\eta,M}'$ in Lemma~\ref{eq:trivialcontractionhypothesishat} and the subharmonic estimate for $\widetilde{\alpha}_{\del',\eta,M}$ in Proposition~\ref{prop:contractionhypothesis}. We have
    \eqlabel{eq:recalltrivialcontraction}{\begin{aligned}
        \int_{-1}^{1}\widehat{\alpha}_{\eta,M}'(a_su_r;\Del)^{1+\del}dr&\leq 400\del^{-1}e^{\del s}\alpha(\Del)^{1+\del}+e^{6s}\\&\leq e^{2\del s}\alpha(\Del)^{1+\del}+e^{6s}
    \end{aligned}}
    for any $\Del\in X$ and $s\geq T$, and
    \eqlabel{eq:recallnontrivialcontraction}{\begin{aligned}
        \int_{-1}^{1}\widetilde{\alpha}_{\del',\eta,M}(a_su_rg;\Del)dr&\leq (400\del'^{-1}\eta^{-4\del})e^{-\del' s}\widetilde{\alpha}_{\del',\eta,M}(g;\Del)+e^{9s}\\&\leq e^{-\frac{\del'}{2}s}\widetilde{\alpha}_{\del',\eta,M}(g;\Del)+e^{9s}
    \end{aligned}}
    for any $(g,\Del)\notin\cE_{s,\eta,M}$ and $s\geq T$.
    
    Given $t\ge \del^{-1}T$, we can find a finite sequence $\set{s_i}_{1\leq i\leq N}$ as in Lemma~\ref{lem:ndecomposition}. Then the sequence $\set{s_i}_{1\leq i\leq N}$ satisfies
    \eqlabel{eq:sibound}{t=s_1+\cdots+s_N,\qquad s_1=Bs_2,\qquad T\leq s_N\leq 2T,}
    \eqlabel{eq:sumratio}{\left(1-\frac{s_N}{s_i}\right)\del^{-1}s_i\leq s_{i+1}+\cdots+s_N\leq \del^{-1}s_i \textrm{ for any }2\leq i\leq N-1.}

    We consider
    $$\mathsf{Z}_t:=\int \widehat{\alpha}_{\eta,M}(a_{s_{N}}u_{r_{N}}\cdots a_{s_1}u_{r_1};\Del_Q)^{1+\del}dm_I^{\otimes N}(r_1,\cdots,r_{N}).$$
    From now on we shall prove that $\displaystyle\sup_{t>0}\mathsf{Z}_t<\infty$ since Lemma~\ref{lem:randomwalkgeneral} gives that
    \eqlabel{eq:Ztexpression}{\int_{-\frac{1}{3}}^{\frac{1}{3}} \widehat{\alpha}_{\eta,M}(a_tu_r;\Del_Q)^{1+\del}dr\leq 2\mathsf{Z}_t.}
    Let us define
    $$\overline{\Theta}_m:=\set{(r_1,\ldots,r_m)\in I^m: (a_{s_m}u_{r_m}\cdots a_{s_1}u_{r_1},\Del_Q)\in \cE_{s_{m+1},\eta,M}},$$
    $$\Theta_m:=\overline{\Theta}_m\times I^{N-m}\subseteq I^{N}, \qquad \Theta:=\bigcup_{m=1}^{N}\Theta_m$$
    for $1\leq m\leq N-1$. We shall estimate $\mathsf{Z}_t$ by $\mathsf{Z}_t\leq \mathsf{Y}_t+\sum_{m=1}^{N-1}\mathsf{E}_{t,m}$, where
    $$\mathsf{Y}_t:=\int_{I^{N}\setminus\Theta} \widehat{\alpha}_{\eta,M}(a_{s_{N}}u_{r_{N}}\cdots a_{s_1}u_{r_1};\Del_Q)^{1+\del}dm_I^{\otimes N}(r_1,\ldots,r_{N}),$$
    $$\mathsf{E}_{t,m}:=\int_{\Theta_m} \widehat{\alpha}_{\eta,M}(a_{s_{N}}u_{r_{N}}\cdots a_{s_1}u_{r_1};\Del_Q)^{1+\del}dm_I^{\otimes N}(r_1,\ldots,r_{N})$$
    for $1\leq m\leq N-1$. 
    
    We shall first estimate $\mathsf{E}_{t,m}$. Write
    $$\mathsf{E}_{t,m}=\int_{\overline{\Theta}_m}J(r_1,\ldots,r_m)dm_I^{\otimes m}(r_1,\ldots,r_m),$$
    where
    $$J(r_1,\ldots,r_m):=\int_{I^{N-m}}\widehat{\alpha}_{\eta,M}(a_{s_{N}}u_{r_{N}}\cdots a_{s_1}u_{r_1};\Del_Q)^{1+\del}dm_I^{\otimes (N-m)}(r_{m+1},\ldots,r_{N}).$$
    
    Since $s_i\geq T$ for any $1\leq i\leq N$, we may apply \eqref{eq:recalltrivialcontraction} for $J(r_1,\cdots,r_m)$ repeatedly with $s=s_N,\ldots,s_{m+1}$, and get
    \eqlabel{eq:Badestimate1}{\begin{aligned}
        J&(r_1,\ldots,r_m)\\&\leq \int_{I^{N-m}}\widehat{\alpha}_{\eta,M}'(a_{s_{N}}u_{r_{N}}\cdots a_{s_1}u_{r_1};\Del_Q)^{1+\del}dm_I^{\otimes (N-m)}(r_{m+1},\ldots,r_{N})\\&\leq \sum_{i=m+1}^{N}e^{6s_i}e^{2\del(s_{i+1}+\cdots+s_{N})}\widehat{\alpha}_{\eta,M}'(a_{s_m}u_{r_m}\cdots a_{s_1}u_{r_1}\Del_Q)^{1+\del}\\&\leq (N-m)e^{8s_{m+1}}\widehat{\alpha}_{\eta,M}'(a_{s_m}u_{r_m}\cdots a_{s_1}u_{r_1}\Del_Q)^{1+\del}\\&\leq 2\del^{-1}\log s_{m+1} e^{8s_{m+1}}\widehat{\alpha}_{\eta,M}'(a_{s_m}u_{r_m}\cdots a_{s_1}u_{r_1}\Del_Q)^{1+\del},
    \end{aligned}}
    using \eqref{eq:sumratio} in the penultimate inequality.
    
    %By Lemma \ref{lem:randomwalk} we have
    %\eq{m_I^{\otimes m}(\overline{\Theta}_m)=\int_{-1}^{1}\mathds{1}_{\cE_{s_{m+1},c,M}}(a_{t_m}u_rx)dm_I(r)+O(e^{-s_1}).}
    %Notice that $t_m\geq s_1=Bs_2\geq Bs_{m+1}$ and $s_{m+1}\geq T_0$ for any $1\leq m\leq N-1$, hence the assumptions of Proposition \ref{prop:avoidance} are satisfied. It follows that
    %\eqlabel{eq:Badestimate2}{\begin{aligned}m_I^{\otimes m}(\overline{\Theta}_m)&=\int_{-1}^{1}\mathds{1}_{\cE_{s_{m+1},c,M}}(a_{t_m}u_rx)dm_I(r)+O(e^{-s_1})\\&\ll e^{-(M-1)s_{m+1}}+O(e^{-s_1})\ll e^{-(M-1)s_{m+1}}\end{aligned}}
    %for any $m\ge 1$. On the other hand, $\alpha(y)\ll e^{4s_{m+1}}$ for any $y\in \cE_{s_{m+1},c,M}$ by the definition of $\cE_{s_{m+1},c,M}$. 

    We let $\varphi(g;\Del):=\widehat{\alpha}_{\eta,M}'(g;\Del)^{1+\del}\cdot\mathds{1}_{\cE_{s_{m+1},\eta,M}}(g,\Del)$ for $(g,\Del)\in H\times X$. By Proposition~\ref{prop:avoidanceMargulis'} and Lemma~\ref{lem:randomwalkgeneral} we have
    \eq{\begin{aligned}
        \int_{\overline{\Theta}_m}\widehat{\alpha}_{\eta,M}'&(a_{s_m}u_{r_m}\cdots a_{s_1}u_{r_1};\Del_Q)^{1+\del}dm_I^{\otimes m}(r_1,\ldots,r_m)\\&=\int_{I^m}\varphi(a_{s_m}u_{r_m}\cdots a_{s_1}u_{r_1};\Del_Q)dm_I^{\otimes m}(r_1,\ldots,r_m)\\&\leq \int_{-2}^{2}\varphi(a_{s_1+\cdots+s_m}u_{r};\Del_Q)dr\ll e^{-10s_{m+1}}.
    \end{aligned}}
    
    In combination with \eqref{eq:Badestimate1} it follows that
    \eqlabel{eq:Badestimate3}{\begin{aligned}\mathsf{E}_{t,m}&\leq 2\del^{-1}\log s_{m+1}e^{8s_{m+1}}\int_{\overline{\Theta}_m}\widehat{\alpha}_{\eta,M}'(a_{s_m}u_{r_m}\cdots a_{s_1}u_{r_1};\Del_Q)^{1+\del}dm_I^{\otimes m}(r_1,\ldots,r_m)\\&\ll \del^{-1}\log s_{m+1}e^{8s_{m+1}}e^{-10s_{m+1}}\ll e^{-s_{m+1}}
    \end{aligned}}
    for any $1\leq m\leq N-1$. Therefore, we get
    \eqlabel{eq:Ebound}{\sum_{m=1}^{N-1}\mathsf{E}_{t,m} \ll  \sum_{m=1}^{N-1}e^{-s_{m+1}}< \sum_{m=1}^{\infty} e^{-(1+\del)^mT}<\sum_{m=1}^{\infty} e^{-m\del T}<\del^{-1}T^{-1}.}

    We now estimate $\mathsf{Y}_t$ using \eqref{eq:recallnontrivialcontraction}. Observe that for any $1\leq m\leq N$ and $(r_1,\ldots,r_{N})\in I^{N}\setminus\Theta$ we have 
    $$(a_{s_m}u_{r_m}\cdots a_{s_1}u_{r_1},\Del_Q)\notin \cE_{s_{m+1},\eta,M}$$ from the construction of $\Theta$. By \eqref{eq:identitycontained} we also have $(\operatorname{id},\Del_Q)\notin \cE_{s_1,\eta,M}$.
    
    Applying \eqref{eq:recallnontrivialcontraction} repeatedly with $s=s_N,\ldots,s_1$, we get
    \eqlabel{eq:Goodestimate1}{\begin{aligned}
        \mathsf{Y}_t&<\int_{I^{N}\setminus\Theta} \widehat{\alpha}_{\eta,M}(a_{s_{N}}u_{r_{N}}\cdots a_{s_1}u_{r_1};\Del_Q)^{1+\del'}dm_I^{\otimes N}(r_1,\ldots,r_{N})\\&\leq\int_{I^{N}\setminus\Theta} \widetilde{\alpha}_{\del',\eta,M}(a_{s_{N}}u_{r_{N}}\cdots a_{s_1}u_{r_1};\Del_Q)dm_I^{\otimes N}(r_1,\ldots,r_{N})\\&\leq e^{-\frac{\del'}{2}t}\widetilde{\alpha}_{\del',\eta,M}(\operatorname{id};\Del_Q)+\sum_{m=1}^{N}e^{9s_m-\frac{\del'}{2}(s_{m+1}+\cdots+s_{N})}.
    \end{aligned}}
    Let $N'$ be the largest integer such that $s_{N'}\geq 2s_N$. Note that $N-N'< \del^{-1}$, since $s_{N'}=(1+\del)^{N-N'}s_N$. If $2\leq m\leq N'$, then $s_{m+1}+\cdots+s_{N}\geq \left(1-\frac{s_N}{s_{N'}}\right)\del^{-1}s_m\geq\frac{\del^{-1}s_m}{2}=\frac{20}{\del'}s_m$. Moreover, for $m=1$ we have
    $$\frac{\del'}{2}(s_2+\cdots+s_N)\geq \frac{\del^{-1}\del' s_2}{4}=10Bs_2=10s_1,$$
    hence
    \eqlabel{eq:exponentialsum1}{\sum_{m=1}^{N'}e^{9s_m-\frac{\del'}{2}(s_{m+1}+\cdots+s_{N})}\leq \sum_{m=1}^{N'}e^{-s_m}< \sum_{m=1}^{\infty} e^{-(1+\del)^mT}<\del^{-1}T^{-1}.}
    It is also easy to see
    \eqlabel{eq:exponentialsum2}{\displaystyle\sum_{m=N'+1}^{N}e^{9s_m-\frac{\del'}{2}(s_{m+1}+\cdots+s_{N})}\leq (N-N')e^{18s_N}< \del^{-1}e^{36T}.}
    Combining \eqref{eq:Goodestimate1}, \eqref{eq:exponentialsum1}, and \eqref{eq:exponentialsum2}, we have
    $$\mathsf{Y}_t< e^{-\frac{\del'}{2}t}\widetilde{\alpha}_{\del',\eta,M}(\operatorname{id};\Del_Q)+\del^{-1}e^{36T}+\del^{-1}T^{-1}$$
    for any large $t$. In combination with \eqref{eq:Ebound}, it follows that
    $$\mathsf{Z}_t\leq \mathsf{Y}_t+\sum_{m=1}^{N}\mathsf{E}_{t,m}\leq e^{-\frac{\del'}{2}t}\widetilde{\alpha}_{\del',\eta,M}(\operatorname{id};\Del_Q)+\del^{-1}(e^{36T}+2T^{-1})$$
    for any large $t$, hence $\displaystyle\sup_{t>0}\mathsf{Z}_t<\infty$. By \eqref{eq:Ztexpression} this completes the proof.
\end{proof}

\section{Equidistribution for unbounded test functions}

\subsection{Contribution of non-isotropic quasi-null vectors}
In this subsection, we show that the contribution of non-isotropic quasi-null vectors is small.

\begin{prop}\label{prop:quasinullcontribution}
Let $Q$ be an indefinite quadratic form of Diophantine type $M$ with $\operatorname{det}Q=1$. Then there exists $0<\eta<1$ such that 
     $$\lim_{t\to\infty}\int_{-1}^{1}\sup_{v\in \Del_Q\cap (\cH_{\eta,M}\setminus\cH)}\!\!\!\!\!\!\!\!\|a_tu_rv\|^{-1}+\!\!\!\!\!\!\sup_{v\in \Del_Q^{*}\cap (\cH_{\eta,M}\setminus\cH)}\!\!\!\!\!\!\!\!\|a_t^*u_r^*v\|^{-1}dr=0.$$
\end{prop}
\begin{proof}
    For $0\leq R<R'\leq \infty$ let us denote
    $$\Pi_0(R,R'):=\set{v\in \Del_Q\cap (\cH_{\eta,M}\setminus\cH): R< \|v\|\leq R'},$$
    $$\Pi_0^*(R,R'):=\set{v\in \Del_Q^*\cap (\cH_{\eta,M}\setminus\cH): R< \|v\|\leq R'}.$$

    By Lemma~\ref{lem:twelvelines}, for any $R>10$ the points in $\Pi_0(R,R^2)$ are contained in at most $12$ lines. Applying Lemma~\ref{eq:triviallinearcontraction0} with $\lambda=1$ for primitive vectors of each line, we get
    $$\int_{-1}^{1}\sup_{v\in \Pi_0(R,R^2)}\!\!\!\!\!\!\|a_tu_rv\|^{-1}dr\leq 2400\sup_{v\in \Pi_0(R,R^2)}\!\!\!\!\|v\|^{-1}\leq 2400R^{-1}$$
    for any $R>10$. It follows that for any $R_0>10$
    \eqlabel{eq:Largepi0contribution}{\begin{aligned}
        \int_{-1}^{1}\sup_{v\in \Pi_0(R_0,\infty)}\!\!\!\!\!\!\|a_tu_rv\|^{-1}dr&\leq \sum_{i=0}^{\infty}\int_{-1}^{1}\sup_{v\in \Pi_0(R_0^{2^i},R_0^{2^{i+1}})}\!\!\!\!\!\!\!\!\|a_tu_rv\|^{-1}dr\\&\leq 2400\sum_{i=0}^{\infty}R_0^{-2^{i}}\leq 10^4R_0^{-1}.
    \end{aligned}}
    Similarly, we also find that
    \eqlabel{eq:Largepistarcontribution0}{\int_{-1}^{1}\sup_{v\in \Pi_0^*(R_0,\infty)}\!\!\!\!\!\!\|a_t^*u_r^*v\|^{-1}dr\leq 10^4R_0^{-1}.}
    
    On the other hand, note that for given $R_0$ there are only finitely many points in $\Pi_0(0,R_0)$ and $\Pi_0^*(0,R_0)$. For each $v\notin \cH$ we may view the contraction for $\phi_\del$ in Proposition~\ref{eq:linearcontractionhypothesis} and obtain
    $$\limsup_{t\to\infty}\int_{-1}^{1}\|a_tu_rv\|^{-1}dr\leq \limsup_{t\to\infty}\int_{-1}^{1}\phi_{0.01}(a_tu_rv)dr\leq \limsup_{t\to\infty}10^4e^{-0.01t}\phi_{0.01}(v)=0,$$
    as $\phi_{0.01}(v)<\infty$ for $v\notin \cH$. Thus for any $R_0>0$ we deduce that
    \eqlabel{eq:Smallpicontribution}{\begin{aligned}
        \limsup_{t\to\infty}&\int_{-1}^{1}\sup_{v\in \Pi_0(0,R_0)}\!\!\!\!\|a_tu_rv\|^{-1}dr+\int_{-1}^{1}\sup_{v\in \Pi_0^*(0,R_0)}\!\!\!\!\|a_t^*u_r^*v\|^{-1}dr\\&\leq \limsup_{t\to\infty}\sum_{v\in \Pi_0(0,R_0)}\int_{-1}^{1}\|a_tu_rv\|^{-1}dr+\sum_{v\in \Pi_0^*(0,R_0)}\int_{-1}^{1}\|a_t^*u_r^*v\|^{-1}dr=0.
    \end{aligned}}
    Combining \eqref{eq:Largepi0contribution}, \eqref{eq:Largepistarcontribution0}, and \eqref{eq:Smallpicontribution}, and taking $R_0\to\infty$, we complete the proof.
\end{proof}

\subsection{Proof of Theorem~\ref{prop:unboundedRatner}}
We now derive Theorem~\ref{prop:unboundedRatner} from Theorem~\ref{thm:mainthm} combining with Proposition~\ref{prop:quasinullcontribution}. As explained in \S 2.2.5, Theorem~\ref{prop:unboundedRatner} in turn implies Theorem~\ref{thm:quantitativeoppenheim0}. We assume without loss of generality that $f\geq 0$.

We note that the moment estimate \eqref{eq:unipotenthighermoment} in Theorem~\ref{thm:mainthm} implies the analogous statement for $a_tK$-orbits under the same assumptions:
\eqlabel{eq:compacthighermoment'}{\sup_{t>0}\int_{K}\widehat{\alpha}_{\eta,M}(a_tk;\Del_Q)^{1+\del}dm_K(k)<\infty.}
To see this, let $P$ denote the parabolic subgroup of $H$ consisting of all elements $h\in H$ such that $a_tha_{-t}$ remains bounded for $t>0$. Note that $\Gamma\cap K$ is finite, and there is a neighborhood of the identity $\cO$ in $K$ such that $K$ is covered by the sets $\cO\gamma$, where $\gamma\in \Gamma\cap K$, and the following holds: For each $\gamma\in \Gamma\cap K$ there exist local diffeomorphisms $p_{\gamma}:\cO\to P$ and $r_{\gamma}:\cO\to \mathbb{R}$ such that $k=p_{\gamma}(k)u_{r_{\gamma}(k)}\gamma$ for all $k\in \cO$. Under this setup, for each $\gamma\in \Gamma\cap K$, we have:
\eq{\begin{aligned}
    \sup_{t>0}\int_{\cO\gamma}\widehat{\alpha}_{\eta,M}(a_tk;\Del_Q)^{1+\del}dm_K(k)&=\sup_{t>0}\int_{\cO}\widehat{\alpha}_{\eta,M}\big((a_tp_{\gamma}(k)a_{-t})a_tu_{r_{\gamma}(k)}\gamma;\Del_Q\big)^{1+\del}dm_K(k)\\&\ll \sup_{t>0}\int_{\cO}\widehat{\alpha}_{\eta,M}(a_tu_{r_{\gamma}(k)};\Del_Q)^{1+\del}dm_K(k)
\end{aligned}}
using the log-Lipschitz property, as $a_tp_{\gamma}(k)a_{-t}$ is bounded for $t>0$. Since the map $r_{\gamma}:K\to \mathbb{R}$ is a diffeomorphism for each $\gamma\in \Gamma\cap K$, we can deduce from the moment estimate \eqref{eq:unipotenthighermoment} that
$$\sup_{t>0}\int_{\cO}\widehat{\alpha}_{\eta,M}(a_tu_{r_{\gamma}(k)};\Del_Q)^{1+\del}dm_K(k)<\infty.$$
This establishes \eqref{eq:compacthighermoment'}.

By a similar argument, Proposition~\ref{prop:quasinullcontribution} implies that
\eqlabel{eq:compactquasinullcontribution}{\lim_{t\to\infty}\int_{K}\sup_{v\in \Del_Q\cap (\cH_{\eta,M}\setminus\cH)}\!\!\!\!\!\!\!\!\|a_tkv\|^{-1}+\!\!\!\!\!\!\sup_{v\in \Del_Q^{*}\cap (\cH_{\eta,M}\setminus\cH)}\!\!\!\!\!\!\!\!\|a_t^*k^*v\|^{-1}dm_K(k)=0.}

\textit{Lower bound.} We first prove that
$$\displaystyle\liminf_{t\to\infty}\int_{K}\widehat{f}(a_tk;\Del_Q)\nu(k)dk\geq\int_{\bR^3} f(v)dv\int_K \nu(k)dk.$$ Let $\eps>0$. To show the lower bound, we take an approximation $f_-\in C_0^{\infty}(\bR^3\setminus\set{0})$ satisfying 
$$0\leq f_-\leq f, \qquad \int_{\bR^3}f_-(v)dv\geq (1-\eps)\int_{\bR^3}f(v)dv.$$ Then Siegel integral formula gives that
$$\int_X \widetilde{f_-}(\Del)dm_X(\Del)\geq (1-\eps)\int_{\bR^3}f(v)dv.$$
We may choose $S>1$ such that $f_-(v)=0$ for any $\|v\|\geq S$.

For $R>1$ let $X_{>R}:=\set{\Del\in X: \alpha(\Del)>R}$. Choose a continuous nonnegative function $h_R$ on $X$ satisfying $$\mathds{1}_{X_{>R+1}}\leq h_R\leq \mathds{1}_{X_{>R}}.$$
It is immediate to see that $\displaystyle\lim_{R\to\infty}\int_X h_Rdm_X=0$.

We now choose sufficiently large $R$ so that
$$\int_X \widetilde{f_-}(\Del)(1-h_R)(\Del)dm_X(\Del)\geq (1-2\eps)\int_{\bR^3}f(v)dv.$$
According to \cite{DM93}, for any indefinite irrational quadratic form $Q$ and a continuous bounded function $F$ on $X$ it holds that
$$\lim_{t\to\infty}\int_K F(a_tk\Del_Q)\nu(k)dk=\int_XFdm_X\int_K\nu(k)dk.$$
Since the function $\widetilde{f_-}(1-h_R)$ is continuous and bounded, we deduce that
\eqlabel{eq:applyingRatner}{\begin{aligned}
    \lim_{t\to\infty}\int_{K}\widetilde{f_-}&(a_tk\Del_Q)\big(1-h_R(a_tk\Del_Q)\big)\nu(k)dk\\&=\int_X \widetilde{f_-}(\Del)\big(1-h_R(\Del)\big)dm_X(\Del)\int_K\nu(k)dk\\&\geq (1-2\eps)\int_{\bR^3}f(v)dv\int_K\nu(k)dk.
\end{aligned}}

On the other hand, by a similar argument to \cite[Lemma 2]{Sch68}, there exists a constant $c'=c'(f)$ such that for any $g\in H$ and $\Del\in X$ we have
\eq{\begin{aligned}
    \left(\widetilde{f_-}(g\Del)-\widehat{f_-}(g;\Del)\right)&\big(1-h_R(g\Del)\big)=\left(\sum_{v\in\Del\cap \cH}f_-(gv)\right)\big(1-h_R(g\Del)\big)\\&\leq c'\!\!\sum_{v\in \Del_{\operatorname{prim}}\cap\cH}\!\!\!\!\|gv\|^{-1}\mathds{1}_{B(S)\setminus B(R^{-1})}(gv)\\&\quad+c'\!\!\sum_{v\in \Del_{\operatorname{prim}}^*\cap\cH}\!\!\!\!\|g^*v\|^{-1}\mathds{1}_{B(S)\setminus B(R^{-1})}(g^*v).
\end{aligned}}
Recall that there are at most eight elements in $(\Del_{\operatorname{prim}}\cup\Del_{\operatorname{prim}}^*)$ for $\Del=\Del_Q$ if $Q$ is irrational. Moreover, since the set $B(S)\setminus B(R^{-1})$ is bounded away from zero, we have
$$\lim_{t\to\infty}\int_{K}\|a_tkv\|^{-1}\mathds{1}_{B(S)\setminus B(R^{-1})}(a_tkv)dk=0,$$
$$\lim_{t\to\infty}\int_{K}\|a_t^*k^*v\|^{-1}\mathds{1}_{B(S)\setminus B(R^{-1})}(a_t^*k^*v)dk=0$$
for each $v\in \cH$, by the dominated convergence theorem. It follows that
\eqlabel{eq:f-zerocontribution}{\lim_{t\to\infty}\int_K \left(\widetilde{f_-}(a_tk\Del_Q)-\widehat{f_-}(a_tk;\Del_Q)\right)\big(1-h_R(a_tk\Del_Q)\big)\nu(k) dk=0.}
Combining \eqref{eq:applyingRatner} and \eqref{eq:f-zerocontribution}, we get
\eq{\begin{aligned}
    \lim_{t\to\infty} \int_K\widehat{f_-}&(a_tk;\Del_Q)\big(1-h_R(a_tk\Del_Q)\big)\nu(k)dk\\&=\lim_{t\to\infty} \int_K\widetilde{f_-}(a_tk;\Del_Q)\big(1-h_R(a_tk\Del_Q)\big)\nu(k)dk\\&\geq (1-2\eps)\int_{\bR^3}f(v)dv\int_K\nu(k)dk
\end{aligned}}
for any $\eps>0$. Therefore, taking $\eps\to0$ we conclude that
$$\liminf_{t\to\infty}\int_K \widehat{f}(a_tk;\Del_Q)\nu(k) dk\geq \liminf_{t\to\infty}\int_K \widehat{f_-}(a_tk;\Del_Q)\nu(k) dk\geq \int_{\bR^3}f(v)dv\int_K\nu(k)dk.$$

\textit{Upper bound.} We now prove that
$$\displaystyle\limsup_{t\to\infty}\int_{K}\widehat{f}(a_tk;\Del_Q)\nu(k)dk\leq\int_{\bR^3}f(v)dv\int_K\nu(k)dk.$$ Let $\eps>0$. To show the upper bound, we take an approximation $f_+\in C_0^{\infty}(\bR^3\setminus\set{0})$ satisfying 
$$0\leq f\leq f_+, \qquad \int_{\bR^3}f_+dm_{\bR^3}\leq (1+\eps)\int_{\bR^3}f(v)dv.$$ Then Siegel integral formula gives that
$$\int_X \widetilde{f_+}(\Del)dm_X(\Del)\leq (1+\eps)\int_{\bR^3}f(v)dv.$$
Then we have
\eqlabel{eq:f+Ratner}{\begin{aligned}
    \lim_{t\to\infty}\int_{K}\widehat{f_+}(a_tk;\Del_Q)&\big(1-h_R(a_tk\Del_Q)\big)\nu(k)dk\\&\leq\lim_{t\to\infty}\int_{K}\widetilde{f_+}(a_tk\Del_Q)\big(1-h_R(a_tk\Del_Q)\big)\nu(k)dk\\&\leq\int_X \widetilde{f_+}(\Del)\big(1-h_R(\Del)\big)dm_X(\Del)\int_K\nu(k)dk\\&\leq (1+\eps)\int_{\bR^3}f(v)dv\int_K\nu(k)dk
\end{aligned}}
for any $R>1$. We shall now estimate $\int_K \widehat{f_+}(a_tk;\Del_Q)h_R(a_tk\Del_Q)\nu(k)dk$. Note that
\eqlabel{eq:f+split}{\begin{aligned}
    \widehat{f_+}(a_tk;\Del_Q)&\leq  \widehat{(f_+)}_{\eta,M}(a_tk;\Del_Q)+\|f\|_{\infty}\left(\sup_{v\in \Del_Q\cap (\cH_{\eta,M}\setminus\cH)}\!\!\!\!\!\!\!\!\|a_tkv\|^{-1}+\!\!\!\!\sup_{v\in \Del_Q^{*}\cap (\cH_{\eta,M}\setminus\cH)}\!\!\!\!\!\!\!\!\|a_t^*kv\|^{-1}\right)\\&\leq c\widehat{\alpha}_{\eta,M}(a_tk;\Del_Q)+\|f\|_{\infty}\left(\sup_{v\in \Del_Q\cap (\cH_{\eta,M}\setminus\cH)}\!\!\!\!\!\!\!\!\|a_tkv\|^{-1}+\!\!\!\!\sup_{v\in \Del_Q^{*}\cap (\cH_{\eta,M}\setminus\cH)}\!\!\!\!\!\!\!\!\|a_t^*kv\|^{-1}\right)
\end{aligned}}
holds by the Lipschitz principle in Lemma~\ref{lem:Lipschitzprinciple}. 

For the first term, let us write
\eqlabel{eq:auxillaryalpha}{\begin{aligned}
    \int_K &\widehat{\alpha}_{\eta,M}(a_tk;\Del_Q)h_R(a_tk\Del_Q)\nu(k)dk\\&\leq \left(\sup_{k\in K}|\nu(k)|\right)\int_K \big(\widehat{\alpha}_{\eta,M}(a_tk;\Del_Q)+\alpha(a_tk\Del_Q)^{\frac{1}{2}}\big) h_R(a_tk\Del_Q)dk.
\end{aligned}}
Observe that $\big(\widehat{\alpha}_{\eta,M}(a_tk;\Del_Q)+\alpha(a_tk\Del_Q)^{\frac{1}{2}}\big) h_R(a_tk\Del_Q)$ is either zero or at least $R^{\frac{1}{2}}$. Thus \eqref{eq:auxillaryalpha} is bounded above by
\eqlabel{eq:genericupperbound}{\begin{aligned}
    &\leq \left(\sup_{k\in K}|\nu(k)|\right)\int_K\big(\widehat{\alpha}_{\eta,M}(a_tk;\Del_Q)+\alpha(a_tk\Del_Q)^{\frac{1}{2}}\big)^{1+\del}R^{-\frac{\del}{2}}dk\\&\ll \left(\sup_{k\in K}|\nu(k)|\right)R^{-\frac{\del}{2}}\int_K\big(\widehat{\alpha}_{\eta,M}(a_tk;\Del_Q)^{1+\del}+\alpha(a_tk\Del_Q)^{\frac{1+\del}{2}})dk\\&\ll \left(\sup_{k\in K}|\nu(k)|\right)R^{-\frac{\del}{2}}.
\end{aligned}}
In the last line, the moment estimates \eqref{eq:compacthighermoment'} and \eqref{MomentEMM} are used. For the remaining terms in \eqref{eq:f+split}, we can deduce from \eqref{eq:compactquasinullcontribution} (contribution for quasi-null vectors) that
\eqlabel{eq:quasinullcontribution}{\lim_{t\to\infty}\int_{K}\sup_{v\in \Del_Q\cap (\cH_{\eta,M}\setminus\cH)}\!\!\!\!\!\!\|a_tkv\|^{-1}+\!\!\!\!\sup_{v\in \Del_Q^{*}\cap (\cH_{\eta,M}\setminus\cH)}\!\!\!\!\!\!\|a_t^*kv\|^{-1}dk=0.}

Combining \eqref{eq:f+Ratner}, \eqref{eq:f+split}, \eqref{eq:genericupperbound}, and \eqref{eq:quasinullcontribution} altogether, we get
$$\limsup_{t\to\infty}\int_{K}\widehat{f_+}(a_tk;\Del_Q)\nu(k)dk\leq (1+\eps)\int_{\bR^3}f(v)dv\int_K\nu(k)dk+O(R^{-\frac{\del}{2}}).$$
Taking $R\to\infty$ and $\eps\to0$, we conclude that
$$\limsup_{t\to\infty}\int_{K}\widehat{f}(a_tk;\Del_Q)\nu(k)dk\leq \int_{\bR^3}f(v)dv\int_K\nu(k)dk.$$

\subsection{Proof of Theorem~\ref{thm:quantitativeoppenheim}}\label{subsec:exceptionalsubspaces}
To deduce the full count in Theorem~\ref{thm:quantitativeoppenheim} from the modified count in Theorem \ref{thm:quantitativeoppenheim0}, it suffices to count the number of points on isotropic rational lines and planes. 

\begin{lem}\label{lem:degenerateplane}
    For a ternary quadratic form $Q$ and a plane $0\in P\subset \mathbb{R}^3$ the restriction $Q|_P$ is the square of a linear form if and only if the orthogonal line to $P$ is isotropic for the dual form $Q^*$.
\end{lem}
\begin{proof}
    We first claim that $Q(\bv_1)Q(\bv_2)-Q(\bv_1,\bv_2)^2=Q^*(\bv_1\times\bv_2)$ holds for any $\bv_1,\bv_2\in\mathbb{R}^3$. The case $\bv_1\times\bv_2=0$ is trivial. Otherwise we may choose $g\in G$ so that $g\bv_1=\be_1$ and $g\bv_2=\be_2$. Then $g^*(\bv_1\times \bv_2)=(g\bv_1\times g\bv_2)=\be_3$ follows. Let $Q'=Q\circ g^{-1}$. By a straightforward calculation we can check that $Q'(\be_1)Q'(\be_1)-Q'(\be_1,\be_2)^2=(Q')^*(\be_3)$ holds for any indefinite ternary quadratic form $Q'$ with $\operatorname{det}(Q')=1$. We thus have
\begin{equation}\label{eq:outerproductduality}
    \begin{aligned}
        Q(\bv_1)Q(\bv_2)-Q(\bv_1,\bv_2)^2&=Q'(\be_1)Q'(\be_2)-Q'(\be_1,\be_2)^2\\&=(Q')^*(\be_3)=(Q')^*\big(g^*(\bv_1\times\bv_2)\big)=Q^*(\bv_1\times\bv_2).
    \end{aligned}
\end{equation}

Let us choose a basis $\set{\bv_1,\bv_2}$ of $P$. Then for any $\bv=a_1\bv_1+a_2\bv_2\in P$ with $a_1,a_2\in\mathbb{R}$ we have
$$Q(\bv)=Q(a_1\bv_1+a_2\bv_2)=Q(\bv_1)a_1^2+2Q(\bv_1,\bv_2)a_1a_2+Q(\bv_2)a_2^2,$$
hence $Q|_P$ is the square of a linear form if and only if $Q(\bv_1)Q(\bv_2)-Q(\bv_1,\bv_2)^2=0$. In view \eqref{eq:outerproductduality} this completes the proof.
\end{proof}

For a given indefinite ternary quadratic form $Q$, let us denote 
$$\cL(Q):=\set{\bm\in\bZ^3_{\operatorname{prim}}: Q(\bm)=0},$$
$$\cP(Q):=\set{\bm\in\bZ^3_{\operatorname{prim}}: Q^*(\bm)=0}.$$

Each $\bm\in \cL(Q)$ is on an isotropic rational line through the origin, and each $\bm\in \cP(Q)$ is orthogonal to an isotropic rational plane through the origin. Since there are two primitive vectors on a rational line, Lemma \ref{lem:fourisotropic} implies that $|\cL(Q)|$ and $|\cP(Q)|$ are at most $8$ if $Q$ is irrational.

The total number of the points of norm $<T$ on the isotropic rational lines is asymptotically $\mathsf{L}_QT$, where
$$\mathsf{L}_Q:=\sum_{\bm\in\cL(Q)}\frac{1}{\|\bm\|}.$$
Since the value of $Q$ is always zero on isotropic rational lines, the contribution of the points on isotropic rational lines is $\mathsf{L}_Q\mathds{1}_{(a,b)}(0)$.

Now we estimate the contribution of points on isotropic rational planes. Let $\bm\in \cP(Q)$. We choose an integral basis $\set{\bn_1=\bn_1(\bm),\bn_2=\bn_2(\bm)}$ of the plane orthogonal to $\bm$. Then $\bn_1\times \bn_2$ is a scalar multiple of $\bm$.

Using the identity \eqref{eq:outerproductduality}, we find that
$$Q(\bn_1)Q(\bn_2)-Q(\bn_1,\bn_2)^2=Q^*(\bn_1\times\bn_2)=0,$$
hence $\frac{Q(\bn_1,\bn_2)}{Q(\bn_1)}=\frac{Q(\bn_2
)}{Q(\bn_1,\bn_2)}$ holds. Note that $Q(\bn_1)$ and $Q(\bn_2)$ are positive as $Q$ is of signature $(2,1)$.

%If this ratio is rational, then the restriction of $Q$ to the plane spanned by $\bn_1$ and $\bn_2$ is also rational. 
%Note that $\frac{\sqrt{Q(\bn_2)}}{\sqrt{Q(\bn_1)}}=\frac{Q(\bn_1,\bn_2)}{Q(\bn_1)}=\frac{Q(\bn_2)}{Q(\bn_1,\bn_2)}$ is irrational as $Q$ is irrational, and $Q(\bn_1)$ and $Q(\bn_2)$ are positive as $Q$ is of signature $(2,1)$.

We shall count the number of points $v$ in this plane that satisfy
$\|v\|<T$ and $a<Q(v)<b$. Write $v=k_1\bn_1+k_2\bn_2$ with $k_1,k_2\in\bZ$. Then
\eq{\begin{aligned}
Q(v)&=Q(k_1\bn_1+k_2\bn_2)\\&=Q(\bn_1)k_1^2+2Q(\bn_1,\bn_2)k_1k_2+Q(\bn_2)k_2^2\\&=Q(\bn_1)\left(k_1+\frac{Q(\bn_1,\bn_2)}{Q(\bn_1)}k_2\right)^2+\frac{Q(\bn_1)Q(\bn_2)-Q(\bn_1,\bn_2)^2}{Q(\bn_1)}k_2^2\\&=Q(\bn_1)\left(k_1+\frac{Q(\bn_1,\bn_2)}{Q(\bn_1)}k_2\right)^2=\left(\sqrt{Q(\bn_1)}k_1+\sqrt{Q(\bn_2)}k_2\right)^2.
\end{aligned}}
Hence, the system of the inequalities $\|v\|<T$ and $a<Q(v)<b$ is written
\begin{equation}\label{eq:systemofinequalities}
    \begin{aligned}
        &\quad\quad\|k_1\bn_1+k_2\bn_2\|<T,\\
        \sqrt{a^+}<&\left|\sqrt{Q(\bn_1)}k_1+\sqrt{Q(\bn_2)}k_2\right|<\sqrt{b^+}.
    \end{aligned}
\end{equation}
One can calculate that the area of this region is asymptotically
\eqlabel{eq:volumecalculation}{\frac{2(\sqrt{b^+}-\sqrt{a^+})}{\|\sqrt{Q(\bn_1)}\bn_2-\sqrt{Q(\bn_2)}\bn_1\|}T \quad\textrm{ as }\; T\to\infty,}
where $a^+:=\max(a,0)$ and $b^+:=\max(b,0)$.

If $\frac{\sqrt{Q(\bn_2)}}{\sqrt{Q(\bn_1)}}=\frac{Q(\bn_1,\bn_2)}{Q(\bn_1)}$ is irrational, the number of integral points $(k_1,k_2)$ in the region is asymptotically the same as \eqref{eq:volumecalculation}, hence linear in $T$. If $\frac{\sqrt{Q(\bn_2)}}{\sqrt{Q(\bn_1)}}=\frac{Q(\bn_1,\bn_2)}{Q(\bn_1)}$ is rational, then for the map $(k_1,k_2)\mapsto \sqrt{Q(\bn_1)}k_1+\sqrt{Q(\bn_2)}k_2$, the preimage of the interval $(\sqrt{a^+},\sqrt{b^+})$ consists of finitely many parallel rational lines in $\mathbb{Z}^2$. Consequently, the number of solutions to the system of inequalities \eqref{eq:systemofinequalities} remains linear in $T$ in this case as well.

%Therefore, the total number of points on isotropic rational planes is asymptotically $\mathsf{P}_Q(\sqrt{b^+}-\sqrt{a^+})T$ as $T\to\infty$, where
%$$\mathsf{P}_Q:=\sum_{\bm\in\cP(Q)}\frac{1}{\left\|\sqrt{Q\big(\bn_1(\bm)\big)}\bn_2(\bm)-\sqrt{Q\big(\bn_2(\bm)\big)}\bn_1(\bm)\right\|}.$$

To summarize, we demonstrated that for any $a<b$ the number of points $v\in\mathbb{Z}^3$ on isotropic lines and planes satisfying $\|v\|<T$ and $a<Q(v)<b$ is asymptotically $\mathsf{I}_{Q}(a,b)T$ as $T\to\infty$, where the constant $\mathsf{I}_{Q}(a,b)\geq0$ depends only on $a,b$, and $Q$. In combination with Theorem \ref{thm:quantitativeoppenheim0}, we conclude that
\eq{\begin{aligned}
    \lim_{T\to\infty}\frac{N_Q(a,b,T)}{T}=\lim_{T\to\infty}\frac{\widetilde{N}_Q(a,b,T)+\mathsf{I}_{Q}(a,b)T}{T}=\mathsf{C}_Q(b-a)+\mathsf{I}_{Q}(a,b).
\end{aligned}}

%If $Q$ is of signature $(1,2)$ then $Q(\bn_1)$ and $Q(\bn_2)$ are negative, so by a similar argument the total number of points on isotropic rational planes is asymptotically $\mathsf{P}_Q(\sqrt{a^-}-\sqrt{b^-})T$, where
%$$\mathsf{P}_Q:=\sum_{\bm\in\cP(Q)}\frac{1}{\left\|\sqrt{-Q\big(\bn_1(\bm)\big)}\bn_2(\bm)-\sqrt{-Q\big(\bn_2(\bm)\big)}\bn_1(\bm)\right\|}.$$

%\bibliographystyle{amsalpha}
%\bibliography{uribib}
\def\cprime{$'$} \def\cprime{$'$} \def\cprime{$'$}
\providecommand{\bysame}{\leavevmode\hbox to3em{\hrulefill}\thinspace}
\providecommand{\MR}{\relax\ifhmode\unskip\space\fi MR }
% \MRhref is called by the amsart/book/proc definition of \MR.
\providecommand{\MRhref}[2]{%
  \href{http://www.ams.org/mathscinet-getitem?mr=#1}{#2}
}

\end{document}